\author{Soichiro Fujii}
\date{\today}
\thanks{The author is supported by JST ERATO HASUO Metamathematics for Systems Design Project (No.~JPMJER1603) and the JSPS-INRIA Bilateral Joint Research Project ``CRECOGI''.
}
\address{Research Institute for Mathematical Sciences, Kyoto University\\
 Kyoto 606-8502, Japan}
\title {A unified framework\\ for notions of algebraic theory}
\keywords{Algebraic theories, clones, operads, double limits}
\tikzset{2cell/.style={-implies,double,double equal sign distance,shorten 
>=2pt, shorten <=3pt}}
\tikzset{2cellshort/.style={-implies,double,double equal sign distance,shorten 
>=4pt, shorten <=5pt}}
\tikzset{2cellr/.style={implies-,double,double equal sign distance,shorten 
>=3pt, shorten <=2pt}}
\tikzset{3cell/.style={-implies,double,double distance=2.5pt,shorten >=2pt, 
shorten <=3pt}}
\tikzset{labelsize/.style={font=\scriptsize}}
\tikzset{string/.style={very thick}}
\tikzset{
  pto/.style={->,postaction={decorate},
    decoration={
        markings,
        mark=at position 0.5 with {\arrow{|}}}
  },
}
\newcommand{\tzsquare}[3]{
\draw[2cell] (#1,#2)++(0,0.25) to node[auto,labelsize]{#3} ++(0,-0.5);}
\newcommand{\tzsquareup}[3]{
\draw[2cell] (#1,#2)++(0,-0.25) to node[auto,swap,labelsize]{#3} ++(0,0.5);}
\newcommand{\tzsquareupswap}[3]{
\draw[2cell] (#1,#2)++(0,-0.25) to node[auto,labelsize]{#3} ++(0,0.5);}
\mathchardef\mhyphen="2D
\newcommand{\CAT}{{\mathbf{CAT}}}
\newcommand{\Cat}{\mathbf{Cat}}
\newcommand{\Set}{\mathbf{Set}}
\newcommand{\SET}{\mathbf{SET}}
\newcommand{\tCAT}{\mathscr{C\!A\!T}}
\newcommand{\tCat}{{\mathscr{C}\!at}}
\newcommand{\MonCATls}{{\mathscr{M}\!on\mathscr{C\!A\!T}^{\mathrm{ls}}_{\mathrm{lax}}}}
\newcommand{\SymMonCATls}{{\mathscr{S}\!ym\mathscr{M}\!on
\mathscr{C\!A\!T}^{\mathrm{ls}}_{\mathrm{lax}}}}
\newcommand{\SymMonCATstls}{{\mathscr{S}\!ym\mathscr{M}\!on
\mathscr{C\!A\!T}^{\mathrm{ls}}_{\mathrm{strong}}}}
\newcommand{\SymMonCAT}{{\mathscr{S}\!ym\mathscr{M}\!on
\mathscr{C\!A\!T}_{\mathrm{lax}}}}
\newcommand{\MonCAT}{{\mathscr{M}\!on\mathscr{C\!A\!T}_{\mathrm{lax}}}}
\newcommand{\MonCATstls}{{\mathscr{M}\!on\mathscr{C\!A\!T}^{\mathrm{ls}}_{\mathrm{strong}}}}
\newcommand{\MonCATol}{{\mathscr{M}\!on\mathscr{C\!A\!T}_{\mathrm{oplax}}}}
\newcommand{\twoCAT}{\underline{2\mhyphen\mathscr{C\!A\!T}}}
\newcommand{\Str}{\mathrm{Str}}
\newcommand{\Sem}{\mathrm{Sem}}
\newcommand{\univ}[1]{\mathcal{#1}}
\newcommand{\PROF}{{\mathscr{P\!R\!O\!F}}}
\newcommand{\PPROF}{{\mathbb{PROF}}}
\newcommand{\FPow}{{\mathscr{F\!P}\!ow}}
\newcommand{\FProd}{{\mathscr{F\!P}\!rod}}
\newcommand{\tcat}[1]{\mathscr{#1}}
\newcommand{\monoid}[1]{\mathsf{#1}}
\newcommand{\End}{\monoid{End}}
\newcommand{\Th}[1]{{\mathbf{Th}(#1)}}
\newcommand{\Mon}[1]{{\mathbf{Mon}(#1)}}
\newcommand{\FinSet}{\mathbf{FinSet}}
\newcommand{\enrich}[2]{{\langle #1, #2\rangle}}
\newcommand{\Simp}{\Delta_{a}}
\newcommand{\ord}[1]{\mathbf{#1}}
\newcommand{\dcat}[1]{\mathbb{#1}}
\newcommand{\F}{\mathbf{F}}
\newcommand{\Pcat}{\mathbf{P}}
\newcommand{\Ncat}{\mathbf{N}}
\newcommand{\NN}{\mathbb{N}}
\newcommand{\pres}[2]{{{\langle\, #1\, |\, #2 \,\rangle}}}
\newcommand{\interp}[1]{{[\![#1]\!]}}
\newcommand{\interpp}[1]{{[\![#1]\!]'}}
\newcommand{\group}{\mathrm{Grp}}
\newcommand{\Mod}[2]{{\mathbf{Mod}(#1,#2)}}
\newcommand{\olAct}[1]{{\mathscr{A}\!ct_{\mathrm{oplax}}(#1)}}
\newcommand{\olActl}[1]{{\mathscr{A}\!ct^{\mathrm{l}}_{\mathrm{oplax}}(#1)}}
\newcommand{\Enrich}[1]{{\mathscr{E}\!nrich(#1)}}
\newcommand{\Enrichr}[1]{{\mathscr{E}\!nrich^{\mathrm{r}}(#1)}}
\newcommand{\MtMod}[1]{{\mathscr{M}\!\mathscr{M}\!od(#1)}}
\newcommand{\MtModfib}[2]{{\mathbf{MMod}_{#1}(#2)}}
\newcommand{\MtTH}{{\mathscr{M}\!\mathscr{T\!H}}}
\DeclareRobustCommand{\rvdots}{%
  \vbox{
    \baselineskip4\p@\lineskiplimit\z@
    \kern-\p@
    \hbox{.}\hbox{.}\hbox{.}
  }}
\newcommand{\pto}{}
\newcommand{\pgets}{}
\DeclareRobustCommand{\pto}{\mathrel{\mathpalette\p@to@gets\to}}
\DeclareRobustCommand{\pgets}{\mathrel{\mathpalette\p@to@gets\gets}}
\newcommand{\p@to@gets}[2]{%
  \ooalign{\hidewidth$\m@th#1\mapstochar\mkern5mu$\hidewidth\cr$\m@th#1\longrightarrow$\cr}%
}
\newcommand{\ptensor}{\odot}
\newcommand{\ptensorrev}{\mathbin{\ptensor^\mathrm{rev}}}
\newcommand{\N}{\mathbb{N}}
\newcommand{\id}[1]{{\mathrm{id}_{#1}}}
\newcommand{\ob}[1]{{\mathrm{ob}(#1)}}
\newcommand{\op}{{\mathrm{op}}}
\newcommand{\co}{{\mathrm{co}}}
\newcommand{\coop}{{\mathrm{coop}}}
\newcommand{\ar}[1]{{\mathrm{ar}_{#1}}}
\newcommand{\name}[1]{{\lceil #1\rceil}}
\newcommand{\cat}[1]{\mathcal{#1}}
\newcommand{\DayI}{\widehat{I}}
\newcommand{\Dayo}{\mathbin{\widehat{\otimes}}}
\newcommand{\Lan}{\mathrm{Lan}}
\newcommand{\Ran}{\mathrm{Ran}}
\newcommand{\enGph}[1]{{{#1}\mhyphen\mathbf{Gph}}}
\newcommand{\Mnd}[1]{{\mathbf{Mnd}(#1)}}
\newcommand{\defemph}[1]{\textbf{#1}}
\newcommand{\Operad}[1]{{{#1}\mhyphen\mathbf{Opd}}}
\newcommand{\arity}[1]{{\mathrm{ar}_{#1}}}
\begin{document}
\maketitle
\begin{abstract}
Universal algebra uniformly captures various algebraic structures, 
by expressing them  
as equational theories or abstract clones. The ubiquity of 
algebraic structures in mathematics and related fields 
has given rise to several variants of 
universal algebra, such as theories of symmetric operads, 
non-symmetric operads, generalised operads, PROPs, PROs, and monads.
These variants of universal algebra are called
{notions of algebraic theory}.
In this paper, we develop a unified framework 
for them.
The key observation is that each notion of algebraic theory can be 
identified with a monoidal category, in such a way that algebraic theories 
correspond to monoid objects therein. 
To incorporate semantics, we 
introduce a categorical structure called {metamodel}, which 
formalises a definition of models of algebraic theories.
We also define morphisms between notions of algebraic theory, which are
a monoidal version of profunctors.
Every strong monoidal functor gives rise to an adjoint pair of such morphisms,
and provides a uniform method to establish isomorphisms between categories of 
models in different notions of algebraic theory.
A general structure-semantics adjointness result and a double categorical
universal property of categories of models are also shown.
\end{abstract}
\tableofcontents

\pagebreak
\section{Introduction}

Algebras permeate both pure and applied mathematics.
Important types of algebras, such as vector spaces, groups and rings, arise 
naturally in many branches of mathematical sciences and 
it would hardly be an exaggeration to say that algebraic structures 
are one of the most universal and fundamental structures in mathematics.

A type of algebras, such as groups, is normally specified by a family of 
operations and a family of equational axioms. 
We call such a specification of a type of algebras an \emph{algebraic theory},
and call a background theory for a type of algebraic theories a \emph{notion of 
algebraic theory}. 
In order to capture various types of algebras, a variety of notions of 
algebraic theory have been introduced.
Examples include {universal algebra} \cite{Birkhoff_abst_alg},
symmetric and non-symmetric operads \cite{May_loop},
generalised operads (also called clubs)
\cite{Burroni_T_cats,Kelly_club_data_type,Hermida_representable,Leinster_book}, 
PROPs  and PROs \cite{MacLane_cat_alg},
and monads \cite{Eilenberg_Moore,Linton_equational};
we shall review these notions of algebraic theory in 
Section~\ref{sec:notions_of_alg_thy}.

Notions of algebraic theory all aim to provide a means to define algebras, but 
they attain this goal in quite distinct manners.
The diversity of the existing notions of algebraic theory 
leaves one to wonder what, if any, is a formal core or essence shared by them.
Our main aim in this paper is to provide an answer to this 
question, by developing a unified 
framework for notions of algebraic theory.

\medskip

The starting point of our approach is quite simple. 
We identify a notion of algebraic theory with an (arbitrary) monoidal category,
and algebraic theories in a notion of algebraic theory with monoid objects in
the corresponding monoidal category.
As we shall see in Section~\ref{sec:metatheories_theories},
it has been observed (or easily follows from known observations)
that each type of algebraic theories we have 
listed above can be characterised as monoid objects in a suitable monoidal 
category.
From now on let us adopt the terminology to be introduced in 
Section~\ref{sec:metatheories_theories}:
we call a monoidal category a \emph{metatheory} and a monoid object therein
a \emph{theory}, to remind ourselves of our intention. 

In order to formalise the semantical aspect of notions of algebraic theory---by
which we mean definitions of \emph{models} (= {algebras}) of an algebraic 
theory, their homomorphisms, and so on---we introduce the concept of 
\emph{metamodel}.
Metamodels are a certain categorical structure defined relative to   
a metatheory $\cat{M}$ and a category $\cat{C}$, 
and are meant to capture a 
\emph{notion of model} of an algebraic theory, i.e., what it means to take a 
model of a theory in $\cat{M}$ 
in the category $\cat{C}$.
A model of an algebraic theory is always given relative to some notion of 
model, even though usually it is not recognised explicitly.
We shall say more about the idea of notions of model at the beginning of 
Section~\ref{sec:enrichment}.
A metamodel of a metatheory $\cat{M}$ in a category $\cat{C}$
generalises both an $\cat{M}$-category (as in enriched category theory) whose underlying category is $\cat{C}$,
and a {(left) oplax action} of $\cat{M}$ on $\cat{C}$.
Indeed, as we shall see in Sections~\ref{sec:enrichment} and 
\ref{sec:oplax_action}, it has been observed that enrichments
(which we introduce as a slight variant of $\cat{M}$-categories)
and oplax actions can account for the standard semantics of the known
notions of algebraic theory.
Our concept of metamodel provides a unified account of the semantical aspects 
of notions of algebraic theory.

Metamodels of a fixed metatheory $\cat{M}$ naturally form a 
2-category $\MtMod{\cat{M}}$,
and we shall see that theories in $\cat{M}$ can be identified with
certain metamodels of $\cat{M}$ in the terminal category $1$.
This way we obtain a fully faithful 2-functor 
from the category $\Th{\cat{M}}$ of theories in $\cat{M}$ (which is just the 
category of monoid objects in $\cat{M}$)
to $\MtMod{\cat{M}}$.
A metamodel $\Phi$ of $\cat{M}$ in $\cat{C}$ provides a definition of model of 
a theory in $\cat{M}$ as an object of $\cat{C}$ with additional 
structure,
hence if we fix a metamodel $(\cat{C},\Phi)$ and a theory $\monoid{T}$,
we obtain the 
category of models $\Mod{\monoid{T}}{(\cat{C},\Phi)}$ equipped with 
the forgetful functor 
$U\colon\Mod{\monoid{T}}{(\cat{C},\Phi)}\longrightarrow\cat{C}$.
By exploiting the 2-category $\MtMod{\cat{M}}$, the 
construction $\Mod{-}{-}$ of categories of models may be expressed as 
the following composition 
\begin{equation}
\label{eqn:Mod_as_hom}
\begin{tikzpicture}[baseline=-\the\dimexpr\fontdimen22\textfont2\relax ]
      \node (1) at (0,1.5)  {$\Th{\cat{M}}^\op\times \MtMod{\cat{M}}$};
      \node (2) at (0,0)  {$\MtMod{\cat{M}}^\op\times \MtMod{\cat{M}}$};
      \node (3) at (0,-1.5) {$\tCAT$,};
      \draw[->] (1) to node[auto,labelsize]{inclusion} (2);
      \draw[->] (2) to node[auto,labelsize] {$\MtMod{\cat{M}}(-,-)$} (3);
\end{tikzpicture}
\end{equation}
where $\MtMod{\cat{M}}(-,-)$ is the hom-2-functor
and $\tCAT$ is a 2-category of categories.

We also introduce morphisms (and 2-cells) between metatheories
(Section \ref{sec:morphism_metatheory}).
Such morphisms are a monoidal version of {profunctors}.
The principal motivation of the introduction of morphisms of metatheories is 
to compare different notions of algebraic theory,
and indeed our morphisms of metatheories induce 2-functors between the 
corresponding 2-categories of metamodels.
Analogously to the well-known fact for profunctors
that any functor induces an adjoint pair of profunctors,
we see that any strong monoidal functor $F$ induces an adjoint pair 
$F_\ast\dashv F^\ast$ of morphisms of metatheories.
Therefore, whenever we have a strong monoidal functor 
$F\colon \cat{M}\longrightarrow\cat{N}$ between metamodels,
we obtain a 2-adjunction
\begin{equation}
\label{eqn:2-adjunction}
\begin{tikzpicture}[baseline=-\the\dimexpr\fontdimen22\textfont2\relax ]
      \node (L) at (0,0)  {$\MtMod{\cat{M}}$};
      \node (R) at (4.5,0)  {$\MtMod{\cat{N}}$.};
      \draw[->,transform canvas={yshift=5pt}]  (L) to node[auto,labelsize] 
      {$\MtMod{F_\ast}$} (R);
      \draw[<-,transform canvas={yshift=-5pt}]  (L) to 
      node[auto,swap,labelsize] {$\MtMod{F^\ast}$} 
      (R);
      \node[rotate=-90,labelsize] at (2.25,0)  {$\dashv$};
\end{tikzpicture} 
\end{equation}
Now, the strong monoidal functor $F$ also induces a functor 
\[
\Th{F}\colon\Th{\cat{M}}\longrightarrow\Th{\cat{N}},
\]
which is in fact a restriction of $\MtMod{F_\ast}$.
This implies that, immediately from the description (\ref{eqn:Mod_as_hom})
of categories of models and the 2-adjointness (\ref{eqn:2-adjunction}),
for any $\monoid{T}\in\Th{\cat{M}}$ and $(\cat{C},\Phi)\in\MtMod{\cat{N}}$,
we have a canonical isomorphism of categories
\begin{equation}
\label{eqn:iso_models}
\Mod{\Th{F}(\monoid{T})}{(\cat{C},\Phi)}\cong
\Mod{\monoid{T}}{\MtMod{F^\ast}(\cat{C},\Phi)}.
\end{equation}
In fact, as we shall see, the action of $\MtMod{-}$ on morphisms of metatheories
preserves the ``underlying categories'' of metamodels.
So $\MtMod{F^\ast}(\cat{C},\Phi)$ is a metamodel of $\cat{M}$ in $\cat{C}$,
and we have an isomorphism of categories \emph{over} $\cat{C}$
(that is, the isomorphism (\ref{eqn:iso_models}) commutes with the forgetful 
functors).

The above argument gives a unified conceptual account for a range of known 
results on the compatibility of semantics of notions of algebraic theory.
For example, it is known that any clone (or Lawvere theory) $\monoid{T}$
induces a monad $\monoid{T}'$ on $\Set$
in such a way that the models of $\monoid{T}$ and $\monoid{T'}$ in $\Set$
(with respect to the standard notions of model) coincide;
this result follows from the existence of a natural strong monoidal functor 
between the metatheories corresponding to clones and monads on $\Set$,
together with a simple observation that the induced 2-functor between
the 2-categories of metamodels preserves the standard metamodel.
This and other examples will be treated in 
Section~\ref{sec:morphism_metatheory}.

\medskip

In Section~\ref{sec:str-sem} we study \emph{structure-semantics adjunctions} 
within our framework. 
If we fix a metatheory $\cat{M}$ and a metamodel $(\cat{C},\Phi)$ of $\cat{M}$,
we obtain a functor 
\begin{equation}
\label{eqn:semantics_from_Th}
\Th{\cat{M}}^\op\longrightarrow \CAT/\cat{C}
\end{equation}
by mapping a theory $\monoid{T}$ in $\cat{M}$  to the category of models 
$\Mod{\monoid{T}}{(\cat{C},\Phi)}$ equipped with the forgetful functor 
into $\cat{C}$.
The functor (\ref{eqn:semantics_from_Th}) is sometimes called the 
\emph{semantics functor},
and it has been observed for many notions of algebraic theory that 
this functor (or an appropriate variant of it) admits a left adjoint
called the \emph{structure functor} 
\cite{Lawvere_thesis,Linton_equational,Linton_outline,Dubuc_Kan,Street_FTM,Avery_thesis}.
The idea behind the structure functor is as follows.
One can regard a functor $V\colon\cat{A}\longrightarrow\cat{C}$
into $\cat{C}$ as specifying an additional structure (in a very broad sense) on 
objects in $\cat{C}$, by viewing $\cat{A}$ as the category of $\cat{C}$-objects 
equipped with that structure, and $V$ as the forgetful functor.
The structure functor then maps $V$ to the best approximation of that structure
by theories in $\cat{M}$.
Indeed, if (\ref{eqn:semantics_from_Th}) is fully faithful (though
this is not always the case), then the structure functor reconstructs the 
theory from
its category of models.

We cannot get a left adjoint to the functor 
(\ref{eqn:semantics_from_Th}) for an arbitrary metatheory $\cat{M}$ and its 
metamodel $(\cat{C},\Phi)$.
In order to get general structure-semantics adjunctions,
we extend the category $\Th{\cat{M}}$ of theories in $\cat{M}$
to the category $\Th{\widehat{\cat{M}}}$ of theories in the metatheory 
$\widehat{\cat{M}}=[\cat{M}^\op,\SET]$ equipped with the {convolution 
monoidal structure}~\cite{Day_thesis}.
We show in Theorem~\ref{thm:str_sem_small} that 
the structure-semantics adjunction
\[
\begin{tikzpicture}[baseline=-\the\dimexpr\fontdimen22\textfont2\relax ]
      \node(11) at (0,0) 
      {$\Th{\widehat{\cat{M}}}^\op$};
      \node(22) at (4,0) {$\CAT/\cat{C}$};
  
      \draw [->,transform canvas={yshift=5pt}]  (22) to node 
      [auto,swap,labelsize]{$\Str$} (11);
      \draw [->,transform canvas={yshift=-5pt}]  (11) to node 
      [auto,swap,labelsize]{$\Sem$} (22);
      \path (11) to node[midway](m){} (22); 

      \node at (m) [labelsize,rotate=90] {$\vdash$};
\end{tikzpicture}
\] 
exists for any metatheory $\cat{M}$ and its metamodel $(\cat{C},\Phi)$.

\medskip

We conclude the paper in Section~\ref{sec:double_lim},
by giving a universal characterisation of categories of models in our 
framework.
It is well-known that the Eilenberg--Moore categories (= categories of models) 
of monads can be characterised by a 2-categorical universal property in the 
2-category $\tCAT$ of categories~\cite{Street_FTM}.
We show in Theorem~\ref{thm:double_categorical_univ_property}
that our category of models admit a similar universal characterisation,
but instead of inside the 2-category $\tCAT$, inside the \emph{pseudo double 
category} $\PPROF$ of categories, functors, profunctors and natural 
transformations.
The notion of pseudo double category, as well as $\PPROF$ itself, 
was introduced by Grandis and Par{\'e} \cite{GP1}.
In the same paper they also introduced the notion of \emph{double limit},
a suitable limit notion in (pseudo) double categories.
The double categorical universal property that our categories of models 
enjoy can also be formulated in terms of double limits;
see Corollary~\ref{cor:Mod_as_dbl_lim}.

\medskip

During the investigation of our framework, we have encountered a number of 
problems, several of which are still open.
A major open problem is that of characterising via intrinsic properties 
the forgetful functors from the categories of models
arising in our framework.
In the case of monads, the corresponding result is the various 
\emph{monadicity theorems}, such as Beck's theorem 
\cite[Section~VI.7]{MacLane_CWM}.
We shall discuss this problem further at the end of 
Section~\ref{sec:morphism_metatheory}.

\subsection{Acknowledgements}
We are grateful to  
Martin Hyland,
Pierre-Alain Jacqmin,
Shin-ya Katsumata,
Kenji Maillard,
Paul-Andr{\'e} Melli{\`e}s,
John Power and
Exequiel Rivas
for stimulating discussions and 
helpful comments.

\section{Set theoretic conventions}
\label{sec:foundational_convention}
Due to the metatheoretical nature of the subject matter, in this paper
we will use multiple (Grothendieck) \emph{universes};
see e.g., \cite[Section~I.6]{MacLane_CWM} or \cite[Definition~1.1.1]{KS:CS}
for definitions of universes.


For the purpose of this paper, we assume 
the existence of three universes $\univ{U}_1$,
$\univ{U}_2$ and $\univ{U}_3$ with $\univ{U}_1\in\univ{U}_2\in\univ{U}_3$. 
We now fix these universes once and for all.

Let $\univ{U}$ be a universe.
We define size-regulating conditions 
on sets and other mathematical structures
in reference to $\univ{U}$.
\begin{itemize}
\item A set is said to be 
\defemph{in $\univ{U}$} if 
it is an element of $\univ{U}$.
\end{itemize}
In this paper, a category is always assumed to have sets of objects
and of morphisms (rather than \emph{proper classes} of them).
We say that a category $\cat{C}$ is
\begin{itemize}
\item \defemph{in $\univ{U}$}
if the tuple $(\ob{\cat{C}},(\cat{C}(A,B))_{A,B\in\ob{\cat{C}}},
(\id{C}\in\cat{C}(C,C))_{C\in\ob{\cat{C}}},(\circ_{A,B,C}\colon$
$\cat{C}(B,C)\times\cat{C}(A,B)\longrightarrow\cat{C}(A,C))_{A,B,C\in
\ob{\cat{C}}})$, consisting of the data for $\cat{C}$,
is an element of $\univ{U}$;
\item \defemph{locally in $\univ{U}$}
if for each $A,B\in\ob{\cat{C}}$, 
the hom-set $\cat{C}(A,B)$ is in $\cat{U}$.
\end{itemize}
We also write $C\in\cat{C}$ for $C\in\ob{\cat{C}}$.

We extend these definitions to other mathematical structures.
For example, a group is said to be \defemph{in $\univ{U}$} if it
(i.e.,  the tuple consisting of its data) is an element
of $\univ{U}$, 
a 2-category is \defemph{locally in $\univ{U}$}  
if all its hom-categories are in $\univ{U}$, and so on.

\medskip

Recall the universes $\univ{U}_1$, $\univ{U}_2$ and $\univ{U}_3$
we have fixed above.
\begin{definition}
\label{conv:size}
A set or other mathematical structure (group, category, etc.) 
is said to be:
\begin{itemize}
\item \defemph{small} if it is in $\univ{U}_1$;
\item \defemph{large} if it is in $\univ{U}_2$;
\item \defemph{huge} if it is in $\univ{U}_3$.
\end{itemize}
Sets and other mathematical structures are often assumed to be small by 
default, even when we do not say so explicitly.

A category (or a 2-category) is said to be:
\begin{itemize}
\item \defemph{locally small} if it is large and locally in $\univ{U}_1$;
\item \defemph{locally large} if it is huge and locally in $\univ{U}_2$.
\end{itemize}
\end{definition}
In the following, we mainly talk about the 
size-regulating conditions using the terms \emph{small}, \emph{large} 
and \emph{huge},
avoiding direct references to the universes $\univ{U}_1$, $\univ{U}_2$
and $\univ{U}_3$.

We shall use the following basic (2-)categories throughout this paper.
\begin{itemize}
\item $\Set$, the (large) category of all small sets and functions.
\item $\SET$, the (huge) category of all large sets and functions.
\item $\Cat$, the (large) category of all small categories and functors.
\item $\CAT$, the (huge) category of all large categories and functors.
\item $\tCat$, the (large) 2-category of all small categories, functors and 
natural 
transformations.
\item $\tCAT$, the (huge) 2-category of all large categories, functors and 
natural 
transformations.
\item $\MonCAT$, the (huge) 2-category of all large monoidal categories, lax 
monoidal functors\footnote{Also called \emph{monoidal functors} in e.g., 
\cite{MacLane_CWM}.} and monoidal natural transformations.
We also use several variants of it.
\item $\twoCAT$, the 2-category of all huge 2-categories,
2-functors and 2-natural transformations.
\end{itemize}

\section{Notions of algebraic theory}
\label{sec:notions_of_alg_thy}

In this section, we review several known 
notions of algebraic theory.
As we shall see later, they all turn out to be instances of our unified 
framework for notions of algebraic theory developed from 
the next section on.
A more introductory account of these notions of algebraic theory
(except for PROPs and PROs) may be found in \cite[Chapter~2]{Fujii_thesis}.

\subsection{Clones}
\label{sec:clone}
\emph{Abstract clones} (\emph{clones} for short) \cite{Taylor_clone} are a 
presentation independent 
version of equational theories in universal algebra and, as such,
they are more or less equivalent to \emph{Lawvere 
theories}~\cite{Lawvere_thesis}.
Let us begin with the definition.

\begin{definition}
\label{def:clone}
A \defemph{clone} $\monoid{T}$ consists of:
\begin{description}
\item[(CD1)] a family of sets $T=(T_n)_{n\in\NN}$ indexed by natural numbers;
\item[(CD2)] for each $n\in\NN$ and $i\in\{1,\dots,n\}$, an element
\[
p^{(n)}_i\in T_n;
\]
\item[(CD3)] for each $k,n\in\NN$, a function
\[
\circ^{(n)}_k\colon T_k\times (T_n)^k\longrightarrow T_n
\]
whose action on an element $(\phi,\theta_1,\dots,\theta_k)\in T_k\times (T_n)^k$
we write as $\phi\circ^{(n)}_k(\theta_1,\dots,\theta_k)$
or simply $\phi\circ(\theta_1,\dots,\theta_k)$;
\end{description}
satisfying the following equations:
\begin{description}
\item[(CA1)] for each $k,n\in\NN$, $j\in\{1,\dots,k\}$, 
$\theta_1,\dots,\theta_k\in T_n$, 
\[
p^{(k)}_j\circ^{(n)}_k(\theta_1,\dots,\theta_k) = \theta_j;
\]
\item[(CA2)] for each $n\in\NN$, $\theta\in T_n$,
\[
\theta\circ^{(n)}_n (p^{(n)}_1,\dots,p^{(n)}_n) = \theta;
\]
\item[(CA3)] for each $l,k,n\in\NN$, $\psi\in T_l$, $\phi_1,\dots,\phi_l\in
T_k$, $\theta_1,\dots,\theta_k\in T_n$, 
\begin{multline*}
\psi\circ^{(k)}_l\big(\phi_1\circ^{(n)}_k(\theta_1,\dots,\theta_k),\ 
\dots,\ \phi_l\circ^{(n)}_k(\theta_1,\dots,\theta_k)\big)
\\=
\big(\psi\circ^{(k)}_l(\phi_1,\dots,\phi_l)\big)\circ^{(n)}_k 
(\theta_1,\dots,\theta_k).
\end{multline*}
\end{description}
Such a clone is written as 
$\monoid{T}=(T,(p^{(i)}_n)_{n\in\NN,i\in\{1,\dots,n\}},
(\circ^{(n)}_k)_{k,n\in\NN})$
or simply $(T,p,\circ)$.
\end{definition}

The following example shows a typical way in which clones arise.

\begin{example}
\label{ex:endoclone}
Let $\cat{C}$ be a locally small category with all finite products, 
and $C$ be an object of $\cat{C}$.
Then we obtain the clone $\End_\cat{C}(C)=(\enrich{C}{C},p,\circ)$ defined as 
follows:
\begin{description}
\item[(CD1)] for each $n\in\NN$, let $\enrich{C}{C}_n$ be the hom-set 
$\cat{C}(C^n,C)$, where $C^n$ is the product of $n$ copies of $C$ (the 
$n$-th power of $C$);
\item[(CD2)] for each $n\in\NN$ and $i\in \{1,\dots,n\}$, let 
$p^{(n)}_i\colon C^n\longrightarrow C$ be the $i$-th projection;
\item[(CD3)] for each $k,n\in\NN$, $g\colon C^k\longrightarrow C$
and $f_1,\dots,f_k\colon C^n\longrightarrow C$, let 
$g\circ^{(n)}_k (f_1,\dots,f_k)\colon$ $C^n\longrightarrow C$ be the following 
composite in $\cat{C}$:
\[
\begin{tikzpicture}[baseline=-\the\dimexpr\fontdimen22\textfont2\relax ]
      \node (1) at (0,0)  {$C^n$};
      \node (2) at (3,0)  {$C^k$};
      \node (3) at (5,0) {$C.$};
      \draw[->] (1) to node[auto,labelsize]{$\langle f_1,\dots,f_k\rangle$} (2);
      \draw[->] (2) to node[auto,labelsize] {$g$} (3);
\end{tikzpicture} 
\]
\end{description}
It is straightforward to check the axioms (CA1)--(CA3).
\end{example}

In fact, \emph{every} clone arises in the above manner.
To see this, let $\monoid{T}=(T,p,\circ)$ be an arbitrary clone.
We can then define the category $\cat{C}_\monoid{T}$,
whose set of objects is the set $\NN$ of natural numbers and 
whose hom-sets are given by $\cat{C}_\monoid{T}(n,m) = (T_n)^m$.
We may routinely define identity morphisms and composition in 
$\cat{C}_\monoid{T}$ from the structure of $\monoid{T}$,
and it turns out that the object $n\in\cat{C}_\monoid{T}$ 
is the $n$-th power of $1\in \cat{C}_\monoid{T}$ in $\cat{C}_\monoid{T}$;
thus we recover $\monoid{T}$ as $\End_{\cat{C}_\monoid{T}}(1)$.
This construction also shows how clones and Lawvere theories are 
related.
See \cite{Taylor_clone} for details.

Another source of clones is provided by 
\emph{presentations of equational theories}
in universal algebra.
A presentation of an equational theory $\pres{\Sigma}{E}$ 
is given by a family $\Sigma$ of basic operations (with designated arities)
and a family $E$ of equational axioms between $\Sigma$-terms.
Given such a data, we can define a clone 
$\monoid{T}^\pres{\Sigma}{E}=(T^\pres{\Sigma}{E},p,\circ)$
by setting $(T^\pres{\Sigma}{E})_n$ to be the set of all $\Sigma$-terms 
in which at most $n$ different variables occur, modulo \emph{equational 
theorems} derivable from $E$, $p$ to be the equivalence classes
of variables, and $\circ$ to be the
simultaneous substitution; see \cite[Sections~2.1 and 2.2]{Fujii_thesis} for 
details.
It is well-known that various types of algebras, such as groups, monoids and 
rings, admit presentations of equational theories, and hence by the above 
construction we obtain the \emph{clone of groups}, etc. 
Such are the motivating examples of clones  \textit{qua} algebraic theories.

\medskip

Homomorphisms of clones may be defined routinely.
\begin{definition}
\label{def:clone_hom}
Let $\monoid{T}=(T,p,\circ)$ and $\monoid{T'}=(T',p',\circ')$ be 
clones.
A \defemph{clone homomorphism} from $\monoid{T}$ to $\monoid{T'}$
is a family of functions
$h=(h_n\colon T_n\longrightarrow T'_n)_{n\in\NN}$ which preserves the structure 
of clones;
precisely,
\begin{itemize}
\item for each $n\in\NN$ and $i\in\{1,\dots,n\}$, $h_n(p^{(n)}_i)=p'^{(n)}_i$;
\item for each $k,n\in\NN$, $\phi\in T_k$ and $\theta_1,\dots,\theta_k\in T_n$,
\[
h_n\big(\phi\circ^{(n)}_k(\theta_1,\dots,\theta_k)\big)=
h_k(\phi)\circ'^{(n)}_k \big(h_n(\theta_1),\dots,h_n(\theta_k)\big).
\]
\end{itemize}
\end{definition}

Let us turn to the definition of models of a clone.
One can consider models of a clone in any locally small category with finite 
products.
\begin{definition}
\label{def:clone_model}
Let $\monoid{T}$ be a clone and $\cat{C}$ be a locally small category with all
finite products.
A \defemph{model of $\monoid{T}$ in $\cat{C}$} consists of an object 
$C\in\cat{C}$ together with 
a clone homomorphism $\chi\colon \monoid{T}\longrightarrow \End_\cat{C}(C)$.
\end{definition}
For example, models of the clone of groups in $\cat{C}$ are precisely
\emph{group objects} (or \emph{internal groups}) in $\cat{C}$. 

We then define the notion of homomorphism between models (of a clone).
First we note the functoriality of the $\enrich{-}{-}$ construction, 
already appeared (partly) in Example~\ref{ex:endoclone}.
For the current purpose, it suffices to remark that for each locally small 
category $\cat{C}$ with finite products and each $n\in\NN$,
the assignment $\enrich{A}{B}_n=\cat{C}(A^n,B)$ canonically extends to a 
functor 
\[
\enrich{-}{-}_n\colon \cat{C}^\op\times\cat{C}\longrightarrow \Set.
\]

\begin{definition}
\label{def:clone_mod_hom}
Let $\monoid{T}$ be a clone, $\cat{C}$ be a locally small category with all 
finite products, and $(A,\alpha)$ and $(B,\beta)$ be models of 
$\monoid{T}$ in $\cat{C}$.
A \defemph{homomorphism} from $(A,\alpha)$ to $(B,\beta)$ is a morphism 
$f\colon A\longrightarrow B$ in $\cat{C}$ such that for each $n\in\NN$,
the following diagram commutes:
\[
\begin{tikzpicture}[baseline=-\the\dimexpr\fontdimen22\textfont2\relax ]
      \node (TL) at (0,2)  {$T_n$};
      \node (TR) at (3,2)  {$\enrich{A}{A}_n$};
      \node (BL) at (0,0) {$\enrich{B}{B}_n$};
      \node (BR) at (3,0) {$\enrich{A}{B}_n.$};
      \draw[->] (TL) to node[auto,labelsize](T) {$\alpha_n$} (TR);
      \draw[->]  (TR) to node[auto,labelsize] {$\enrich{A}{f}_n$} (BR);
      \draw[->]  (TL) to node[auto,swap,labelsize] {$\beta_n$} (BL);
      \draw[->] (BL) to node[auto,swap,labelsize](B) {$\enrich{f}{B}_n$} (BR);
\end{tikzpicture} 
\]
\end{definition}
For each clone $\monoid{T}$ and a locally small 
category $\cat{C}$ with all finite products, we denote the category of all 
models of $\monoid{T}$ in $\cat{C}$ and their homomorphisms by 
$\Mod{\monoid{T}}{\cat{C}}$.
Note that we have the canonical forgetful functor 
$U\colon\Mod{\monoid{T}}{\cat{C}}\longrightarrow\cat{C}$.

\subsection{Non-symmetric operads}
Non-symmetric operads~\cite{May_loop} may be seen as a variant
of clones.
Compared to clones, non-symmetric operads are less expressive (for example,
groups cannot be captured by non-symmetric operads), but 
their models can be taken in wider contexts than for clones (whereas models of 
clones are taken in a locally small category with finite products,
models of non-symmetric operads can be taken in any locally small monoidal 
category).

\begin{definition}
\label{def:non_symm_op}
A \defemph{non-symmetric operad} $\monoid{T}$ consists of:
\begin{description}
\item[(ND1)] a family of sets $T=(T_n)_{n\in\NN}$ indexed by natural numbers;
\item[(ND2)] an element $\id{}\in T_1$;
\item[(ND3)] for each $k,n_1,\dots,n_k\in \NN$, a function
(we omit the sub- and superscripts)
\[
\circ\colon T_k\times T_{n_1}\times\dots\times T_{n_k}\longrightarrow 
T_{n_1+\dots+ n_k}
\]
whose action we write as $(\phi,\theta_1,\dots,\theta_k)\longmapsto \phi\circ 
(\theta_1,\dots,\theta_k)$
\end{description}
satisfying the following equations:
\begin{description}
\item[(NA1)] for each $n\in\NN$ and $\theta\in T_n$,
\[
\id{}\circ (\theta)=\theta;
\]
\item[(NA2)] for each $n\in\NN$ and $\theta\in T_n$,
\[
\theta\circ (\id{},\dots,\id{}) = \theta;
\] 
\item[(NA3)] for each $l,\ k_1,\dots,k_l,\ 
n_{1,1},\dots,n_{1,k_1},\dots,n_{l,1},
\dots,n_{l,k_l}\in\NN$, \ $\psi\in T_l$, \ $\phi_1\in T_{k_1}, \dots, \phi_l\in 
T_{k_l}$, \ $\theta_{1,1}\in T_{n_{1,1}},\dots,\theta_{1,k_1}\in T_{n_{1,k_1}},
\dots, \theta_{l,1}\in T_{n_{l,1}},\dots,$ $\theta_{l,k_l}\in T_{n_{l, k_l}}$,
\begin{multline*}
\psi\circ\big(\phi_1\circ (\theta_{1,1},\dots,\theta_{1,k_1}),\ \dots\ ,
\phi_l\circ (\theta_{l,1},\dots,\theta_{l,k_l}) \big)\\
=
\big(\psi\circ 
(\phi_1,\dots,\phi_l)\big)\circ(\theta_{1,1},\dots,\theta_{1,k_1},\ \dots,\ 
\theta_{l,1},\dots,\theta_{l,k_l}).
\end{multline*}
\end{description}
Such a non-symmetric operad is written as $\monoid{T}=(T,\id{},\circ)$.
\end{definition}

\begin{example}
\label{ex:endo_ns_operad}
Let $\cat{C}=(\cat{C},I,\otimes)$ be a locally small monoidal category, 
and $C$ be an object of $\cat{C}$.
Define the non-symmetric operad $\End_\cat{C}(C)=(\enrich{C}{C},\id{},\circ)$ 
as follows:
\begin{description}
\item[(ND1)] for each $n\in\NN$, let $\enrich{C}{C}_n$ be the hom-set 
$\cat{C}(C^{\otimes n},C)$, where $C^{\otimes n}$ is the monoidal product of 
$n$ copies of $C$;
\item[(ND2)] let 
$\id{}\colon C\longrightarrow C$ be the identity morphism on $\cat{C}$;
\item[(ND3)] for each $k,n_1,\dots,n_k\in\NN$, $g\colon C^{\otimes 
k}\longrightarrow C$
and $f_1\colon C^{\otimes n_1}\longrightarrow C,\dots,f_k\colon C^{\otimes 
n_k}\longrightarrow C$, let 
$g\circ (f_1,\dots,f_k)\colon$ $C^{\otimes (n_1+\dots+n_k)}\longrightarrow C$ 
be 
the following 
composite in $\cat{C}$:
\[
\begin{tikzpicture}[baseline=-\the\dimexpr\fontdimen22\textfont2\relax ]
      \node (1) at (-3,0)  {$C^{\otimes (n_1+\dots+n_k)}\cong C^{\otimes 
      n_1}\otimes \dots \otimes C^{\otimes n_k}$};
      \node (2) at (3,0)  {$C^{\otimes k}$};
      \node (3) at (5,0) {$C.$};
      \draw[->] (1) to node[auto,labelsize]{$f_1\otimes \dots\otimes f_k$} (2);
      \draw[->] (2) to node[auto,labelsize] {$g$} (3);
\end{tikzpicture} 
\]
\end{description}
It is straightforward to check the axioms (NA1)--(NA3).
\end{example}

Just like the case of clones, it can be shown that {every} non-symmetric operad 
arises as above.
Given an arbitrary non-symmetric operad $\monoid{T}=(T,\id{},\circ)$,
we define a strict monoidal category $\cat{C}_\monoid{T}$
whose set of objects is $\NN$ and whose hom-sets are defined as 
\[
\cat{C}_\monoid{T}(n,m)=
\coprod_{\substack{n_1,\dots,n_m\in\NN\\n_1+\dots+n_m=n}}
T_{n_1}\times\dots\times T_{n_m},
\]
with sum of natural numbers as the monoidal product;
cf.~\cite[Section~2.3]{Leinster_book}.
We then obtain $\monoid{T}$ as $\End_{\cat{C}_\monoid{T}}(1)$.

As for examples of non-symmetric operads naturally seen as algebraic theories,
we may obtain a non-symmetric operad 
from a {strongly regular} presentation of an equational theory.
Here, a presentation of an equational theory is called \defemph{strongly 
regular} if
each of its equational axioms satisfies the condition that, on each side of the 
equation, exactly same variables appear without repetition and in the same 
order.
For example, assuming that $e$ is a nullary operation and $\cdot$ is a binary 
operation, the equations
\[
(x\cdot y)\cdot z = x\cdot (y\cdot z), \quad x \cdot e = x 
\]
are strongly regular, whereas the equations
\[
x\cdot x = x,\quad x\cdot y = x,\quad x\cdot y = y\cdot x
\]
are not.
The structure of monoids can be expressed by a non-symmetric operad.

\medskip

We may define the notion of \defemph{non-symmetric operad homomorphism} 
between non-symmetric operads 
just in the same way as that of clone homomorphism 
(Definition~\ref{def:clone_hom}).

The definition of models of a non-symmetric operad is also similar to that of 
a clone (Definition~\ref{def:clone_model}).
\begin{definition}
\label{def:ns_operad_model}
Let $\monoid{T}$ be a non-symmetric operad and $\cat{C}$ be a locally small 
monoidal category.
A \defemph{model of $\monoid{T}$ in $\cat{C}$} consists of an object 
$C\in\cat{C}$ together with a non-symmetric operad homomorphism
$\chi\colon \monoid{T}\longrightarrow \End_\cat{C}(C)$.
\end{definition}

With the definition of homomorphisms between models
completely parallel to the case of clones (Definition~\ref{def:clone_mod_hom}),
we obtain the category of models $\Mod{\monoid{T}}{\cat{C}}$
(as well as the associated forgetful functor)
for each non-symmetric operad $\monoid{T}$ and a locally small monoidal
category $\cat{C}$.

\subsection{Symmetric operads}
\label{sec:sym_operad}
Symmetric operads~\cite{May_loop} are an intermediate notion of algebraic theory
which lie between clones and 
non-symmetric operads, both in terms of expressive power and 
in terms of range of notions of models.

In terms of expressive power, symmetric operads correspond to 
regular presentations of equational theories,
where a presentation of an equational theory is called \defemph{regular}
if each of its equational axioms satisfies the following condition:
on each side of the equation, exactly same variables appear without repetition
(but possibly in different order).
So we may express \emph{commutative} monoids in addition to monoids
via symmetric operads.

In terms of range of notions of models,
one can take models of a symmetric operad in any locally small symmetric 
monoidal category.

We omit the definitions of symmetric operads, their models and so on, 
which are analogous to the cases of clones and non-symmetric operads.
See e.g.~\cite[Section~2.4]{Fujii_thesis} for details.

\subsection{PROs}
PROs~\cite{MacLane_cat_alg} are a certain generalisation of
non-symmetric operads.

\begin{definition}
\label{def:PRO}
A \defemph{PRO} is a small strict monoidal category
whose set of objects is $\NN$ and whose monoidal product
on objects is the sum of natural numbers.
\end{definition}
Homomorphisms of PROs are identity-on-objects strict monoidal functors. 

The relationship of PROs to non-symmetric operads is as follows.
Given a non-symmetric operad, one can define a PRO
by the construction sketched just after Example~\ref{ex:endo_ns_operad}.
PROs are more general than non-symmetric operads in that algebraic structures 
described by them allow many-in, many-out operations
(operations of type $C^{\otimes n}\longrightarrow C^{\otimes m}$), whereas 
those described by non-symmetric operads
allow only many-in, single-out operations (those of type $C^{\otimes 
n}\longrightarrow C$).
Therefore via PROs, one can capture the structure of 
\emph{comonoids} in addition to monoids.

\begin{example}
\label{ex:endo_PRO}
Let $\cat{C}=(\cat{C},I,\otimes)$ be a locally small monoidal category, 
and $C$ be an object of $\cat{C}$.
Then we obtain the PRO $\End_\cat{C}(C)$ defined as follows:
for each $n,m\in\NN$, let the hom-set $\End_\cat{C}(C)(n,m)$ of 
the PRO be the hom-set 
$\cat{C}(C^{\otimes n},C^{\otimes m})$ of $\cat{C}$, with the rest of the 
structure
induced from $\cat{C}$ in the evident manner.
\end{example}

Models of a PRO $\monoid{T}$
can be taken in any locally small monoidal category $\cat{C}$,
and are sometimes defined as strong monoidal functors of type 
$\monoid{T}\longrightarrow\cat{C}$,
following the doctrine of \emph{functorial semantics}~\cite{Lawvere_thesis}.
We shall adopt the following alternative, more or less equivalent definition
(appearing e.g., in \cite{MacLane_cat_alg}).

\begin{definition}
\label{def:PRO_model}
Let $\monoid{T}$ be a PRO and $\cat{C}$ be a locally small monoidal category.
A \defemph{model of $\monoid{T}$ in $\cat{C}$} consists of 
an object $C\in\cat{C}$ together with a PRO homomorphism
$\chi\colon \monoid{T}\longrightarrow\End_\cat{C}(C)$.
\end{definition}

With the straightforward definition of homomorphisms of models
(just like the case of clones), we obtain 
the category of models $\Mod{\monoid{T}}{\cat{C}}$ and 
the associated forgetful functor.

\medskip

The above definition of models not only looks similar to the corresponding 
definitions for clones, symmetric operads and non-symmetric operads,
but it also eliminates certain redundancy involved in
the definition of models as certain functors.
That is, if we define a model to be a strong monoidal functor 
$\monoid{T}\longrightarrow\cat{C}$,
to give such a data
we have to specify the image of each object in $\monoid{T}$,
and this involves a choice, for each natural number $n$, 
of a specific $n$-fold monoidal product of the 
underlying object; but
such choices are usually not included 
in the data of an algebraic structure.
In more precise terms,
such a definition of models 
(in the spirit of functorial semantics) in general prevents 
the forgetful functor out of the category of models from being 
\emph{amnestic}~\cite[Definition~3.27]{AHS:joy-of-cats}.
On the other hand, it will turn out that 
all forgetful functors arising in our unified framework for notions of 
algebraic theory are amnestic.

\subsection{PROPs}
PROPs \cite{MacLane_cat_alg} are the {symmetric} version of PROs.

\begin{definition}
\label{def:PROP}
A \defemph{PROP} is a small symmetric strict monoidal category whose set of 
objects is $\NN$ and whose monoidal product on objects is the sum 
of natural numbers.
\end{definition}
Here, by a \defemph{symmetric strict monoidal category} we mean 
a symmetric monoidal category
whose structure isomorphisms \emph{except for the symmetry} are 
identities.
Similarly to the case of PROs, we can
take models of a PROP in any locally small symmetric monoidal category.

\subsection{Monads}
The definition of monads on a category is well-known.
For any large category $\cat{C}$, we shall denote the category of
all monads on $\cat{C}$ and monad morphisms by $\Mnd{\cat{C}}$,
which we define to be the category of monoid objects 
in the monoidal category $[\cat{C},\cat{C}]$ of all endofunctors on $\cat{C}$
(with composition of endofunctors as monoidal product);
note that our definition of monad morphism is {opposite}
in the direction to the one defined in \cite{Street_FTM}.

The following definition of
models of a monad, usually called \emph{Eilenberg--Moore 
algebras}, is also well-known.

\begin{definition}[\cite{Eilenberg_Moore}]
\label{def:EM_alg}
Let $\cat{C}$ be a large category and $\monoid{T}=(T,\eta,\mu)$ be a monad on 
$\cat{C}$.
\begin{enumerate}
\item 
An \defemph{Eilenberg--Moore algebra of $\monoid{T}$}
consists of:
\begin{itemize}
\item an object $C\in\cat{C}$;
\item a morphism $\gamma\colon TC\longrightarrow C$ in $\cat{C}$,
\end{itemize}
making the following diagrams commute:
\begin{equation*}
\begin{tikzpicture}[baseline=-\the\dimexpr\fontdimen22\textfont2\relax ]
      \node (TL) at (0,2)  {$C$};
      \node (TR) at (2,2)  {$TC$};
      \node (BR) at (2,0)  {$C$};
      \draw[->] (TL) to node[auto,labelsize](T) {$\eta_C$} (TR);
      \draw[->] (TR) to node[auto,labelsize](R) {$\gamma$} (BR);
      \draw[->]  (TL) to node[auto,labelsize,swap] (L) {$\id{C}$} (BR);
\end{tikzpicture} 
\qquad
\begin{tikzpicture}[baseline=-\the\dimexpr\fontdimen22\textfont2\relax ]
      \node (TL) at (0,2)  {$TTC$};
      \node (TR) at (2,2)  {$TC$};
      \node (BL) at (0,0) {$TC$};
      \node (BR) at (2,0) {$C.$};
      \draw[->] (TL) to node[auto,labelsize](T) {$\mu_C$} (TR);
      \draw[->]  (TR) to node[auto,labelsize] {$\gamma$} (BR);
      \draw[->]  (TL) to node[auto,swap,labelsize] {$T\gamma$} (BL);
      \draw[->] (BL) to node[auto,labelsize](B) {$\gamma$} (BR);
\end{tikzpicture} 
\end{equation*}
\item Let $(C,\gamma)$ and $(C',\gamma')$ be Eilenberg--Moore algebras of 
$\monoid{T}$.
A \defemph{homomorphism} from $(C,\gamma)$ to $(C',\gamma')$ 
is a morphism $f\colon C\longrightarrow C'$ in $\cat{C}$
making the following diagram commute:
\[
\begin{tikzpicture}[baseline=-\the\dimexpr\fontdimen22\textfont2\relax ]
      \node (TL) at (0,2)  {$TC$};
      \node (TR) at (2,2)  {$TC'$};
      \node (BL) at (0,0) {$C$};
      \node (BR) at (2,0) {$C'.$};
      \draw[->] (TL) to node[auto,labelsize](T) {$Tf$} (TR);
      \draw[->]  (TR) to node[auto,labelsize] {$\gamma'$} (BR);
      \draw[->]  (TL) to node[auto,swap,labelsize] {$\gamma$} (BL);
      \draw[->] (BL) to node[auto,labelsize](B) {$f$} (BR);
\end{tikzpicture}
\]
\end{enumerate}
The category of all Eilenberg--Moore algebras of $\monoid{T}$ and their
homomorphisms is called the \defemph{Eilenberg--Moore category of $\monoid{T}$},
and is denoted by $\cat{C}^\monoid{T}$.
\end{definition}
Excellent introductions to monads abound (see e.g., 
\cite[Chapter~VI]{MacLane_CWM}). 
Here we simply remark that, seen as a notion of algebraic theory, monads on 
$\Set$ have stronger expressive power than 
clones, and can express such infinitary structures as 
compact Hausdorff spaces and complete semilattices~\cite{Manes}. 

\medskip

Let us make clear our convention as to which class of monads counts as a 
single notion of algebraic theory.
We understand that for each large category $\cat{C}$,
the monads on $\cat{C}$ form a single notion of algebraic theory.
So actually we have just introduced 
an infinite family of notions of algebraic theory, one notion 
for each large category. 

\subsection{Generalised operads}
\label{sec:S-operad}
Just like monads are a family of notions of algebraic theory 
parameterised by a large category, the term
\emph{generalised operads} 
\cite{Burroni_T_cats,Kelly_club_data_type,Hermida_representable,Leinster_book}
also refer to a family of notions of algebraic 
theory, this time parameterised by a large category with finite limits and a 
\emph{cartesian monad} thereon.
We start with the definition of cartesian monad.

\begin{definition}
\begin{enumerate}
\item Let $\cat{C}$ and $\cat{D}$ be categories, and $F,G\colon 
\cat{C}\longrightarrow \cat{D}$ be functors.
A natural transformation $\alpha\colon F\longrightarrow G$
is called \defemph{cartesian} if and only if all naturality squares of $\alpha$
are pullback squares;
that is, if and only if 
for any morphism $f\colon C\longrightarrow C'$ in $\cat{C}$, 
the square 
\[
\begin{tikzpicture}[baseline=-\the\dimexpr\fontdimen22\textfont2\relax ]
      \node (TL) at (0,2)  {$FC$};
      \node (TR) at (2,2)  {$GC$};
      \node (BL) at (0,0) {$FC'$};
      \node (BR) at (2,0) {$GC'$};
      \draw[->] (TL) to node[auto,labelsize](T) {$\alpha_C$} (TR);
      \draw[->]  (TR) to node[auto,labelsize] {$Gf$} (BR);
      \draw[->]  (TL) to node[auto,swap,labelsize] {$Ff$} (BL);
      \draw[->] (BL) to node[auto,labelsize](B) {$\alpha_{C'}$} (BR);
\end{tikzpicture}
\]
is a pullback of $Gf$ and $\alpha_{C'}$.
\item Let $\cat{C}$ be a category with all pullbacks.
A monad $\monoid{S}=(S,\eta,\mu)$ on $\cat{C}$ is called 
\defemph{cartesian} if and only if the functor $S$ preserves pullbacks,
and $\eta$ and $\mu$ are cartesian.
\end{enumerate}
\end{definition}

For each cartesian monad $\monoid{S}=(S,\eta,\mu)$ on a large category $\cat{C}$
with all finite limits
we now introduce \emph{$\monoid{S}$-operads}, which 
form a single notion of algebraic theory.

The crucial observation is that under this assumption,
the slice category $\cat{C}/S1$ (where $1$ is the terminal object of $\cat{C}$) 
acquires a canonical monoidal structure, given as follows.
(We write an object of $\cat{C}/S1$ either as $p\colon P\longrightarrow S1$
or $(P,p)$.)
\begin{itemize}
\item The unit object is $I=(\eta_1\colon 1\longrightarrow S1)$.
\item Given a pair of objects $p\colon P\longrightarrow S1$ and $q\colon 
Q\longrightarrow S1$ in $\cat{C}/S1$,
first form the pullback
\begin{equation}
\label{eqn:pb_over_S1}
\begin{tikzpicture}[baseline=-\the\dimexpr\fontdimen22\textfont2\relax ]
      \node(0) at (0,1) {$(Q,q)\ast P$};
      \node(1) at (2,1) {$SP$};
      \node(2) at (0,-1) {$Q$};
      \node(3) at (2,-1) {$S1,$};
      
      \draw [->] 
            (0) to node (t)[auto,labelsize] {$\pi_2$} 
            (1);
      \draw [->] 
            (1) to node (r)[auto,labelsize] {$S!$} 
            (3);
      \draw [->] 
            (0) to node (l)[auto,swap,labelsize] {$\pi_1$} 
            (2);
      \draw [->] 
            (2) to node (b)[auto,swap,labelsize] {$q$} 
            (3);  

     \draw (0.2,0.4) -- (0.6,0.4) -- (0.6,0.8);
\end{tikzpicture}
\end{equation}
where $!\colon P\longrightarrow 1$ is the unique morphism to the terminal 
object.
The monoidal product $(Q,q)\otimes(P,p)\in\cat{C}/S1$
is $((Q,q)\ast P, \mu_1\circ Sp\circ \pi_2)$:
\[
\begin{tikzpicture}[baseline=-\the\dimexpr\fontdimen22\textfont2\relax ]
      \node(0) at (-0.5,0) {$(Q,q)\ast P$};
      \node(1) at (2,0) {$SP$};
      \node(2) at (4,0) {$SS1$};
      \node(3) at (6,0) {$S1.$};
      
      \draw [->] 
            (0) to node (t)[auto,labelsize] {$\pi_2$} 
            (1);
      \draw [->] 
            (1) to node (r)[auto,labelsize] {$Sp$} 
            (2);
      \draw [->] 
            (2) to node (l)[auto,labelsize] {$\mu_1$} 
            (3); 
\end{tikzpicture}
\]
\end{itemize}
We remark that this monoidal category arises as a restriction of Burroni's 
bicategory of $\monoid{S}$-spans~\cite{Burroni_T_cats}.

\begin{definition}
\label{def:S_operad}
Let $\cat{C}$ be a large category with all finite limits 
and $\monoid{S}=(S,\eta,\mu)$ a cartesian monad on $\cat{C}$.
\begin{enumerate}
\item An \defemph{$\monoid{S}$-operad} is a monoid object in the monoidal 
category
$(\cat{C}/S1,I,\otimes)$ introduced above.
\item A \defemph{morphism of $\monoid{S}$-operads} is a homomorphism 
of monoid objects in $(\cat{C}/S1,I,\otimes)$.
\end{enumerate}
We denote the category of $\monoid{S}$-operads and their homomorphisms by 
$\Operad{\monoid{S}}$;
by definition it is identical to the category of monoid
objects in $\cat{C}/S1$.
\end{definition}

Given an $\monoid{S}$-operad $\monoid{T}=((\arity{T}\colon 
T\longrightarrow S1),e,m)$, we may interpret
the object $S1$ in $\cat{C}$
as the object of arities, $T$ as the object of all
(derived) operations of the algebraic theory expressed by $\monoid{T}$, 
and $\ar{T}$ as assigning an arity to each operation.

\begin{example}[{\cite[Example~4.2.7]{Leinster_book}}]
\label{ex:non_symm_op_as_gen_op}
If we let $\cat{C}=\Set$ and $\monoid{S}$ be the free monoid monad
(which is cartesian), 
then $\monoid{S}$-operads are equivalent to {non-symmetric operads}.
The arities are the natural numbers: $S1\cong\mathbb{N}$.

In more detail, the data of an $\monoid{S}$-operad in this case
consist of a set $T$, and functions 
$\arity{T}\colon T\longrightarrow \N$, $e\colon 1\longrightarrow T$
and $m\colon (T,\arity{T})\ast T\longrightarrow T$. 
Unravelling this, we obtain a family of sets $(T_n)_{n\in\N}$, 
an element $\id{}\in T_1$ and 
a family of functions $(m_{k,n_1,\dots n_k}\colon T_k\times T_{n_1}\times\dots
\times T_{n_k}
\longrightarrow 
T_{n_1+\dots +n_k})_{k,n_1,\dots,n_k\in\N}$,
agreeing with Definition~\ref{def:non_symm_op}.
Note that indeed $T_n$ may be interpreted as the set of all (derived) 
operations of arity $n$.
\end{example}

\begin{example}
If we set $\cat{C}=\enGph{n}$, the category of $n$-graphs for 
$n\in\NN\cup\{\omega\}$ 
and $\monoid{S}$ be the free strict $n$-category monad, 
then 
$\monoid{S}$-operads are called \emph{$n$-globular operads};
see \cite[Chapter~8]{Leinster_book} for details.
These generalised operads have been used to give a definition of 
weak $n$-categories \cite{Batanin_98,Leinster_book}.
\end{example}

Next we define models of an $\monoid{S}$-operad.
For this, we first note that the monoidal category $\cat{C}/S1$ has a canonical
pseudo action on $\cat{C}$.
(The precise definition of pseudo action is a variant of 
Definition~\ref{def:oplax_action}, obtained by replacing the term ``natural 
transformation'' there by ``natural isomorphism''.)
The functor 
\[
\ast\colon(\cat{C}/S1)\times\cat{C}\longrightarrow\cat{C}
\]
defining this pseudo action is given by mapping $((Q,q),P)\in 
(\cat{C}/S1)\times\cat{C}$ to
$(Q,q)\ast P\in\cat{C}$ defined as the pullback (\ref{eqn:pb_over_S1}).
\begin{definition}
\label{def:S-operad_model}
Let $\cat{C}$ be a large category with finite limits, $\monoid{S}=(S,\eta,\mu)$
be a cartesian monad on $\cat{C}$, and $\monoid{T}=(T,e,m)$ be an 
$\monoid{S}$-operad.
\begin{enumerate}
\item A \defemph{model of $\monoid{T}$} consists of:
\begin{itemize}
\item an object $C\in\cat{C}$;
\item a morphism $\gamma\colon T\ast C\longrightarrow C$ in $\cat{C}$,
\end{itemize}
making the following diagrams commute:
\begin{equation*}
\begin{tikzpicture}[baseline=-\the\dimexpr\fontdimen22\textfont2\relax ]
      \node (TL) at (0,2)  {$I\ast C$};
      \node (TR) at (2,2)  {$T\ast C$};
      \node (BR) at (2,0) {${C}$};
      \draw[->]  (TL) to node[auto,labelsize] {$e\ast C$} (TR);
      \draw[->]  (TR) to node[auto,labelsize] {$\gamma$} (BR);
      \draw[->] (TL) to node[auto,swap,labelsize](B) {$\cong$} (BR);
\end{tikzpicture} 
\qquad
\begin{tikzpicture}[baseline=-\the\dimexpr\fontdimen22\textfont2\relax ]
      \node (TL) at (0,2)  {$(T\otimes T)\ast C$};
      \node (TR) at (5,2)  {$T\ast C$};
      \node (BL) at (0,0) {$T\ast (T\ast C)$};
      \node (BM) at (3,0) {$T\ast C$};
      \node (BR) at (5,0) {${C},$};
      \draw[->]  (TL) to node[auto,swap,labelsize] {$\cong$} (BL);
      \draw[->]  (TL) to node[auto,labelsize] {$m\ast C$} (TR);
      \draw[->]  (BL) to node[auto,labelsize] {$T\ast \gamma$} (BM);
      \draw[->]  (BM) to node[auto,labelsize] {$\gamma$} (BR);
      \draw[->]  (TR) to node[auto,labelsize](B) {$\gamma$} (BR);
\end{tikzpicture} 
\end{equation*}
where the arrows labelled with $\cong$ refer to the isomorphisms 
provided by the pseudo action.
\item Let $(C,\gamma)$ and $(C',\gamma')$ be models of $\monoid{T}$. A 
\defemph{homomorphism from $(C,\gamma)$ to 
$(C',\gamma')$}
is a morphism $f\colon C\longrightarrow C'$ in $\cat{C}$ making the 
following diagram commute:
\[
\begin{tikzpicture}[baseline=-\the\dimexpr\fontdimen22\textfont2\relax ]
      \node (TL) at (0,2)  {$T\ast C$};
      \node (TR) at (3,2)  {$T\ast C'$};
      \node (BL) at (0,0) {$C$};
      \node (BR) at (3,0) {$C'.$};
      \draw[->]  (TL) to node[auto,swap,labelsize] {$\gamma$} (BL);
      \draw[->]  (TL) to node[auto,labelsize] {$T\ast f$} (TR);
      \draw[->]  (BL) to node[auto,labelsize] {${f}$} (BR);
      \draw[->]  (TR) to node[auto,labelsize](B) {$\gamma'$} (BR);
\end{tikzpicture} 
\] 
\end{enumerate}
\end{definition}

We saw in Example~\ref{ex:non_symm_op_as_gen_op}
that non-symmetric operads arise as a special case of generalised operads.
In fact the definitions of models also agree, 
the models defined by the above definition (where $\monoid{S}$ is taken as in 
Example~\ref{ex:non_symm_op_as_gen_op}) being equivalent to the models as in  
Definition~\ref{def:ns_operad_model}, with $\cat{C}=\Set$ 
equipped with the cartesian monoidal structure.

\section{Metatheories and theories}
\label{sec:metatheories_theories}
In the previous section
we have seen several examples of notions of algebraic theory,
in which the corresponding types of {algebraic theories} are called
under various names, such as \emph{clones}, \emph{non-symmetric operads} and 
\emph{monads (on $\cat{C}$)}.
Being a background theory for a type of algebraic theories,
each notion of algebraic theory has definitions of algebraic theory, 
of model of an algebraic theory,
and of homomorphism between models.
Nevertheless, different notions of algebraic theory take 
different approaches to define these concepts, and the 
resulting definitions (say, of algebraic theory) can look quite remote.

As we have already mentioned, this paper introduces a 
\emph{unified framework for notions of algebraic theory}
which includes all of the notions
of algebraic theory reviewed in the previous section
as instances.
To the best of our knowledge, this is the first framework for notions of 
algebraic theory attaining such generality. 
Due to the diversity of notions of algebraic theory
we aim to capture, we take a very simple approach:
the basic idea is that we identify \emph{notions of algebraic theory}
with (large) \emph{monoidal categories},
and \emph{algebraic theories} with \emph{monoid objects} therein.


\begin{definition}
A \defemph{metatheory} is a large
monoidal category $\cat{M}=(\cat{M},I,\otimes)$.
%
\end{definition}

Metatheories are intended to formalise notions of algebraic theory.
We remark that, in this paper,
we leave the term \emph{notion of algebraic theory} informal
and will not give any mathematical definitions to it.

\begin{definition}
\label{def:theory}
Let $\cat{M}$ be a metatheory. 
A \defemph{theory in $\cat{M}$} is a monoid object
$\monoid{T}=(T,e,m)$ in $\cat{M}$.

We denote the category of theories in $\cat{M}$ by $\Th{\cat{M}}$,
which we define to be the same as
the category of monoid objects in $\cat{M}$ and homomorphisms between them. 
\end{definition}

Theories formalise what we have been calling \emph{algebraic theories}.

\medskip

The above definitions simply introduce aliases to well-known concepts.
Our hope is that, by using the terms which reflect our intention,
statements and discussions become easier to follow;
think of the terms such as \emph{generalised element} (which is synonymous
to morphism in a category) or \emph{map} (used by some authors to mean 
left adjoint in a bicategory)
which have been used with great benefit in the literature.

\subsection{The metatheory $[\F,\Set]$ for clones}
As a first example, we treat clones (Definition~\ref{def:clone}) within 
our framework.
That is, we seek a suitable metatheory 
for which clones are theories therein.

\begin{definition}
Let $\F$ be the category defined as follows:
\begin{itemize}
\item The set of objects is $\ob{\F}=\{\,[n]\mid n\in\NN\,\}$,
where for each natural number $n\in\NN$, $[n]$ is 
defined to be the $n$-element set $\{1,\dots,n\}$.
\item A morphism is any function between these sets.
\end{itemize}
\end{definition}
So the category $\F$ is a skeleton of the category $\FinSet$
of all (small) finite sets and functions.
The underlying category of the metatheory for clones 
is the category $[\F,\Set]$ of 
all functors from $\F$ to $\Set$ and natural transformations.
For $X\in[\F,\Set]$ and $[n]\in\F$,
we write the set $X([n])$ as $X_n$.

The monoidal structure on the category $[\F,\Set]$
we shall consider is known as the \emph{substitution monoidal 
structure}.
\begin{definition}[\cite{Kelly_Power_coeq,FPT99}]
The \defemph{substitution monoidal structure} on $[\F,\Set]$ is defined as 
follows:
\begin{itemize}
\item The unit object is $I=\F([1],-)\colon \F\longrightarrow\Set$.
\item Given $X,Y\colon \F\longrightarrow\Set$, their monoidal product 
$Y\otimes X\colon \F\longrightarrow\Set$ maps $[n]\in\F$ to 
\begin{equation}
\label{eqn:subst_tensor_F}
(Y\otimes X)_n=\int^{[k]\in\F} Y_k\times (X_n)^k.
\end{equation}
\end{itemize}
\end{definition}
The integral sign with a superscript in 
(\ref{eqn:subst_tensor_F}) stands for a \emph{coend}
(dually, we will denote an \emph{end} by the integral sign with a subscript);
see \cite[Section~IX.~6]{MacLane_CWM}.

\medskip

This defines a metatheory $[\F,\Set]$.
We claim that clones correspond to
theories in $[\F,\Set]$.
A theory in $[\F,\Set]$ consists of
\begin{itemize}
\item a functor $T\colon \F\longrightarrow \Set$;
\item a natural transformation $e\colon I\longrightarrow T$;
\item a natural transformation $m\colon T\otimes T\longrightarrow T$
\end{itemize}
satisfying the monoid axioms.
By the Yoneda lemma, $e$ corresponds to an element $\overline{e}\in T_1$,
and by the universality of coends, $m$ corresponds to
a natural transformation\footnote{In more detail, the relevant naturality
here may also be phrased as 
``natural in $[n]\in\F$ and \emph{extranatural} in $[k]\in\F$''; 
see \cite[Section~IX.~4]{MacLane_CWM}.
Following \cite{Kelly:enriched}, in this paper we shall not distinguish 
(terminologically)
extranaturality from naturality, using the latter term for both.}
\[
(\overline{m}_{n,k}\colon T_k\times (T_n)^k\longrightarrow T_n)_{n,k\in \NN}.
\]
Hence given a theory $(T,e,m)$ in $[\F,\Set]$, we can construct a 
clone with the underlying family of sets $(T_n)_{n\in\NN}$ by setting 
$p^{(n)}_i=T_\name{i}(\overline{e})$ (here, 
$\name{i}\colon[1]\longrightarrow[n]$
is the morphism in $\F$ defined as $\name{i}(1)=i$) and 
$\circ^{(n)}_k=\overline{m}_{n,k}$.
Conversely, given a clone $(T,p,\circ)$,
we can construct a theory in $[\F,\Set]$ as follows.
First we extend the family of sets $T$ 
to a functor $T\colon \F\longrightarrow\Set$
by setting, for any $u\colon [m]\longrightarrow [n]$ in $\F$,
\[
T_u (\theta)= \theta \circ^{(n)}_m (p^{(m)}_{u(1)},\dots, p^{(m)}_{u(m)}).
\]
Then we may set $\overline{e}=p^{(1)}_1$ and $\overline{m}_{n,k}=\circ^{(n)}_k$.

\begin{proposition}[Cf.~\cite{Curien_operad,Hyland_elts}]
The above constructions establish an isomorphism of categories 
between the category of clones and $\Th{[\F,\Set]}$.
\end{proposition}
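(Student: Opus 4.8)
The plan is to establish the isomorphism by verifying four things in turn: that the passage from theories to clones lands in clones, that the passage from clones to theories lands in theories, that the two passages are mutually inverse on objects, and that they carry clone homomorphisms to monoid homomorphisms and back, functorially.

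First I would set up the dictionary at the level of the transforms. Given a theory $(T,e,m)$ in $[\F,\Set]$, the Yoneda lemma identifies $e$ with an element $\overline e\in T_1$ and the universal property of the coend in $(\ref{eqn:subst_tensor_F})$ identifies $m$ with a family $(\overline m_{n,k}\colon T_k\times (T_n)^k\to T_n)_{n,k\in\NN}$, natural in $[n]$ and extranatural in $[k]$; all subsequent checks I would carry out in terms of $\overline e$ and $\overline m$. The three monoid laws then translate as follows. The left unit law becomes the single identity $\overline m_{n,1}(\overline e,\theta)=\theta$ for $\theta\in T_n$ (reducing the general coend component to its value at $[1]$ via extranaturality of $\overline m$). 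The right unit law becomes the identity
\[
T_u(\theta)=\overline m_{n,k}\bigl(\theta,\,T_{\name{u(1)}}(\overline e),\,\dots,\,T_{\name{u(k)}}(\overline e)\bigr)
\]
for every $u\colon[k]\to[n]$ in $\F$ and $\theta\in T_k$, and the associativity law becomes the clone axiom (CA3) after unwinding the relevant coends. The key conceptual point, which I expect to be the main obstacle, is exactly this last translation of the unit laws: it says that the functorial action of $T$ is \emph{forced} by the monoid data, and it is what makes the two round-trips work. Granting it, the clone $(T_n,p,\circ)$ built from $(T,e,m)$ satisfies the axioms: (CA1) follows from extranaturality of $\overline m$ along $\name j\colon[1]\to[n]$ together with the left unit identity, (CA2) follows from $T_{\id{[n]}}=\id{T_n}$ together with the right-unit identity, and (CA3) is the associativity translation.

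Second, starting from a clone $(T,p,\circ)$, I would check that the prescribed data define a theory. The assignment $T_u(\theta)=\theta\circ^{(n)}_{m}(p^{(n)}_{u(1)},\dots,p^{(n)}_{u(m)})$ on a morphism $u\colon[m]\to[n]$ is functorial: it preserves identities by (CA2), and preserves composites because, for $w\colon[l]\to[m]$ and $u\colon[m]\to[n]$, a single application of (CA3) followed by $l$ applications of (CA1) rewrites $T_u(T_w(\theta))$ as $\theta\circ^{(n)}_{l}(p^{(n)}_{u(w(1))},\dots,p^{(n)}_{u(w(l))})=T_{u\circ w}(\theta)$. Naturality of $e$ reduces to (CA1); naturality and extranaturality of $\overline m=\circ$ reduce to (CA3) and (CA1); the left unit law is (CA1) with $k=1$; the right unit law holds by the very definition of $T_u$; and associativity is (CA3). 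Hence $(T,e,m)\in\Th{[\F,\Set]}$.

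Finally, the two round-trips and the treatment of morphisms are direct. Theory $\to$ clone $\to$ theory: the underlying sets are unchanged, $\overline e$ is recovered as $T_{\id{[1]}}(\overline e)=\overline e$, $\overline m$ is recovered verbatim, and the recovered functorial action agrees with the original one precisely by the right-unit identity displayed above. Clone $\to$ theory $\to$ clone: $p^{(n)}_i$ is recovered as $T_{\name i}(\overline e)=p^{(n)}_i$ by (CA1), and $\circ$ is recovered as $\overline m=\circ$ by definition. For morphisms, a monoid homomorphism $f\colon(T,e,m)\to(T',e',m')$ amounts to a natural transformation with $f_{[1]}(\overline e)=\overline e'$ and $f_{[n]}\circ\overline m_{n,k}=\overline m'_{n,k}\circ(f_{[k]}\times f_{[n]}^{\,k})$; unwinding these against the functor structure shows they say exactly that $(f_{[n]})_{n\in\NN}$ preserves the $p$'s (using naturality of $f$) and the $\circ$'s, i.e.\ is a clone homomorphism, and conversely a clone homomorphism $h=(h_n)$ yields a natural transformation (naturality following from preservation of $p$ and $\circ$) satisfying both compatibilities. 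Since these assignments visibly preserve identities and composition and are mutually inverse, they assemble into the claimed isomorphism of categories between the category of clones and $\Th{[\F,\Set]}$.
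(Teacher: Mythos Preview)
The paper does not actually supply a proof of this proposition: it only describes the two constructions in the text preceding the statement and then cites the literature. Your proposal therefore goes well beyond what the paper offers, and the verification you outline is correct. The point you flag as the main obstacle --- that unwinding the right unit law through the co-Yoneda isomorphism $T\otimes I\cong T$ yields exactly
\[
T_u(\theta)=\overline m_{n,k}\bigl(\theta,\,T_{\name{u(1)}}(\overline e),\dots,T_{\name{u(k)}}(\overline e)\bigr),
\]
so that the functorial action of $T$ is forced by the monoid data --- is indeed the crux, and your use of it to close the theory $\to$ clone $\to$ theory round-trip is exactly what is needed. The remaining checks (extranaturality of $\overline m$ from (CA1) and (CA3), the equivalence of monoid associativity with (CA3), naturality of a clone homomorphism recovered from preservation of $p$ and $\circ$) are routine and your outline handles them correctly.
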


\subsection{The metatheory $[\Pcat,\Set]$ for symmetric operads}
Symmetric operads can be similarly
seen as theories in a suitable metatheory.
The main difference from the case of clones is that,
instead of the category $\F$, we use the following category.

\begin{definition}
\label{def:Pcat}
Let $\Pcat$ be the category defined as follows:
\begin{itemize}
\item The set of objects is the same as $\F$: $\ob{\Pcat}=\{\,[n]\mid 
n\in\NN\,\}$ where $[n]=\{1,\dots,n\}$.
\item A morphism is any \emph{bijective} function.
\end{itemize}
\end{definition}
So $\Pcat$ is the subcategory of $\F$ consisting of all isomorphisms.

Symmetric operads are
theories in a metatheory whose underlying category is 
the functor category $[\Pcat,\Set]$.
The monoidal structure on $[\Pcat,\Set]$ we shall use is 
also called the substitution monoidal structure.
\begin{definition}[\cite{Kelly_operad}]
\label{def:subst_monoidal_P}
The \defemph{substitution monoidal structure} on $[\Pcat,\Set]$ is defined as 
follows:
\begin{itemize}
\item The unit object is $I=\Pcat([1],-)\colon\Pcat\longrightarrow\Set$.
\item Given $X,Y\colon\Pcat\longrightarrow\Set$, their monoidal product 
$Y\otimes X\colon \Pcat\longrightarrow\Set$ maps $[n]\in\Pcat$ to
\begin{equation*}
(Y\otimes X)_n
=\int^{[k]\in\Pcat} 
Y_k\times 
(X^{\circledast k})_n,
\end{equation*}
where 
\[
(X^{\circledast k})_n=\int^{[n_1],\dots,[n_k]\in\Pcat} 
\Pcat([n_1+\dots+n_k],[n])\times 
X_{n_1}\times\dots\times X_{n_k}.
\]
\end{itemize}
\end{definition}

\begin{proposition}[Cf.~\cite{Curien_operad,Hyland_elts}]
The category of symmetric operads is isomorphic to $\Th{[\Pcat,\Set]}$.
\end{proposition}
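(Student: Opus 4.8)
The plan is to mimic the argument given for clones (Proposition before this one), adapting it to the groupoid $\Pcat$ and the more elaborate substitution monoidal structure of Definition~\ref{def:subst_monoidal_P}. A theory in $[\Pcat,\Set]$ consists of a functor $T\colon\Pcat\longrightarrow\Set$ together with $e\colon I\longrightarrow T$ and $m\colon T\otimes T\longrightarrow T$ satisfying the monoid axioms. First I would unwind the data: by the Yoneda lemma $e$ corresponds to an element $\overline{e}\in T_1$, and by the universal property of the coends defining $Y\otimes X$ (iterated: one coend over $[k]$ and, inside $X^{\circledast k}$, coends over $[n_1],\dots,[n_k]$ and a hom-set $\Pcat([n_1+\dots+n_k],[n])$), the multiplication $m$ corresponds to a family of functions, natural in all variables,
\[
\overline{m}_{k,n_1,\dots,n_k}\colon T_k\times T_{n_1}\times\dots\times T_{n_k}\longrightarrow T_{n_1+\dots+n_k}.
\]
The presence of $\Pcat([n_1+\dots+n_k],[n])$ in the coend means that after Yoneda reduction only the value at $[n]=[n_1+\dots+n_k]$ survives, which is exactly the arity bookkeeping in axiom (ND3) of Definition~\ref{def:non_symm_op}; but one must also record how a morphism $\sigma\in\Pcat([k],[k])$ and morphisms in $\Pcat([n_i],[n_i])$ act, and this is precisely the $\Sigma_k$-action (symmetric group action) that distinguishes symmetric operads from non-symmetric ones. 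So a symmetric operad should be recovered as: the family $(T_n)$, the functor structure giving each $T_n$ a right $\Sigma_n$-action, the identity $\id{}=\overline{e}\in T_1$, and composition $\circ=\overline{m}$, together with the equivariance conditions coming from naturality of $m$.

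Next I would set up the two constructions. Given a theory $(T,e,m)$, define a symmetric operad by the above translation; the operad axioms (NA1)--(NA3) plus equivariance follow by chasing the monoid axioms for $(T,e,m)$ through the coend calculus — the left and right unit laws give (NA1) and (NA2), associativity of $m$ gives (NA3), and the naturality (extranaturality) of $e$ and $m$ gives the $\Sigma_n$-equivariance of identity and composition. Conversely, given a symmetric operad, one extends $(T_n)$ to a functor $\Pcat\longrightarrow\Set$ using the symmetric group actions, sets $\overline{e}$ to be the operadic identity, and defines $m$ via the universal property of the coend from the operadic composition, checking it is well-defined (respects the coend identifications — this is where equivariance of composition is used) and satisfies the monoid axioms. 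One then checks these assignments are mutually inverse and functorial with respect to the respective notions of morphism (natural transformations commuting with $e,m$ versus symmetric operad homomorphisms), yielding the claimed isomorphism of categories.

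The main obstacle I anticipate is purely bookkeeping: correctly handling the nested coend defining $X^{\circledast k}$ and, within it, the hom-functor $\Pcat([n_1+\dots+n_k],[n])$, so that the Yoneda-type reductions are performed in the right order and the resulting natural transformation is seen to be exactly a $\Sigma$-equivariant family. In particular one must verify that the symmetry isomorphisms of $[\Pcat,\Set]$ are not needed — the monoidal structure here is not symmetric, and the monoid axioms only involve the associator and unitors — and that naturality in $[k]$, $[n_1],\dots,[n_k]$ and $[n]$ simultaneously encodes precisely the equivariance axioms of a symmetric operad, with no extra or missing conditions. Since the analogous statement for clones (using $\F$ in place of $\Pcat$, where morphisms are all functions rather than bijections) has already been asserted with the same proof scheme, and since these identifications are well known (the cited references \cite{Curien_operad,Hyland_elts}), I would present the argument in the same condensed style as the clone case rather than spelling out every diagram chase.
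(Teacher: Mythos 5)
Your proposal is correct and follows essentially the same route the paper takes: the paper gives no detailed proof here, relying on the cited references and on the same Yoneda/coend unwinding it sketched for the clone case, which is exactly the analogy you flesh out (with the extranaturality over $\Pcat$ correctly identified as the source of the equivariance axioms). No gaps.
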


\subsection{The metatheory $[\Ncat,\Set]$ for non-symmetric operads}
For non-symmetric operads (Definition~\ref{def:non_symm_op}), 
we use the following category.

\begin{definition}
\label{def:Ncat}
Let $\Ncat$ be the category defined as follows:
\begin{itemize}
\item The set of objects is the same as $\F$ and $\Pcat$.
\item There are only identity morphisms in $\Ncat$.
\end{itemize}
\end{definition}
$\Ncat$ is the discrete category with the same objects as $\F$ and $\Pcat$.
We again consider the functor category $[\Ncat,\Set]$.

\begin{definition}
The \defemph{substitution monoidal structure} on $[\Ncat,\Set]$ is defined as 
follows:
\begin{itemize}
\item The unit object is $I=\Ncat([1],-)\colon \Ncat\longrightarrow \Set$.
\item Given $X,Y\colon \Ncat\longrightarrow\Set$, their monoidal product
$Y\otimes X\colon\Ncat\longrightarrow\Set$ maps $[n]\in\Ncat$ to 
\[
(Y\otimes X)_n=\coprod_{[k]\in\Ncat} 
Y_k\times\bigg(\coprod_{\substack{[n_1],\dots,[n_k]\in\Ncat\\n_1+\dots+n_k=n}}
X_{n_1}\times\dots\times X_{n_k}\bigg).
\]
\end{itemize}
\end{definition}

\begin{proposition}[Cf.~\cite{Curien_operad,Hyland_elts}]
The category of non-symmetric operads is isomorphic to $\Th{[\Ncat,\Set]}$.
\end{proposition}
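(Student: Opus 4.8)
The strategy parallels the treatment of clones above, but is in fact simpler because $\Ncat$ is discrete. The plan is as follows. Since $\Ncat$ has only identity morphisms, a functor $X\colon\Ncat\longrightarrow\Set$ is precisely a family of sets $(X_n)_{n\in\NN}$, and a natural transformation between two such functors is precisely a family of functions; so, unlike in the case of clones, there is no functoriality data to reconstruct, and I would set up the correspondence directly on underlying families of sets. Thus a theory $\monoid{T}=(T,e,m)$ in $[\Ncat,\Set]$ has as its underlying datum a family $(T_n)_{n\in\NN}$, and conversely.

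Next I would unravel the structure maps. Since $I=\Ncat([1],-)$ has $I_1$ a one-element set and $I_n=\emptyset$ for $n\neq 1$, the Yoneda lemma (or direct inspection) identifies a map $e\colon I\longrightarrow T$ with an element $\id{}\in T_1$, giving the datum (ND2). For the multiplication, the definition of $Y\otimes X$ as an iterated coproduct, together with the universal property of coproducts, shows that a map $m\colon T\otimes T\longrightarrow T$ is the same as a family of functions
\[
m_{k;n_1,\dots,n_k}\colon T_k\times T_{n_1}\times\dots\times T_{n_k}\longrightarrow T_{n_1+\dots+n_k}
\]
indexed by $k,n_1,\dots,n_k\in\NN$, which is exactly the datum (ND3).

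Then I would check that, under this translation, the three monoid axioms for $(T,e,m)$ become the operad axioms (NA1)--(NA3): the right and left unit laws of a monoid, composed with the structural isomorphisms $T\otimes I\cong T\cong I\otimes T$ (which on the relevant summands are essentially identities), become (NA2) and (NA1) respectively, while the associativity square for $m$, once the associator of the substitution monoidal structure is written out explicitly, becomes (NA3). I would similarly verify that a homomorphism of monoids $\monoid{T}\longrightarrow\monoid{T}'$ is nothing but a family $(h_n\colon T_n\longrightarrow T'_n)_{n\in\NN}$ preserving $\id{}$ and $\circ$, i.e., a non-symmetric operad homomorphism. As all of these translations are visibly mutually inverse and compatible with identities and composition, they assemble into the desired isomorphism of categories.

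The only step requiring genuine care is making the associator of the substitution monoidal structure on $[\Ncat,\Set]$ explicit and matching it term by term with the bracketing in (NA3): both $((Z\otimes Y)\otimes X)_n$ and $(Z\otimes(Y\otimes X))_n$ decompose as coproducts indexed by two-level families of natural numbers summing to $n$, and one must check that the evident reindexing bijection really is the associativity constraint inherited from $\Set$. This is routine but is where the bookkeeping lives; everything else follows immediately from the discreteness of $\Ncat$ and the universal property of coproducts.
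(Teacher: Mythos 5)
Your proposal is correct and matches the approach the paper intends: the paper proves nothing new here but relies on the same unravelling it sketches for clones (Yoneda for the unit, the universal property of the coproduct/coend for the multiplication), which you reproduce in the simpler discrete setting, with the unit laws and associator matching (NA1)--(NA3) exactly as you describe. Your identification of which monoid unit law yields (NA1) versus (NA2) is consistent with the paper's convention that the first factor of $Y\otimes X$ acts as the outer operation.
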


\medskip

For unified studies of various substitution monoidal structures,
see \cite{Tanaka_Power_psd_dist,Fiore_et_al_rel_psdmnd}, as well as the 
aforementioned \cite{Curien_operad,Hyland_elts}.

\subsection{The metatheory $\MonCAT(\Ncat^\op\times\Ncat,\Set)$ for PROs}
In order to characterise PROs as theories in a metatheory,
we combine the following known facts.
\begin{itemize}
\item A PRO (Definition~\ref{def:PRO}) can be 
given equivalently as a small monoidal category $\monoid{T}$ equipped with
an identity-on-objects strict monoidal functor from $\Ncat$   
(Definition~\ref{def:Ncat}; the monoidal structure on $\Ncat$ is given by 
sum of natural numbers)
to $\monoid{T}$.
\item Fix a small category $\cat{C}$. 
A pair $(\cat{D},J)$ consisting of a small category 
$\cat{D}$ (having the same set of objects as $\cat{C}$) 
and an identity-on-objects functor 
$J\colon\cat{C}\longrightarrow\cat{D}$
is given equivalently as a monad on $\cat{C}$ in the bicategory of 
($\Set$-valued) profunctors,
that is to say as a monoid in the monoidal category of endo-profunctors on 
$\cat{C}$.
\end{itemize}
Hence it is natural to consider a monoidal version of endo-profunctors on
$\Ncat$.
Since the notion of profunctor (and its monoidal variant) will recur in 
this paper, let us fix our notation at this point.
As we shall later be more concerned with $\SET$-valued profunctors 
than $\Set$-valued ones (cf.~Section~\ref{sec:foundational_convention}),
we consider the former as default.
\begin{definition}[\cite{Benabou:dist-at-work,Lawvere_metric}]
\label{def:profunctor}
We define the bicategory $\PROF$ as follows.
\begin{itemize}
\item An object is a large category.
\item A 1-cell from $\cat{A}$ to $\cat{B}$ is a \defemph{profunctor from 
$\cat{A}$ to $\cat{B}$}, which we define to be 
a functor 
\[
H\colon \cat{B}^\op\times\cat{A}\longrightarrow\SET.
\]
We write $H\colon \cat{A}\pto\cat{B}$ if $H$ is a profunctor from $\cat{A}$
to $\cat{B}$.
The identity 1-cell on a large category $\cat{C}$
is the hom-functor $\cat{C}(-,-)$.
Given profunctors $H\colon \cat{A}\pto\cat{B}$ and $K\colon \cat{B}\pto\cat{C}$,
their composite 
$K\ptensor H\colon\cat{A}\pto\cat{C}$ maps $(C,A)\in\cat{C}^\op\times\cat{A}$
to 
\begin{equation}
\label{eqn:profunctor_composition}
(K\ptensor H)(C,A)=\int^{B\in\cat{B}} K(C,B)\times H(B,A).
\end{equation}
\item A 2-cell from $H$ to $H'$, both from $\cat{A}$ to $\cat{B}$,
is a natural transformation $\alpha\colon H\Longrightarrow H'
\colon\cat{B}^\op\times\cat{A}\longrightarrow\SET$. 
\end{itemize}
\end{definition}

Between small categories we also consider \defemph{$\Set$-valued 
profunctors}, defined by replacing $\SET$ by $\Set$ in the above definition of
profunctors.

\begin{definition}[Cf.~\cite{Im_Kelly}]
\label{def:monoidal_profunctor}
Let $\cat{M}=(\cat{M},I_\cat{M},\otimes_\cat{M})$ and 
$\cat{N}=(\cat{N},I_\cat{N},\otimes_\cat{N})$ be large monoidal categories.
A \defemph{monoidal profunctor} from $\cat{M}$ to 
$\cat{N}$ is a lax monoidal functor
\[
H=(H,h_\cdot,h)\colon \cat{N}^\op\times \cat{M}\longrightarrow\SET,
\]
where $\SET$ is regarded as a monoidal category 
via the cartesian monoidal structure.
In detail, it consists of:
\begin{itemize}
\item a functor $H\colon \cat{N}^\op\times\cat{M}\longrightarrow\SET$;
\item a function $h_\cdot\colon 1\longrightarrow H(I_\cat{N},I_\cat{M})$,
where $1$ is a singleton;
\item a natural transformation 
$
h=(h_{N,N',M,M'}\colon H(N',M')\times H(N,M)\longrightarrow H(N'\otimes_\cat{N}
N,M'\otimes_\cat{M} M))_{N,N'\in\cat{N},M,M'\in\cat{M}}
$
\end{itemize}
satisfying the suitable coherence axioms.
\end{definition}

One can compose monoidal profunctors by the formula 
(\ref{eqn:profunctor_composition}), giving rise to the bicategory of 
large monoidal categories, monoidal profunctors, and monoidal natural 
transformations; 
we shall use this in Section~\ref{sec:morphism_metatheory}.
Between small monoidal categories, one can define 
\defemph{$\Set$-valued monoidal profunctors} and their compositions.
Corresponding to the aforementioned fact about profunctors,
it can be shown that for any small monoidal category $\cat{M}$,
to give a monad on $\cat{M}$ in the bicategory of $\Set$-valued monoidal 
profunctors
is equivalent to give a small monoidal category $\cat{N}$ (having the same set
of objects as $\cat{M}$) together with an identity-on-objects
\emph{strict} monoidal functor $J\colon \cat{M}\longrightarrow\cat{N}$.
It follows that PROs correspond to theories in the metatheory of 
$\Set$-valued monoidal endo-profunctors on $\Ncat$, whose underlying category
is $\MonCAT(\Ncat^\op\times\Ncat,\Set)$, the category of lax monoidal 
functors of type $\Ncat^\op\times \Ncat\longrightarrow\Set$ and monoidal 
natural transformations.

\subsection{The metatheory $\SymMonCAT(\Pcat^\op\times\Pcat,\Set)$ for PROPs}
The case of PROPs is a straightforward adaptation of that of PROs.
For small symmetric monoidal categories $\cat{M}$ and $\cat{N}$,
we define a \defemph{$\Set$-valued 
symmetric monoidal profunctor} from $\cat{M}$ to $\cat{N}$
to be a lax symmetric monoidal functor of type 
$
\cat{N}^\op\times \cat{M}\longrightarrow\Set.
$
Similarly to $\Set$-valued monoidal profunctors,
$\Set$-valued symmetric monoidal profunctors compose and 
form a bicategory, a monad therein (say, on $\cat{M}$) being equivalent to 
a small symmetric monoidal category
equipped with an identity-on-objects strict symmetric monoidal functor
from $\cat{M}$.

We can endow the category $\Pcat$ (Definition~\ref{def:Pcat})
with the natural structure of a symmetric strict monoidal category.
As a PROP (Definition~\ref{def:PROP}) can be defined equivalently as a small 
symmetric monoidal category equipped with an identity-on-objects 
strict symmetric monoidal functor from $\Pcat$, 
we conclude that PROPs correspond to theories in 
the metatheory of $\Set$-valued symmetric monoidal endo-profunctors
on $\Pcat$, whose underlying category is 
$\SymMonCAT(\Pcat^\op\times\Pcat,\Set)$,
the category of lax symmetric monoidal functors of type 
$\Pcat^\op\times\Pcat\longrightarrow\Set$ and monoidal natural transformations.

\subsection{The metatheories for monads and generalised operads}
Finally, we recall that monads 
and generalised operads were introduced as monoid objects in the first place.
Hence for a large category $\cat{C}$, 
the metatheory for monads on $\cat{C}$ is $[\cat{C},\cat{C}]$ (with the 
composition as monoidal product), whereas 
for a large category $\cat{C}$ with finite limits and a cartesian monad 
$\monoid{S}$ thereon, the metatheory for $\monoid{S}$-operads is $\cat{C}/S1$
(with the monoidal structure defined earlier).

\section{Notions of model as enrichments}
\label{sec:enrichment}
Let us proceed to incorporate semantical aspects of notions of algebraic theory
into our framework.
We start with a discussion on \emph{notions of model}.
An important feature of several notions of algebraic 
theory---especially clones, symmetric operads, non-symmetric 
operads, PROPs and PROs---is that we may consider 
models of an algebraic theory in more than one category.
For example, 
models of a clone can be taken in any locally small category with 
finite products (Definition~\ref{def:clone_model}).
We may phrase this fact by saying that clones admit multiple {notions of 
model}, one for each locally small category with finite products.

Informally, a \emph{notion of model} for a notion of algebraic theory 
is a definition of model of an algebraic theory in that notion of algebraic 
theory.
Hence whenever we consider actual \emph{models} of an algebraic theory, we 
must specify in advance a notion of model with respect to which the models are 
taken.
Our framework emphasises the inevitable fact that
\emph{models are always relative to notions of model},
by treating notions of model explicitly as mathematical structures.

But how can we formalise such notions of model?
Below we show that the standard notions of model for clones, symmetric operads, 
non-symmetric operads, PROPs and PROs can be captured by a categorical structure
which we call \emph{enrichment}.

\begin{definition}[Cf.~{\cite[Definition 2.1]{Campbell_skew}}]
\label{def:enrichment}
Let $\cat{M}=(\cat{M},I,\otimes)$ be a metatheory.
An \defemph{enrichment over $\cat{M}$} consists of:
\begin{itemize}
\item  a large category $\cat{C}$;
\item a functor 
$\enrich{-}{-}\colon\cat{C}^\op\times\cat{C}\longrightarrow\cat{M}$;
\item a natural transformation $(j_C\colon 
I\longrightarrow\enrich{C}{C})_{C\in\cat{C}}$;
\item a natural transformation 
$(M_{A,B,C}\colon\enrich{B}{C}\otimes\enrich{A}{B}
\longrightarrow\enrich{A}{C})_{A,B,C\in\cat{C}}$,
\end{itemize}
making the following diagrams commute for all $A,B,C,D\in\cat{C}$:
\begin{equation*}
\begin{tikzpicture}[baseline=-\the\dimexpr\fontdimen22\textfont2\relax ]
      \node (TL) at (0,1)  {$I\otimes\enrich{A}{B}$};
      \node (TR) at (4,1)  {$\enrich{B}{B}\otimes\enrich{A}{B}$};
      \node (BR) at (4,-1) {$\enrich{A}{B}$};
      \draw[->] (TL) to node[auto,labelsize](T) {$j_B\otimes\enrich{A}{B}$} 
      (TR);
      \draw[->] (TR) to node[auto,labelsize](R) {$M_{A,B,B}$} (BR);
      \draw[->]  (TL) to node[auto,labelsize,swap] (L) {$\cong$} (BR);
\end{tikzpicture} 
\quad
\begin{tikzpicture}[baseline=-\the\dimexpr\fontdimen22\textfont2\relax ]
      \node (TL) at (0,1)  {$\enrich{A}{B}\otimes I$};
      \node (TR) at (4,1)  {$\enrich{A}{B}\otimes\enrich{A}{A}$};
      \node (BR) at (4,-1) {$\enrich{A}{B}$};
      \draw[->] (TL) to node[auto,labelsize](T) {$\enrich{A}{B}\otimes j_A$} 
      (TR);
      \draw[->] (TR) to node[auto,labelsize](R) {$M_{A,A,B}$} (BR);
      \draw[->]  (TL) to node[auto,labelsize,swap] (L) {$\cong$} (BR);
\end{tikzpicture} 
\end{equation*}
\begin{equation*}
\begin{tikzpicture}[baseline=-\the\dimexpr\fontdimen22\textfont2\relax ]
      \node (TL) at (0,1)  {$(\enrich{C}{D}\otimes \enrich{B}{C})\otimes 
      \enrich{A}{B}$};
      \node (TR) at (9,1)  {$\enrich{B}{D}\otimes\enrich{A}{B}$};
      \node (BL) at (0,-1) 
      {$\enrich{C}{D}\otimes(\enrich{B}{C}\otimes\enrich{A}{B})$};
      \node (BM) at (6,-1) {$\enrich{C}{D}\otimes\enrich{A}{C}$};
      \node (BR) at (9,-1) {$\enrich{A}{D}.$};
      \draw[->] (TL) to node[auto,labelsize](T) 
      {$M_{B,C,D}\otimes\enrich{A}{B}$} 
      (TR);
      \draw[->]  (TR) to node[auto,labelsize] {$M_{A,B,D}$} (BR);
      \draw[->]  (TL) to node[auto,swap,labelsize] {$\cong$} (BL);
      \draw[->]  (BL) to node[auto,labelsize] {$\enrich{C}{D}\otimes 
      M_{A,B,C}$} (BM);
      \draw[->] (BM) to node[auto,labelsize](B) {$M_{A,C,D}$} (BR);
\end{tikzpicture} 
\end{equation*}
We say that $(\cat{C},\enrich{-}{-},j,M)$ is an enrichment over $\cat{M}$,
or that $(\enrich{-}{-},j,M)$ is an enrichment of $\cat{C}$ over $\cat{M}$.
\end{definition}

An enrichment over $\cat{M}$ is 
not the same as a (large) \emph{$\cat{M}$-category} in enriched category 
theory~\cite{Kelly:enriched}.
It is rather a triple consisting of a large category $\cat{C}$, 
a large $\cat{M}$-category $\cat{D}$
and an identity-on-objects functor $J\colon \cat{C}\longrightarrow\cat{D}_0$,
where $\cat{D}_0$ is the underlying category of $\cat{D}$.

In more detail, given an enrichment $(\enrich{-}{-},j,M)$ of $\cat{C}$ in 
$\cat{M}$,
we may define the $\cat{M}$-category $\cat{D}$ with $\ob{\cat{D}}=\ob{\cat{C}}$
using the data $(\enrich{-}{-},j,M)$ of the enrichment, by setting
$\cat{D}(A,B)=\enrich{A}{B}$ and so on.
The identity-on-objects functor $J\colon \cat{C}\longrightarrow\cat{D}_0$
may be defined by mapping a morphism $f\colon A\longrightarrow B$
in $\cat{C}$ to
the composite $\enrich{A}{f}\circ j_A$, or equivalently, $\enrich{f}{B}\circ 
j_B$:
\[
\begin{tikzpicture}[baseline=-\the\dimexpr\fontdimen22\textfont2\relax ]
      \node (TL) at (0,2)  {$I$};
      \node (TR) at (3,2)  {$\enrich{B}{B}$};
      \node (BL) at (0,0) {$\enrich{A}{A}$};
      \node (BR) at (3,0) {$\enrich{A}{B}.$};
      \draw[->]  (TL) to node[auto,swap,labelsize] {$j_A$} (BL);
      \draw[->]  (TL) to node[auto,labelsize] {$j_B$} (TR);
      \draw[->]  (BL) to node[auto,labelsize] {$\enrich{A}{f}$} (BR);
      \draw[->]  (TR) to node[auto,labelsize](B) {$\enrich{f}{B}$} (BR);
\end{tikzpicture} 
\]
See \cite[Section 2]{Campbell_skew} and \cite[Section~3.1.2]{Fujii_thesis} for more about
the relationship to enriched category theory.

\medskip

From an enrichment, we now derive a definition of model 
of a theory.
First observe that, given an enrichment $\enrich{-}{-}=(\enrich{-}{-},j,M)$ of 
a large category $\cat{C}$ over a metatheory $\cat{M}$
and an object $C\in\cat{C}$,
we have a theory $\End_{\enrich{-}{-}}(C)=(\enrich{C}{C},j_C,M_{C,C,C})$
in $\cat{M}$.

\begin{definition}
\label{def:enrich_model}
Let $\cat{M}=(\cat{M},I,\otimes)$ be a metatheory, 
$\monoid{T}=(T,e,m)$ be a theory in $\cat{M}$, 
$\cat{C}$ be a large category, and $\enrich{-}{-}=(\enrich{-}{-},j,M)$ be an
enrichment of $\cat{C}$ over $\cat{M}$.
\begin{enumerate}
\item A \defemph{model of $\monoid{T}$ in $\cat{C}$ with respect to 
$\enrich{-}{-}$}
is a pair $(C,\chi)$ consisting of an object $C$ of $\cat{C}$
and a theory homomorphism (= monoid homomorphism) $\chi\colon 
\monoid{T}\longrightarrow\End_\enrich{-}{-}(C)$; that is, 
a morphism $\chi\colon T\longrightarrow \enrich{C}{C}$
in $\cat{M}$ making the following diagrams commute:
\begin{equation*}
\begin{tikzpicture}[baseline=-\the\dimexpr\fontdimen22\textfont2\relax ]
      \node (TL) at (0,2)  {$I$};
      \node (TR) at (2,2)  {$T$};
      \node (BR) at (2,0) {$\enrich{C}{C}$};
      \draw[->]  (TL) to node[auto,labelsize] {$e$} (TR);
      \draw[->]  (TR) to node[auto,labelsize] {$\chi$} (BR);
      \draw[->] (TL) to node[auto,swap,labelsize](B) {$j_C$} (BR);
\end{tikzpicture} 
\qquad
\begin{tikzpicture}[baseline=-\the\dimexpr\fontdimen22\textfont2\relax ]
      \node (TL) at (0,2)  {$T\otimes T$};
      \node (TR) at (4,2)  {$T$};
      \node (BL) at (0,0) {$\enrich{C}{C}\otimes\enrich{C}{C}$};
      \node (BR) at (4,0) {$\enrich{C}{C}.$};
      \draw[->]  (TL) to node[auto,swap,labelsize] {$\chi\otimes\chi$} (BL);
      \draw[->]  (TL) to node[auto,labelsize] {$m$} (TR);
      \draw[->]  (BL) to node[auto,labelsize] {$M_{C,C,C}$} (BR);
      \draw[->]  (TR) to node[auto,labelsize](B) {$\chi$} (BR);
\end{tikzpicture} 
\end{equation*}
\item Let $(C,\chi)$ and $(C',\chi')$ be models of $\monoid{T}$ in $\cat{C}$
with respect to $\enrich{-}{-}$. A \defemph{homomorphism from $(C,\chi)$ to 
$(C',\chi')$}
is a morphism $f\colon C\longrightarrow C'$ in $\cat{C}$ making the 
following diagram commute:
\[
\begin{tikzpicture}[baseline=-\the\dimexpr\fontdimen22\textfont2\relax ]
      \node (TL) at (0,2)  {$T$};
      \node (TR) at (3,2)  {$\enrich{C'}{C'}$};
      \node (BL) at (0,0) {$\enrich{C}{C}$};
      \node (BR) at (3,0) {$\enrich{C}{C'}.$};
      \draw[->]  (TL) to node[auto,swap,labelsize] {$\chi$} (BL);
      \draw[->]  (TL) to node[auto,labelsize] {$\chi'$} (TR);
      \draw[->]  (BL) to node[auto,labelsize] {$\enrich{C}{f}$} (BR);
      \draw[->]  (TR) to node[auto,labelsize](B) {$\enrich{f}{C'}$} (BR);
\end{tikzpicture} 
\] 
\end{enumerate}
We denote the (large) category of models of $\monoid{T}$ in $\cat{C}$ with 
respect to 
$\enrich{-}{-}$ by $\Mod{\monoid{T}}{(\cat{C},\enrich{-}{-})}$.
\end{definition}

The above definitions of model and homomorphism are reminiscent of 
ones for clones (Definitions~\ref{def:clone_model} and \ref{def:clone_mod_hom}),
symmetric operads, non-symmetric operads
(Definition~\ref{def:ns_operad_model}),
PROPs and PROs (Definition~\ref{def:PRO_model}).
Indeed, we can recover the standard notions of model
for these notions of algebraic theory via suitable enrichments.

\begin{example}
\label{ex:clone_enrichment}
Recall that the metatheory for clones is $[\F,\Set]$ with the substitution 
monoidal structure.
Let $\cat{C}$ be a locally small\footnote{Note that according to
our convention (Definition~\ref{conv:size}), ``locally small'' implies 
``large''.} 
category with all finite products
(or more generally, with all finite powers).
We have an enrichment of $\cat{C}$ over $[\F,\Set]$ defined as follows:
\begin{itemize}
\item The functor $\enrich{-}{-}\colon \cat{C}^\op\times\cat{C}
\longrightarrow[\F,\Set]$
maps $A,B\in\cat{C}$ and $[n]\in\F$ to the set 
\[
\enrich{A}{B}_n=\cat{C}(A^n,B).
\]
\item The natural transformation $(j_C\colon I\longrightarrow 
\enrich{C}{C})_{C\in\cat{C}}$ corresponds by the Yoneda lemma
(recall that $I=\F([1],-)$) to the family 
\[
(\overline{j}_C=\id{C}\in \enrich{C}{C}_1)_{C\in\cat{C}}.
\] 
\item The natural transformation $(M_{A,B,C}\colon \enrich{B}{C}\otimes
\enrich{A}{B}\longrightarrow\enrich{A}{C})_{A,B,C\in\cat{C}}$
corresponds by the universality of coends (recall
that $(Y\otimes X)_{n}=\int^{[k]\in\F} Y_k\times (X_n)^k$) to 
the family
\[
(\overline{M}_{A,B,C})_{n,k}\colon \enrich{B}{C}_k\times (\enrich{A}{B}_n)^k
\longrightarrow\enrich{A}{B}_n
\]
mapping $(g,f_1,\dots,f_k)\in \cat{C}(B^k,C)\times \cat{C}(A^n,B)^k$
to $g\circ\langle f_1,\dots,f_k\rangle\in\cat{C}(A^n,B)$.
\end{itemize}

Clearly the definition of the clone $\End_\cat{C}(C)$ from an object
$C\in\cat{C}$ 
(Definition~\ref{ex:endoclone}) is derived from the above enrichment.
Consequently, we recover the classical definitions of model 
(Definition~\ref{def:clone_model})
and homomorphism between models (Definition~\ref{def:clone_mod_hom}) for clones,
as instances of Definition~\ref{def:enrich_model}.
\end{example}

\begin{example}
The metatheory for symmetric operads is $[\Pcat,\Set]$ with the substitution 
monoidal structure.
Let $\cat{C}=(\cat{C},I',\otimes')$ 
be a locally small symmetric monoidal category.
We have an enrichment of $\cat{C}$ over $[\Pcat,\Set]$ defined as follows:
\begin{itemize}
\item The functor $\enrich{-}{-}\colon \cat{C}^\op\times\cat{C}
\longrightarrow[\Pcat,\Set]$
maps $A,B\in\cat{C}$ and $[n]\in\Pcat$ to the set 
\[
\enrich{A}{B}_n=\cat{C}(A^{\otimes' n},B),
\]
where $A^{\otimes' n}$ is the monoidal product of $n$ many copies of $A$.
\item The natural transformation $(j_C\colon I\longrightarrow 
\enrich{C}{C})_{C\in\cat{C}}$ corresponds by the Yoneda lemma
(recall that $I=\Pcat([1],-)$) to the family 
\[
(\overline{j}_C=\id{C}\in \enrich{C}{C}_1)_{C\in\cat{C}}.
\] 
\item The natural transformation $(M_{A,B,C}\colon \enrich{B}{C}\otimes
\enrich{A}{B}\longrightarrow\enrich{A}{C})_{A,B,C\in\cat{C}}$
corresponds by the universality of coends (recall 
Definition~\ref{def:subst_monoidal_P}) to 
the family 
\begin{multline*}
(\overline{M}_{A,B,C})_{n,k,n_1,\dots,n_k}\colon 
\enrich{B}{C}_k\times \Pcat([n_1+\dots+n_k],[n])\\ \times
\enrich{A}{B}_{n_1}\times\dots\times\enrich{A}{B}_{n_k}
\longrightarrow\enrich{A}{B}_n,
\end{multline*}
which is the unique function from the empty set if $n\neq n_1+\dots+n_k$
and, if $n= n_1+\dots+n_k$, maps 
$(g,u,f_1,\dots,f_k)\in \cat{C}(B^{\otimes' k},C)\times 
\Pcat([n_1+\dots+n_k],[n])\times \cat{C}(A^{\otimes' n_1},B)
\times\dots\times \cat{C}(A^{\otimes' n_k},B)$
to $g\circ (f_1\otimes'\dots\otimes' f_k)\circ A^{\otimes' u}$.
\end{itemize}

Via the above enrichment, we recover the classical definitions of model and 
homomorphism between models for symmetric operads.
\end{example}

\begin{example}
The metatheory for non-symmetric operads is $[\Ncat,\Set]$ with the 
substitution monoidal structure.
Let $\cat{C}=(\cat{C},I',\otimes')$ 
be a locally small monoidal category.
We have an enrichment of $\cat{C}$ over $[\Ncat,\Set]$ which is
similar to, and simpler than, the one in the previous example.
\end{example}

\begin{example}
The metatheory for PROPs is $\SymMonCAT(\Pcat^\op\times\Pcat,\Set)$
with the monoidal structure given by composition of $\Set$-valued 
symmetric monoidal profunctors.
Let $\cat{C}=(\cat{C},I',\otimes')$ be a locally small symmetric monoidal 
category.
We have an enrichment of $\cat{C}$ over $\SymMonCAT(\Pcat^\op\times\Pcat,\Set)$
defined as follows:
\begin{itemize}
\item The functor $\enrich{-}{-}\colon \cat{C}^\op\times\cat{C}
\longrightarrow\SymMonCAT(\Pcat^\op\times\Pcat,\Set)$
maps $A,B\in\cat{C}$ and $[m],[n]\in\Pcat$ to the set
\[
\enrich{A}{B}([m],[n])=\cat{C}(A^{\otimes' n},B^{\otimes' m}),
\]
with the evident lax symmetric monoidal structure
on $\enrich{A}{B}$.
\item The natural transformation $(j_C\colon \Pcat(-,-)\longrightarrow
\enrich{C}{C})$ is determined by the functoriality of $\enrich{C}{C}$.
\item The natural transformation $(M_{A,B,C}\colon \enrich{B}{C}\ptensor
\enrich{A}{B}\longrightarrow\enrich{A}{C})_{A,B,C\in\cat{C}}$
corresponds by the universality of coends to the family
\[
(\overline{M}_{A,B,C})_{l,m,n}\colon \enrich{B}{C}([l],[m])\times
\enrich{A}{B}([m],[n])\longrightarrow\enrich{A}{C}([l],[n])
\]
defined by composition in $\cat{C}$.
\end{itemize}

This enrichment captures the classical definition of models of PROPs.
\end{example}

\begin{example}
The metatheory for PROs is $\MonCAT(\Ncat^\op\times\Ncat,\Set)$ with the 
monoidal structure given by composition of $\Set$-valued monoidal profunctors.
Let $\cat{C}=(\cat{C},I',\otimes')$ 
be a locally small monoidal category.
We have an enrichment of $\cat{C}$ over $\MonCAT(\Ncat^\op\times\Ncat,\Set)$ 
which is similar to, and simpler than, the one in the previous example.
\end{example}

\begin{example}
\label{ex:relative_alg}
We may also consider infinitary variants of Example~\ref{ex:clone_enrichment}.
Here we take an extreme. 
Let $\cat{C}$ be a locally small category with all small powers. 
Then we obtain an enrichment of $\cat{C}$ over $[\Set,\Set]$,
the metatheory for monads on $\Set$.
\begin{itemize}
\item The functor $\enrich{-}{-}\colon \cat{C}^\op\times 
\cat{C}\longrightarrow[\Set,\Set]$ maps $A,B\in\cat{C}$
and $X\in \Set$ to the set 
\[
\enrich{A}{B}(X)=\cat{C}(A^X,B),
\]
where $A^X$ is the $X$-th power of $A$.
\item The natural transformation $(j_C\colon \id{\Set}\longrightarrow 
\enrich{C}{C})_{C\in\cat{C}}$ corresponds by the Yoneda lemma
(note that $\id{\Set}\cong\Set(1,-)$, where $1$ is a singleton)
to the family
\[
(\overline{j}_C=\id{C}\in\enrich{C}{C}(1))_{C\in\cat{C}}.
\]
\item The natural transformation $(M_{A,B,C}\colon 
\enrich{B}{C}\circ\enrich{A}{B}\longrightarrow\enrich{A}{C})_{A,B,C\in\cat{C}}$
has the $X$-th component ($X\in\Set$)
\[
\enrich{B}{C}\circ\enrich{A}{B}(X)=\cat{C}(B^{\cat{C}(A^X,B)},C)\longrightarrow
\cat{C}(A^X,C)=\enrich{A}{C}(X)
\]
the function induced from the canonical morphism $A^X\longrightarrow 
B^{\cat{C}(A^X,B)}$ in $\cat{C}$.
\end{itemize}

This enrichment gives us a definition of model of a monad $\monoid{T}$
on $\Set$ in $\cat{C}$. 
To spell this out, first note that 
for any object $C\in\cat{C}$, the functor $\enrich{C}{C}\colon 
\Set\longrightarrow\Set$ which maps $X\in\Set$ to $\cat{C}(C^X,C)$
acquires the monad structure, giving rise to the monad $\End_{\enrich{-}{-}}(C)$
on $\Set$.
A model of $\monoid{T}$ is then an object $C\in\cat{C}$ together with
a monad morphism $\monoid{T}\longrightarrow\End_{\enrich{-}{-}}(C)$.
This is the definition of \emph{relative algebra} of a monad on $\Set$ by 
Hino, Kobayashi, Hasuo and Jacobs~\cite{Hino}.
As noted in \cite{Hino}, in the case where $\cat{C}=\Set$,
relative algebras of a monad $\monoid{T}$ on $\Set$ agree with Eilenberg--Moore 
algebras of $\monoid{T}$; we shall later show this fact
in Example~\ref{ex:monad_on_cat_w_powers}.
\end{example}

\begin{example}
Let $\cat{S}$ be a large category and consider the metatheory 
$[\cat{S},\cat{S}]$ of  monads on $\cat{S}$.
Then an enrichment over $[\cat{S},\cat{S}]$ is the same thing as 
an \emph{$\cat{S}$-parameterised monad} (without strength) in the sense of 
Atkey~\cite[Definition~1]{Atkey_parameterised}, introduced
in the study of computational effects.
\end{example}

\medskip

Having reformulated semantics of notions of algebraic theory
in terms of enrichments, let us 
investigate some of its immediate consequences.

\subsection{$\Mod{-}{-}$ as a 2-functor}
It is well-known that given clones $\monoid{T}$ and $\monoid{T'}$,
a clone homomorphism $f\colon \monoid{T}\longrightarrow\monoid{T'}$,
and a locally small category $\cat{C}$ with finite products, 
we have the induced functor 
\[
\Mod{f}{\cat{C}}\colon\Mod{\monoid{T'}}{\cat{C}}\longrightarrow
\Mod{\monoid{T}}{\cat{C}}
\]
between the categories of models.
An equally widely known phenomenon is that
given a clone $\monoid{T}$, locally small 
categories $\cat{C}$ and 
$\cat{C'}$ with finite products, and a functor $G\colon\cat{C}\longrightarrow 
\cat{C'}$ preserving finite products, we obtain a functor 
\[
\Mod{\monoid{T}}{G}\colon\Mod{\monoid{T}}{\cat{C}}\longrightarrow
\Mod{\monoid{T}}{\cat{C'}}.
\]
In order to capture such functoriality of $\Mod{-}{-}$,
we introduce a 2-category of enrichments.

\begin{definition}[{Cf.~\cite[Definitions 2.9 and 2.10]{Campbell_skew}}]
\label{def:2-cat_of_enrichments}
Let $\cat{M}=(\cat{M},I,\otimes)$ be a metatheory.
The (locally large) 2-category $\Enrich{\cat{M}}$ of enrichments over $\cat{M}$ 
is defined as follows:
\begin{itemize}
\item An object is an enrichment $(\cat{C},\enrich{-}{-},j,M)$ over $\cat{M}$.
\item A 1-cell from $(\cat{C},\enrich{-}{-},j,M)$ to
$(\cat{C'},\enrich{-}{-}',j',M')$ is a functor 
$G\colon\cat{C}\longrightarrow\cat{C'}$ together with a natural transformation
$(g_{A,B}\colon\enrich{A}{B}\longrightarrow\enrich{GA}{GB}')_{A,B\in\cat{C}}$ 
making the 
following diagrams commute for all $A,B,C\in\cat{C}$:
\begin{equation*}
\begin{tikzpicture}[baseline=-\the\dimexpr\fontdimen22\textfont2\relax ]
      \node (TL) at (0,1)  {$I$};
      \node (TR) at (4,1)  {$\enrich{C}{C}$};
      \node (BR) at (4,-1) {$\enrich{GC}{GC}'$};
      \draw[->] (TL) to node[auto,labelsize](T) {$j_C$} (TR);
      \draw[->] (TR) to node[auto,labelsize](R) {$g_{C,C}$} (BR);
      \draw[->]  (TL) to node[auto,labelsize,swap] (L) {$j'_{GC}$} 
      (BR);
\end{tikzpicture} 
\end{equation*}
\begin{equation*}
\begin{tikzpicture}[baseline=-\the\dimexpr\fontdimen22\textfont2\relax ]
      \node (TL) at (0,1)  {$\enrich{B}{C}\otimes\enrich{A}{B}$};
      \node (TR) at (6,1)  {$\enrich{A}{C}$};
      \node (BL) at (0,-1) {$\enrich{GB}{GC}'\otimes\enrich{GA}{GB}'$};
      \node (BR) at (6,-1) {$\enrich{GA}{GC}'.$};
      \draw[->] (TL) to node[auto,labelsize](T) {$M_{A,B,C}$} 
      (TR);
      \draw[->]  (TR) to node[auto,labelsize] {$g_{A,C}$} (BR);
      \draw[->]  (TL) to node[auto,swap,labelsize] {$g_{B,C}\otimes g_{A,B}$} 
      (BL);
      \draw[->]  (BL) to node[auto,labelsize] {$M'_{GA,GB,GC}$} (BR);
\end{tikzpicture} 
\end{equation*}
\item A 2-cell from $(G,g)$ to $(G',g')$, both from 
$(\cat{C},\enrich{-}{-},j,M)$ to $(\cat{C'},\enrich{-}{-}',j',$ $M')$, is a 
natural transformation $\theta\colon G\Longrightarrow G'$
making the following diagram commute for all $A,B\in\cat{C}$:
\begin{equation*}
\begin{tikzpicture}[baseline=-\the\dimexpr\fontdimen22\textfont2\relax ]
      \node (TL) at (0,2)  {$\enrich{A}{B}$};
      \node (TR) at (4,2)  {$\enrich{GA}{GB}'$};
      \node (BL) at (0,0) {$\enrich{G'A}{G'B}'$};
      \node (BR) at (4,0) {$\enrich{GA}{G'B}'.$};
      \draw[->] (TL) to node[auto,labelsize](T) {$g_{A,B}$} 
      (TR);
      \draw[->]  (TR) to node[auto,labelsize] {$\enrich{GA}{\theta_B}'$} (BR);
      \draw[->]  (TL) to node[auto,swap,labelsize] {$g'_{A,B}$} 
      (BL);
      \draw[->]  (BL) to node[auto,labelsize] {$\enrich{\theta_A}{G'B}'$} (BR);
\end{tikzpicture} 
\end{equation*}
\end{itemize}
\end{definition}

\begin{example}
Let $\FPow$ be the 2-category of locally small categories with chosen finite 
powers,
functors preserving finite powers (in the usual sense\footnote{That is,
we do not require these functors to preserve the chosen finite powers on the 
nose.}) 
and all natural transformations.
We have a canonical 2-functor 
\[
\FPow\longrightarrow\Enrich{[\F,\Set]}
\]
which is fully faithful.

Let $\FProd$ be the 2-category of locally small categories with chosen finite 
products,
functors preserving finite products (in the usual sense) 
and all natural transformations.
We have a canonical 2-functor 
\[
\FProd\longrightarrow\Enrich{[\F,\Set]}
\]
which is locally fully faithful.

Hence we may recover the classical functoriality of $\Mod{\monoid{T}}{-}$
for a clone $\monoid{T}$, recalled above, if we could show that it is 
functorial with respect to morphisms in $\Enrich{[\F,\Set]}$.
\end{example}

We also have canonical (locally faithful) 2-functors
\begin{align*}
\SymMonCATls&\longrightarrow\Enrich{[\Pcat,\Set]},\\
\SymMonCATstls&\longrightarrow\Enrich{\SymMonCAT(\Pcat^\op\times\Pcat,\Set)},
\end{align*}
where the domain is the 2-category of locally small symmetric monoidal 
categories, symmetric lax (resp.~strong) monoidal functors  and monoidal 
natural transformations, and
\begin{align*}
\MonCATls&\longrightarrow\Enrich{[\Ncat,\Set]},\\
\MonCATstls&\longrightarrow\Enrich{\MonCAT(\Ncat^\op\times\Ncat,\Set)},
\end{align*}
where the domain is the 2-category of locally small monoidal categories,
lax (resp.~strong) monoidal functors and monoidal natural transformations.

\medskip
Now the functoriality of $\Mod{-}{-}$
may be expressed by saying that it is a 2-functor
\begin{equation}
\label{eqn:enrich_Mod_bifunctorial}
\Mod{-}{-}\colon \Th{\cat{M}}^\op\times\Enrich{\cat{M}}\longrightarrow \tCAT
\end{equation}
(when we say that (\ref{eqn:enrich_Mod_bifunctorial}) is a 
2-functor, we are identifying the category $\Th{\cat{M}}$ with the 
corresponding locally discrete 2-category).
Actually, the 2-functor~(\ref{eqn:enrich_Mod_bifunctorial}) arises
immediately from the structure of
$\Enrich{\cat{M}}$.
Observe that we may identify a theory in $\cat{M}$ with 
an enrichment of the terminal category $1$ over $\cat{M}$.
The full sub-2-category of $\Enrich{\cat{M}}$ consisting of 
all enrichments over the (fixed) terminal category $1$ is in fact 
locally discrete, and is isomorphic to $\Th{\cat{M}}$.
This way we obtain a fully faithful inclusion 2-functor
$\Th{\cat{M}}\longrightarrow\Enrich{\cat{M}}$.
It is straightforward to see that the appropriate 2-functor 
(\ref{eqn:enrich_Mod_bifunctorial}) is given by the composite
\[
\begin{tikzpicture}[baseline=-\the\dimexpr\fontdimen22\textfont2\relax ]
      \node (1) at (0,0)  {$\Th{\cat{M}}^\op\times \Enrich{\cat{M}}$};
      \node (2) at (0,-1.5)  {$\Enrich{\cat{M}}^\op\times \Enrich{\cat{M}}$};
      \node (3) at (0,-3) {$\tCAT$,};
      \draw[->] (1) to node[auto,labelsize]{inclusion} (2);
      \draw[->] (2) to node[auto,labelsize] {$\Enrich{\cat{M}}(-,-)$} (3);
\end{tikzpicture}
\]
where $\Enrich{\cat{M}}(-,-)$ is the hom-2-functor for $\Enrich{\cat{M}}$.

\subsection{Comparing different notions of algebraic theory}
So far we have been working within a fixed notion of algebraic theory.
We now turn to the question of comparing different notions of algebraic 
theory.

By way of illustration,
let us consider the relationship of 
clones, symmetric operads and non-symmetric operads.
On the ``syntactical'' side, we have inclusions
of algebraic theories
\begin{equation}
\label{eqn:chain_alg_thy}
\{\text{non-sym.~operads}\}\subseteq\{\text{sym.~operads}\}\subseteq
\{\text{clones}\},
\end{equation}
in the sense that every symmetric operad may be derived from 
a regular presentation of an equational theory,
which is at the same time a presentation of an equational theory
and therefore defines a clone, etc.
On the ``semantical'' side, in contrast, we have 
inclusions of (standard) notions of models in the opposite direction,
namely:
\begin{equation}
\label{eqn:chain_semantics}
\{\text{mon.~cat.}\}
\supseteq\{\text{sym.~mon.~cat.}\}
\supseteq\{\text{cat.~with fin.~prod.}\}.
\end{equation}

Furthermore,
suppose we take the theory $\monoid{T}$ of monoids 
(which is expressible as a non-symmetric operad)
and the category $\Set$ (which has finite products).
Then we can consider the category of models $\Mod{\monoid{T}}{\Set}$
in three different ways:
either thinking of $\monoid{T}$ as a clone and $\Set$ as a category with finite 
products,
$\monoid{T}$ as a symmetric operad and $\Set$ as a symmetric monoidal category
(via finite products),
or $\monoid{T}$ as a non-symmetric operad and $\Set$ as a monoidal category
(via finite products).
It turns out that the resulting three categories of models are isomorphic to 
each other,
indicating certain compatibility between the three notions of algebraic theory.

\medskip

The key to understand these phenomena in our framework is 
the functoriality of the $\Enrich{-}$ construction. 
That is, we may extend (just like base change of
enriched categories) $\Enrich{-}$ to a 2-functor
\begin{equation}
\label{eqn:Enrich_as_2-functor_1st}
\Enrich{-}\colon \MonCAT\longrightarrow \twoCAT
\end{equation}
from the 2-category $\MonCAT$ 
of large monoidal categories (= metatheories), lax monoidal functors 
and monoidal natural transformations,
to the 2-category $\twoCAT$ of huge 2-categories, 2-functors and 
2-natural transformations.
We just describe the action of a lax monoidal functor on an enrichment,
as the rest of the data for the 2-functor (\ref{eqn:Enrich_as_2-functor_1st}) 
follows routinely.
\begin{definition}
\label{def:action_of_lax_on_enrichment}
Let $\cat{M}=(\cat{M},I_\cat{M},\otimes_\cat{M})$ and 
$\cat{N}=(\cat{N},I_\cat{N},\otimes_\cat{N})$ be metatheories,
$F=(F,f_\cdot,f)\colon\cat{M}\longrightarrow\cat{N}$ be a lax monoidal 
functor,\footnote{Consisting of a functor 
$F\colon\cat{M}\longrightarrow\cat{N}$,
a morphism $f_\cdot\colon I_\cat{N}\longrightarrow FI_\cat{M}$
and a natural transformation $(f_{X,Y}\colon FY\otimes_\cat{N}FX\longrightarrow 
F(Y\otimes_\cat{M} X))_{X,Y\in\cat{M}}$ satisfying the suitable axioms.} 
$\cat{C}$ be a large category and $\enrich{-}{-}=(\enrich{-}{-},j,M)$ be an 
enrichment of $\cat{C}$ over $\cat{M}$.
We define the enrichment $F_\ast(\enrich{-}{-})=(\enrich{-}{-}',j',M')$ 
of $\cat{C}$ over $\cat{N}$ as follows:
\begin{itemize}
\item The functor 
$\enrich{-}{-}'\colon\cat{C}^\op\times\cat{C}\longrightarrow\cat{N}$
maps $(A,B)\in\cat{C}^\op\times\cat{C}$ to $F\enrich{A}{B}$.
\item The natural transformation $(j'_C\colon 
I_\cat{N}\longrightarrow\enrich{C}{C}')_{C\in\cat{C}}$
is defined by $j'_C=Fj_C\circ f_\cdot$:
\[
\begin{tikzpicture}[baseline=-\the\dimexpr\fontdimen22\textfont2\relax ]
      \node (L) at (0,0)  {$I_\cat{N}$};
      \node (M) at (2,0)  {$FI_\cat{M}$};
      \node (R) at (4.5,0)  {$F\enrich{C}{C}$.};
      \draw[->]  (L) to node[auto,labelsize] {$f_\cdot$} (M);
      \draw[->]  (M) to node[auto,labelsize] {$Fj_C$} (R);
\end{tikzpicture} 
\]
\item The natural transformation $(M'_{A,B,C}\colon 
\enrich{B}{C}'\otimes_\cat{N}\enrich{A}{B}'\longrightarrow\enrich{A}{C}')_{A,
B,C\in\cat{C}}$
is defined by $M'_{A,B,C}=FM_{A,B,C}\circ f_{\enrich{A}{B},\enrich{B}{C}}$:
\[
\begin{tikzpicture}[baseline=-\the\dimexpr\fontdimen22\textfont2\relax ]
      \node (L) at (0,0)  {$F\enrich{B}{C}\otimes_\cat{N}F\enrich{A}{B}$};
      \node (M) at (5.5,0)  {$F(\enrich{B}{C}\otimes_\cat{M}\enrich{A}{B})$};
      \node (R) at (9.5,0)  {$F\enrich{A}{C}$.};
      \draw[->]  (L) to node[auto,labelsize] 
      {$f_{\enrich{A}{B},\enrich{B}{C}}$} (M);
      \draw[->]  (M) to node[auto,labelsize] {$FM_{A,B,C}$} (R);
\end{tikzpicture} 
\]
\end{itemize}
\end{definition}
As an immediate consequence of the 2-functoriality 
of $\Enrich{-}$,
it follows that whenever we have a monoidal adjunction (adjunction in 
$\MonCAT$) 
\[
\begin{tikzpicture}[baseline=-\the\dimexpr\fontdimen22\textfont2\relax ]
      \node (L) at (0,0)  {$\cat{M}$};
      \node (R) at (3,0)  {$\cat{N}$,};
      \draw[->,transform canvas={yshift=5pt}]  (L) to node[auto,labelsize] 
      {$L$} (R);
      \draw[<-,transform canvas={yshift=-5pt}]  (L) to 
      node[auto,swap,labelsize] {$R$} (R);
      \node[rotate=-90,labelsize] at (1.5,0)  {$\dashv$};
\end{tikzpicture} 
\] 
we obtain a 2-adjunction
\[
\begin{tikzpicture}[baseline=-\the\dimexpr\fontdimen22\textfont2\relax ]
      \node (L) at (0,0)  {$\Enrich{\cat{M}}$};
      \node (R) at (4.5,0)  {$\Enrich{\cat{N}}$.};
      \draw[->,transform canvas={yshift=5pt}]  (L) to node[auto,labelsize] 
      {$\Enrich{L}$} (R);
      \draw[<-,transform canvas={yshift=-5pt}]  (L) to 
      node[auto,swap,labelsize] {$\Enrich{R}$} 
      (R);
      \node[rotate=-90,labelsize] at (2.25,0)  {$\dashv$};
\end{tikzpicture} 
\] 
Therefore, if we take 
$\monoid{T}\in\Th{\cat{M}}\subseteq\Enrich{\cat{M}}$
and $(\cat{C},\enrich{-}{-})\in \Enrich{\cat{N}}$ in this situation, 
then we get an isomorphism of categories
\begin{equation}
\label{eqn:iso_adj_enrich}
\Enrich{\cat{M}}(\monoid{T},\Enrich{R}(\cat{C},\enrich{-}{-}))
\cong\Enrich{\cat{N}}(\Enrich{L}(\monoid{T}),(\cat{C},\enrich{-}{-})).
\end{equation}
Since the action of $\Enrich{-}$ preserves the underlying categories,
we may regard $\Enrich{L}(\monoid{T})$ as a theory in $\cat{N}$.
Therefore 
(\ref{eqn:iso_adj_enrich}) may be seen as an isomorphism between
the category of models of $\monoid{T}$ in $\cat{C}$ with 
respect to $R_\ast(\enrich{-}{-})$,
and the category of models of $\Enrich{L}(\monoid{T})$
in $\cat{C}$ with respect to $\enrich{-}{-}$.

The relationship between clones, symmetric operads and non-symmetric operads
mentioned above can be explained in this way.
First note that there is a chain of inclusions
\[
\begin{tikzpicture}[baseline=-\the\dimexpr\fontdimen22\textfont2\relax ]
      \node (L) at (0,0)  {$\Ncat$};
      \node (M) at (2,0)  {$\Pcat$};
      \node (R) at (4,0)  {$\F$.};
      \draw[->]  (L) to node[auto,labelsize] {$J$} (M);
      \draw[->]  (M) to node[auto,labelsize] {$J'$} (R);
\end{tikzpicture} 
\]
Therefore, precomposition and left Kan extensions 
induce a chain of adjunctions
\[
\begin{tikzpicture}[baseline=-\the\dimexpr\fontdimen22\textfont2\relax ]
      \node (L) at (0,0)  {$[\Ncat,\Set]$};
      \node (M) at (3.5,0)  {$[\Pcat,\Set]$};
      \node (R) at (7,0)  {$[\F,\Set]$.};
      \draw[->,transform canvas={yshift=5pt}]  (L) to node[auto,labelsize] 
      {$\Lan_J$} (M);
      \draw[<-,transform canvas={yshift=-5pt}] (L) to node[auto,swap,labelsize] 
      {$[J,\Set]$} (M);
      \draw[->,transform canvas={yshift=5pt}]  (M) to node[auto,labelsize] 
      {$\Lan_{J'}$} (R);
      \draw[<-,transform canvas={yshift=-5pt}] (M) to node[auto,swap,labelsize] 
      {$[J',\Set]$} 
      (R);
      \node[rotate=-90,labelsize] at (1.75,0)  {$\dashv$};
      \node[rotate=-90,labelsize] at (5.25,0)  {$\dashv$};
\end{tikzpicture} 
\] 
It turns out that these adjunctions acquire natural structures of 
monoidal adjunctions
(with respect to the substitution monoidal structures).
Hence in our framework, the inclusions 
(\ref{eqn:chain_alg_thy}) are expressed as the functors
\[
\begin{tikzpicture}[baseline=-\the\dimexpr\fontdimen22\textfont2\relax ]
      \node (L) at (0,0)  {$\Th{[\Ncat,\Set]}$};
      \node (M) at (4.5,0)  {$\Th{[\Pcat,\Set]}$};
      \node (R) at (9,0)  {$\Th{[\F,\Set]}$};
      \draw[->]  (L) to node[auto,labelsize] {$\Th{\Lan_J}$} (M);
      \draw[->]  (M) to node[auto,labelsize] {$\Th{\Lan_{J'}}$} (R);
\end{tikzpicture} 
\]
between the categories of theories,
whereas the inclusions (\ref{eqn:chain_semantics}) are restrictions of 
the 2-functors 
\[
\begin{tikzpicture}[baseline=-\the\dimexpr\fontdimen22\textfont2\relax ]
      \node (L) at (0,0)  {$\Enrich{[\Ncat,\Set]}$};
      \node (M) at (5,0)  {$\Enrich{[\Pcat,\Set]}$};
      \node (R) at (10,0)  {$\Enrich{[\F,\Set]}$};
      \draw[<-]  (L) to node[auto,labelsize] {$\Enrich{[J,\Set]}$} (M);
      \draw[<-]  (M) to node[auto,labelsize] {$\Enrich{[{J'},\Set]}$} (R);
\end{tikzpicture} 
\]
between the 2-categories of enrichments.

\section{Notions of model as oplax actions}
\label{sec:oplax_action}
In order to capture the standard notions of model for monads and generalised 
operads, 
enrichments do not suffice in general.
A suitable structure is \emph{oplax action}, defined as follows.

\begin{definition}
\label{def:oplax_action}
Let $\cat{M}=(\cat{M},I,\otimes)$ be a metatheory.
An \defemph{oplax action of $\cat{M}$} consists of:
\begin{itemize}
\item a large category $\cat{C}$;
\item a functor $\ast \colon \cat{M}\times\cat{C}\longrightarrow\cat{C}$;
\item a natural transformation $(\varepsilon_C\colon I\ast C\longrightarrow 
C)_{C\in\cat{C}}$;
\item a natural transformation 
$(\delta_{X,Y,C}\colon (Y\otimes X)\ast C\longrightarrow Y\ast (X\ast 
C))_{X,Y\in\cat{M},C\in\cat{C}},$\footnote{We have chosen to set 
$\delta_{X,Y,C}\colon (Y\otimes X)\ast C
\longrightarrow Y\ast (X\ast C)$ and not 
$\delta_{X,Y,C}\colon (X\otimes Y)\ast C
\longrightarrow X\ast (Y\ast C)$,
because the former agrees with 
the convention to write composition of morphisms in the
anti-diagrammatic order, which we adopt in
this paper.}
\end{itemize} 
making the following diagrams commute for all $X,Y,Z\in\cat{M}$
and $C\in\cat{C}$:
\begin{equation*}
\begin{tikzpicture}[baseline=-\the\dimexpr\fontdimen22\textfont2\relax ]
      \node (TL) at (0,1)  {$(I\otimes X)\ast C$};
      \node (TR) at (4,1)  {$I\ast(X\ast C)$};
      \node (BR) at (4,-1) {$X\ast C$};
      \draw[->] (TL) to node[auto,labelsize](T) {$\delta_{X,I,C}$} (TR);
      \draw[->] (TR) to node[auto,labelsize](R) {$\varepsilon_{X\ast C}$} (BR);
      \draw[->]  (TL) to node[auto,labelsize,swap] (L) {$\cong$} (BR);
\end{tikzpicture} 
\quad
\begin{tikzpicture}[baseline=-\the\dimexpr\fontdimen22\textfont2\relax ]
      \node (TL) at (0,1)  {$(X\otimes I)\ast C$};
      \node (TR) at (4,1)  {$X\ast(I\ast C)$};
      \node (BR) at (4,-1) {$X\ast C$};
      \draw[->] (TL) to node[auto,labelsize](T) {$\delta_{I,X,C}$} (TR);
      \draw[->] (TR) to node[auto,labelsize](R) {$X\ast \varepsilon_{C}$} (BR);
      \draw[->]  (TL) to node[auto,labelsize,swap] (L) {$\cong$} (BR);
\end{tikzpicture} 
\end{equation*}
\begin{equation*}
\begin{tikzpicture}[baseline=-\the\dimexpr\fontdimen22\textfont2\relax ]
      \node (TL) at (0,1)  {$((Z\otimes Y)\otimes X)\ast C$};
      \node (TR) at (9,1)  {$(Z\otimes Y)\ast (X\ast C)$};
      \node (BL) at (0,-1) {$(Z\otimes (Y\otimes X))\ast C$};
      \node (BM) at (4.5,-1) {$Z\ast ((Y\otimes X)\ast C)$};
      \node (BR) at (9,-1) {$Z\ast (Y\ast (X\ast C)).$};
      \draw[->] (TL) to node[auto,labelsize](T) {$\delta_{X,Z\otimes Y,C}$} 
      (TR);
      \draw[->]  (TR) to node[auto,labelsize] {$\delta_{Y,Z,X\ast C}$} (BR);
      \draw[->]  (TL) to node[auto,swap,labelsize] {$\cong$} (BL);
      \draw[->]  (BL) to node[auto,labelsize] {$\delta_{Y\otimes X,Z,C}$} (BM);
      \draw[->] (BM) to node[auto,labelsize](B) {$Z\ast \delta_{X,Y,C}$} (BR);
\end{tikzpicture} 
\end{equation*}
We say that $(\cat{C},\ast,\varepsilon,\delta)$ is 
an oplax action of $\cat{M}$, or that $(\ast,\varepsilon,\delta)$ is 
an oplax action of $\cat{M}$ on $\cat{C}$.
\end{definition}

An oplax action $(\ast,\varepsilon,\delta)$ of $\cat{M}$
on $\cat{C}$ is 
called a \defemph{pseudo action}
(resp.~\defemph{strict action})
if both $\varepsilon$ and $\delta$ are natural isomorphisms
(resp.~identities).

The definition of model we derive from an oplax action is 
the following (cf.~\cite[Section~2.2]{Baez_Dolan_HDA3}).

\begin{definition}
\label{def:action_model}
Let $\cat{M}=(\cat{M},I,\otimes)$ be a metatheory,
$\monoid{T}=(T,e,m)$ be a theory in $\cat{M}$,
$\cat{C}$ be a large category, and $\ast=(\ast,\varepsilon,\delta)$
be an oplax action of $\cat{M}$ on $\cat{C}$.
\begin{enumerate}
\item A \defemph{model of $\monoid{T}$ in $\cat{C}$ with respect to $\ast$}
is a pair $(C,\gamma)$ consisting of an object $C\in\cat{C}$
and a morphism $\gamma\colon T\ast C\longrightarrow C$ in $\cat{C}$
making the following diagrams commute:
\begin{equation*}
\begin{tikzpicture}[baseline=-\the\dimexpr\fontdimen22\textfont2\relax ]
      \node (TL) at (0,2)  {$I\ast C$};
      \node (TR) at (2,2)  {$T\ast C$};
      \node (BR) at (2,0) {${C}$};
      \draw[->]  (TL) to node[auto,labelsize] {$e\ast C$} (TR);
      \draw[->]  (TR) to node[auto,labelsize] {$\gamma$} (BR);
      \draw[->] (TL) to node[auto,swap,labelsize](B) {$\varepsilon_C$} (BR);
\end{tikzpicture} 
\qquad
\begin{tikzpicture}[baseline=-\the\dimexpr\fontdimen22\textfont2\relax ]
      \node (TL) at (0,2)  {$(T\otimes T)\ast C$};
      \node (TR) at (5,2)  {$T\ast C$};
      \node (BL) at (0,0) {$T\ast (T\ast C)$};
      \node (BM) at (3,0) {$T\ast C$};
      \node (BR) at (5,0) {${C}.$};
      \draw[->]  (TL) to node[auto,swap,labelsize] {$\delta_{T,T,C}$} (BL);
      \draw[->]  (TL) to node[auto,labelsize] {$m\ast C$} (TR);
      \draw[->]  (BL) to node[auto,labelsize] {$T\ast \gamma$} (BM);
      \draw[->]  (BM) to node[auto,labelsize] {$\gamma$} (BR);
      \draw[->]  (TR) to node[auto,labelsize](B) {$\gamma$} (BR);
\end{tikzpicture} 
\end{equation*}
\item Let $(C,\gamma)$ and $(C',\gamma')$ be models of $\monoid{T}$ in $\cat{C}$
with respect to $\ast$. A \defemph{homomorphism from $(C,\gamma)$ to 
$(C',\gamma')$}
is a morphism $f\colon C\longrightarrow C'$ in $\cat{C}$ making the 
following diagram commute:
\[
\begin{tikzpicture}[baseline=-\the\dimexpr\fontdimen22\textfont2\relax ]
      \node (TL) at (0,2)  {$T\ast C$};
      \node (TR) at (3,2)  {$T\ast C'$};
      \node (BL) at (0,0) {$C$};
      \node (BR) at (3,0) {$C'.$};
      \draw[->]  (TL) to node[auto,swap,labelsize] {$\gamma$} (BL);
      \draw[->]  (TL) to node[auto,labelsize] {$T\ast f$} (TR);
      \draw[->]  (BL) to node[auto,labelsize] {${f}$} (BR);
      \draw[->]  (TR) to node[auto,labelsize](B) {$\gamma'$} (BR);
\end{tikzpicture} 
\] 
\end{enumerate}
We denote the (large) category of models of $\monoid{T}$ in $\cat{C}$ with 
respect to $\ast$ by $\Mod{\monoid{T}}{(\cat{C},\ast)}$.
\end{definition}

\begin{example}
\label{ex:monad_standard_action}
Let $\cat{C}$ be a large category.
Recall that the metatheory for monads on $\cat{C}$ is
$[\cat{C},\cat{C}]$.
We have a strict action 
\[
\ast\colon [\cat{C},\cat{C}]\times\cat{C}\longrightarrow\cat{C}
\]
given by evaluation: $(X,C)\longmapsto XC$.

This clearly generates the definitions of 
Eilenberg--Moore algebra and 
homomorphism (Definition~\ref{def:EM_alg}).
\end{example}

\begin{example}
\label{ex:metamodel_S-operad}
Let $\cat{C}$ be a large category with finite limits and 
$\monoid{S}=(S,\eta,\mu)$
be a cartesian monad on $\cat{C}$. 
Recall that the metatheory for $\monoid{S}$-operads is 
$\cat{C}/S1$.
Models of an $\monoid{S}$-operad and their homomorphisms
(Definition~\ref{def:S-operad_model}) were
introduced by using the pseudo action
\[
\ast\colon(\cat{C}/S1)\times \cat{C}\longrightarrow\cat{C}
\]
in the first place, and therefore are an instance of the above
general definitions.
\end{example}

\subsection{The 2-category of oplax actions of $\cat{M}$}
For a metatheory $\cat{M}$, 
we can define the 2-category of oplax actions of $\cat{M}$
(cf.~Definition~\ref{def:2-cat_of_enrichments}).

\begin{definition}
Let $\cat{M}=(\cat{M},I,\otimes)$ be a metatheory.
The (locally large) 2-category $\olAct{\cat{M}}$ of oplax actions of $\cat{M}$
is defined as follows:
\begin{itemize}
\item An object is an oplax action $(\cat{C},\ast,\varepsilon,\delta)$ of 
$\cat{M}$.
\item A 1-cell from $(\cat{C},\ast,\varepsilon,\delta)$ to 
$(\cat{C'},\ast',\varepsilon',\delta')$ is a functor $G\colon \cat{C}
\longrightarrow\cat{C'}$ together with a natural transformation
$(g_{X,C}\colon X\ast' GC\longrightarrow G(X\ast C))_{X\in\cat{M},C\in\cat{C}}$
making the following diagrams commute for all $X,Y\in\cat{M}$ and $C\in\cat{C}$:
\begin{equation*}
\begin{tikzpicture}[baseline=-\the\dimexpr\fontdimen22\textfont2\relax ]
      \node (TL) at (0,1)  {$I\ast' GC$};
      \node (TR) at (4,1)  {$G(I\ast C)$};
      \node (BR) at (4,-1) {$GC$};
      \draw[->] (TL) to node[auto,labelsize](T) {$g_{I,C}$} (TR);
      \draw[->] (TR) to node[auto,labelsize](R) {$G\varepsilon_C$} (BR);
      \draw[->]  (TL) to node[auto,labelsize,swap] (L) {$\varepsilon'_{GC}$} 
      (BR);
\end{tikzpicture} 
\end{equation*}
\begin{equation*}
\begin{tikzpicture}[baseline=-\the\dimexpr\fontdimen22\textfont2\relax ]
      \node (TL) at (0,1)  {$(Y\otimes X)\ast' GC$};
      \node (TR) at (8,1)  {$G((Y\otimes X)\ast C)$};
      \node (BL) at (0,-1) {$Y\ast'(X\ast'GC)$};
      \node (BM) at (4,-1) {$Y\ast'G(X\ast C)$};
      \node (BR) at (8,-1) {$G(Y\ast (X\ast C)).$};
      \draw[->] (TL) to node[auto,labelsize](T) {$g_{Y\otimes X,C}$} 
      (TR);
      \draw[->]  (TR) to node[auto,labelsize] {$G\delta_{X,Y,C}$} (BR);
      \draw[->]  (TL) to node[auto,swap,labelsize] {$\delta'_{X,Y,GC}$} (BL);
      \draw[->]  (BL) to node[auto,labelsize] {$Y\ast' g_{X,C}$} (BM);
      \draw[->] (BM) to node[auto,labelsize](B) {$g_{Y,X\ast C}$} (BR);
\end{tikzpicture} 
\end{equation*}
\item A 2-cell from $(G,g)$ to $(G',g')$, both from 
$(\cat{C},\ast,\varepsilon,\delta)$ to 
$(\cat{C'},\ast',\varepsilon',\delta')$,
is a natural transformation $\theta\colon G\Longrightarrow G'$
making the following diagram commute for all $X\in\cat{M}$
and $C\in\cat{C}$:
\begin{equation*}
\begin{tikzpicture}[baseline=-\the\dimexpr\fontdimen22\textfont2\relax ]
      \node (TL) at (0,2)  {$X\ast' GC$};
      \node (TR) at (4,2)  {$G(X\ast C)$};
      \node (BL) at (0,0) {$X\ast' G'C$};
      \node (BR) at (4,0) {$G'(X\ast C).$};
      \draw[->] (TL) to node[auto,labelsize](T) {$g_{X,C}$} 
      (TR);
      \draw[->]  (TR) to node[auto,labelsize] {$\theta_{X\ast C}$} (BR);
      \draw[->]  (TL) to node[auto,swap,labelsize] {$X\ast' \theta_C$} 
      (BL);
      \draw[->]  (BL) to node[auto,labelsize] {$g'_{X,C}$} (BR);
\end{tikzpicture} 
\end{equation*}
\end{itemize}
\end{definition}

Similarly to the case of enrichments, we may
extend the $\Mod{-}{-}$ construction into a 2-functor
\[
\Mod{-}{-}\colon \Th{\cat{M}}^\op\times \olAct{\cat{M}}
\longrightarrow\tCAT.
\]

On the other hand, $\olAct{-}$ extends to a 2-functor 
in an apparently different manner than $\Enrich{-}$.
Namely, it is a 2-functor of type
\[
\olAct{-}\colon (\MonCATol)^\coop\longrightarrow \twoCAT,
\]
where $\MonCATol$ is the 2-category of large monoidal categories
(= metatheories),
oplax monoidal functors and monoidal natural transformations.
The apparent discrepancy between functoriality of $\olAct{-}$
and $\Enrich{-}$
will be solved in Section~\ref{sec:morphism_metatheory}
with the introduction of morphisms of metatheories.

\subsection{The relation between enrichments and oplax actions}
\label{subsec:enrich_action_adjunction}
We have introduced two types of structures---enrichment and oplax action---to
formalise notions of model. 
The former captures the standard notions of model 
for clones, symmetric operads, non-symmetric operads, PROPs and PROs,
whereas the latter captures those for
monads and generalised operads.
We will unify enrichment and oplax action by  
the notion of \emph{metamodel} in the next section,
but before doing so we remark on the relationship 
between them.
We also explain why in some good cases we can give definition of model 
both in terms of enrichment and oplax actions;
for instances of this phenomenon in the literature,
see e.g., \cite[Section~3]{Kelly_coherence_lax_dist} and 
\cite[Section~6.4]{Leinster_book}.


\medskip

Let $\cat{M}=(\cat{M},I,\otimes)$ be a metatheory and 
$\cat{C}$ be a large category.
The relationship between enrichment and oplax action is summarised in
the adjunction
\begin{equation}
\label{eqn:adj_act_enrich}
\begin{tikzpicture}[baseline=-\the\dimexpr\fontdimen22\textfont2\relax ]
      \node (L) at (0,0)  {$\cat{M}$};
      \node (R) at (3,0)  {$\cat{C}$.};
      \draw[->,transform canvas={yshift=5pt}]  (L) to node[auto,labelsize] 
      {$-\ast C$} (R);
      \draw[<-,transform canvas={yshift=-5pt}]  (L) to 
      node[auto,swap,labelsize] 
      {$\enrich{C}{-}$} (R);
      \node[rotate=-90,labelsize] at (1.5,0)  {$\dashv$};
\end{tikzpicture} 
\end{equation}
In more detail, what we mean is the following.
Suppose that we have an enrichment $(\enrich{-}{-},j,M)$
of $\cat{C}$ over $\cat{M}$.
If, in addition, for each $C\in\cat{C}$ the functor $\enrich{C}{-}$
has a left adjoint as in (\ref{eqn:adj_act_enrich}),
then---by the parameter theorem 
for adjunctions; see \cite[Section~IV.7]{MacLane_CWM}---the left adjoints
canonically extend to a 
bifunctor $\ast\colon \cat{M}\times\cat{C}\longrightarrow\cat{C}$,
and $j$ and $M$ define appropriate natural transformations $\varepsilon$
and $\delta$, giving rise to an oplax action $(\ast,\varepsilon,\delta)$
of $\cat{M}$ on $\cat{C}$.
And vice versa, if we start from an oplax action.

To make this idea into a precise mathematical statement, let us introduce the 
following 2-categories.
\begin{definition}
\label{def:enrichr_olactl}
Let $\cat{M}$ be a metatheory.
\begin{enumerate}
\item Let $\Enrichr{\cat{M}}$ be the full sub-2-category of $\Enrich{\cat{M}}$
consisting of all enrichments $(\cat{C},\enrich{-}{-},j,M)$ such that 
for each $C\in\cat{C}$, $\enrich{C}{-}$ is a right adjoint.
\item Let $\olActl{\cat{M}}$ be the full sub-2-category of $\olAct{\cat{M}}$
consisting of all oplax actions $(\cat{C},\ast,\varepsilon,\delta)$
such that for each $C\in\cat{C}$, $-\ast C$ is a left adjoint. 
\end{enumerate}
\end{definition}
The above discussion can be summarised in the
statement that the two 2-categories $\Enrichr{\cat{M}}$ and $\olActl{\cat{M}}$
are equivalent.
A direct proof of this equivalence would be essentially routine, 
but seems to involve rather lengthy calculation.
We shall defer a proof to Corollary~\ref{cor:enrichr_olactl_equiv}.

This observation, also mentioned in \cite{Campbell_skew} in a more general setting, is a variant of well-known categorical folklore.
In the literature, it is usually stated in a slightly more restricted form than 
the above, for example as a correspondence between tensored $\cat{M}$-categories
and closed pseudo actions of 
$\cat{M}$~\cite{Kelly_coherence_lax_dist,Gordon_Power,Lindner_enrich_module,Janelidze_Kelly}.
 
\medskip

Furthermore, the above correspondence is
compatible with the definitions of model (Definitions~\ref{def:enrich_model}
and \ref{def:action_model}).
Suppose that $(\cat{C},\enrich{-}{-},j,M)$ and 
$(\cat{C},\ast,\varepsilon,\delta)$
form a pair of an enrichment over $\cat{M}$ and an oplax action of $\cat{M}$
connected by the adjunctions (\ref{eqn:adj_act_enrich})
(in a way compatible with the natural transformations
$j,M,\varepsilon$ and $\delta$).
Then for any theory $\monoid{T}=(T,e,m)$ in $\cat{M}$ and any
object $C\in\cat{C}$,
a morphism 
\[
\chi\colon T\longrightarrow \enrich{C}{C}
\]
is a model of $\monoid{T}$ in $\cat{C}$ with respect to $\enrich{-}{-}$
(Definition~\ref{def:enrich_model})
if and only if its transpose under the adjunction $-\ast C\dashv \enrich{C}{-}$
\[
\gamma\colon T\ast C\longrightarrow C
\]
is a model of $\monoid{T}$ in $\cat{C}$ with respect to $\ast$
(Definition~\ref{def:action_model}),
and similarly for homomorphism between models of $\monoid{T}$.
Hence we obtain an isomorphism of categories 
\[
\Mod{\monoid{T}}{(\cat{C},\enrich{-}{-})}\cong\Mod{\monoid{T}}{(\cat{C},\ast)}
\]
commuting with the forgetful functors into $\cat{C}$.

Some of the enrichments and oplax actions we have introduced so far
are good enough to obtain the corresponding oplax actions or enrichments,
giving rise to alternative definitions of model.

\begin{example}
\label{ex:monad_on_cat_w_powers}
Let $\cat{C}$ be a locally small category with all small powers.
Recall the strict action
\[
\ast\colon[\cat{C},\cat{C}]\times\cat{C}\longrightarrow\cat{C}
\]
of the metatheory $[\cat{C},\cat{C}]$ for monads
on $\cat{C}$, used to capture their Eilenberg--Moore algebras.
For any object $C\in\cat{C}$, write by $\name{C}\colon 1\longrightarrow\cat{C}$
the functor from the terminal category $1$ which maps the unique object of $1$
to $C\in\cat{C}$ (the \emph{name of $C$}).

By the assumptions on $\cat{C}$,
for any object $A\in\cat{C}$ the functor $-\ast A$ (which may be seen as
the precomposition by $\name{A}\colon 1\longrightarrow\cat{C}$) admits
a right adjoint $\enrich{A}{-}$, which maps any $B\in\cat{C}$ (equivalently,
$\name{B}\colon 1\longrightarrow\cat{C}$) to the right Kan extension 
$\enrich{A}{B}=\Ran_{\name{A}}\name{B}$ of $\name{B}$ along $\name{A}$.
The functor $\Ran_{\name{A}}\name{B}\colon\cat{C}\longrightarrow\cat{C}$
maps $C\in\cat{C}$ to $\Ran_{\name{A}}\name{B}(C)=B^{\cat{C}(C,A)}$.

For any object $C\in\cat{C}$, $\Ran_{\name{C}}\name{C}$
exists and acquires a canonical monad structure (the \emph{codensity 
monad of $\name{C}$}).
For any monad $\monoid{T}$ on $\cat{C}$, to give a structure of an 
Eilenberg--Moore algebra on $C\in\cat{C}$ is equivalent to 
give a monad morphism from $\monoid{T}$ to $\Ran_{\name{C}}\name{C}$.
This observation is in e.g., \cite[Section~3]{Kelly_coherence_lax_dist}.

In particular, if we take $\cat{C}=\Set$,
we see that the above enrichment agrees with the one given 
in Example~\ref{ex:relative_alg}.
Hence the notion of relative algebra \cite{Hino} agrees with
that of Eilenberg--Moore algebra in this case. 
\end{example}

\section{Metamodels and models}
\label{sec:metamodel_model}

In Sections~\ref{sec:enrichment} and \ref{sec:oplax_action},
we have seen that the standard notions of model for various notions of 
algebraic theory can be formalised either as enrichments or
as oplax actions. 
With \emph{two} definitions, however, we cannot claim to have 
formalised notions of model in a sufficiently satisfactory manner.
We now unify enrichments and oplax actions by introducing
a more general structure of 
\emph{metamodel} (of a metatheory).
We also derive definitions of models of theories and of their homomorphisms
from a metamodel, and show that they generalise the corresponding 
definitions for enrichments and oplax actions.

\medskip

We may approach 
the concept of metamodel of a metatheory $\cat{M}$
in two different ways, one by generalising
enrichments over $\cat{M}$ and the other 
by generalising oplax actions of $\cat{M}$.
Before giving a formal (and neutral) definition of metamodel, we describe
these two perspectives.

\subsection{Metamodels as generalised enrichments}
Let us first discuss how a generalisation of enrichments over $\cat{M}$
leads to the notion of metamodel.
For this, we use the construction known as the 
\emph{Day convolution}~\cite{Day_thesis}.
Given any metatheory $\cat{M}=(\cat{M},I,\otimes)$,
this construction endows the presheaf category 
$\widehat{\cat{M}}=[\cat{M}^\op,\SET]$
with a (biclosed) monoidal structure $(\DayI,\Dayo)$,
in such a way that the Yoneda embedding 
$\cat{M}\longrightarrow\widehat{\cat{M}}$ canonically becomes strong monoidal.

\begin{definition}[\cite{Day_thesis}]
\label{def:Day_conv}
Let $\cat{M}=(\cat{M},I,\otimes)$ be a metatheory.
The \defemph{convolution monoidal structure} 
$(\DayI,\Dayo)$ on the presheaf category $\widehat{\cat{M}}
=[\cat{M}^\op,\SET]$ is defined as follows.
\begin{itemize}
\item The unit object $\DayI$ is the representable functor 
$\cat{M}(-,I)\colon \cat{M}^\op\longrightarrow\SET$.
\item Given $P,Q\in\widehat{\cat{M}}$, their monoidal product
$Q\Dayo P\colon \cat{M}^\op\longrightarrow\SET$ maps $Z\in\cat{M}$
to 
\begin{equation*}
(Q\Dayo P)(Z)=\int^{X,Y\in\cat{M}} \cat{M}(Z,Y\otimes X)
\times Q(Y)\times P(X).
\end{equation*}
\end{itemize}
\end{definition}

For a metatheory $\cat{M}$, 
a \emph{metamodel of $\cat{M}$} is simply an enrichment over 
$\widehat{\cat{M}}=(\widehat{\cat{M}},\DayI,\Dayo)$.\footnote{Although 
we have defined enrichment (Definition~\ref{def:enrichment})
only for metatheories, i.e., large monoidal categories,
the definition does not depend on any size condition
and it is clear what we mean by enrichments over non-large monoidal categories, 
such as $\widehat{\cat{M}}$.}
Thanks to the strong monoidal Yoneda embedding, 
every enrichment over $\cat{M}$ induces a metamodel of $\cat{M}$.

We can find several uses of $\widehat{\cat{M}}$-categories 
(in the sense of enriched category theory) in the literature.
In particular, \cite[Section~6]{Kelly_et_al_two_sides} and
\cite{Mellies_enriched_adjunction} contain 
discussions on relationship between $\widehat{\cat{M}}$-categories
and various actions of $\cat{M}$.

\subsection{Metamodels as generalised oplax actions}
Let us move on to the second perspective on metamodels,
namely as generalised oplax actions.
First note that an oplax action $(\cat{C},\ast,\varepsilon,\delta)$ 
of a metatheory $\cat{M}$
can be equivalently given as an oplax monoidal functor 
\begin{equation*}
\cat{M}\longrightarrow[\cat{C},\cat{C}]
\end{equation*}
defined by $X\longmapsto X\ast-$,
or as a colax functor
\begin{equation}
\label{eqn:oplax_action_as_oplax_functor}
\Sigma\cat{M}\longrightarrow\tCAT,
\end{equation}
where $\Sigma\cat{M}$ denotes $\cat{M}$ seen as a one-object 
bicategory~\cite{Benabou_bicat}.

To generalise this, 
we use the bicategory $\PROF$ of {profunctors}
(Definition~\ref{def:profunctor}).
It is well-known that 
both $\tCAT$ and $\tCAT^\coop$ canonically
embed into $\PROF$.
Both embeddings are identity-on-objects and locally fully faithful 
pseudofunctors.
The embedding
\[
(-)_\ast\colon\tCAT\longrightarrow\PROF
\]
maps a functor $F\colon\cat{A}\longrightarrow\cat{B}$
to the profunctor $F_\ast\colon\cat{A}\pto\cat{B}$ defined by 
$F_\ast(B,A)=\cat{B}(B,FA)$, whereas
the embedding
\[
(-)^\ast\colon \tCAT^\coop\longrightarrow\PROF
\]
maps a functor $F\colon\cat{A}\longrightarrow\cat{B}$ to 
the profunctor $F^\ast\colon\cat{B}\pto\cat{A}$
with $F^\ast(A,B)=\cat{B}(FA,B)$.
Moreover, for any functor $F\colon\cat{A}\longrightarrow\cat{B}$
we have an adjunction $F_\ast\dashv F^\ast$ in $\PROF$.

A \emph{metamodel of $\cat{M}$} is a colax functor 
\[
\Sigma\cat{M}\longrightarrow \PROF^\coop,
\]
or equivalently a lax functor 
\begin{equation}
\label{eqn:metamodel_as_lax_functor}
(\Sigma\cat{M})^\co=\Sigma(\cat{M}^\op)\longrightarrow \PROF^\op.
\end{equation}
Clearly, oplax actions of $\cat{M}$, in the form
(\ref{eqn:oplax_action_as_oplax_functor}),
give rise to metamodels of $\cat{M}$ by postcomposing the pseudofunctor 
$(-)^\ast$.

Let us restate what a lax functor of type (\ref{eqn:metamodel_as_lax_functor})
amounts to, in monoidal categorical terms. 
\begin{definition}
Let $\cat{C}$ be a large category. 
Define the monoidal category $[\cat{C}^\op\times \cat{C},\SET]
=([\cat{C}^\op\times\cat{C},\SET],\cat{C}(-,-),\ptensorrev)$ of
\defemph{endo-profunctors on $\cat{C}$}
to be the endo-hom-category $\PROF^\op(\cat{C},\cat{C})$.
More precisely:
\begin{itemize}
\item The unit object is the hom-functor
$\cat{C}(-,-)\colon \cat{C}^\op\times\cat{C}\longrightarrow\SET$.
\item Given $H,K\colon\cat{C}^\op\times\cat{C}\longrightarrow\SET$,
their monoidal product $H\ptensorrev K$
maps $(A,C)\in\cat{C}^\op\times\cat{C}$ to
\[
(H\ptensorrev K)(A,C)=\int^{B\in\cat{C}}H(B,C)\times K(A,B). 
\]
\end{itemize}
\end{definition}
Note that $H\ptensorrev K\cong K\ptensor H$ (i.e., $\ptensorrev$ is 
``$\ptensor$ reversed'').

Using this monoidal structure on $[\cat{C}^\op\times\cat{C},\SET]$, a 
metamodel of $\cat{M}$ in a large category $\cat{C}$
may be written as a lax monoidal functor
\[
\cat{M}^\op\longrightarrow[\cat{C}^\op\times\cat{C},\SET].
\]

\medskip

We now give the neutral definition of metamodels.

\begin{definition}[{Cf.~\cite[Definition 4.1]{Campbell_skew}}]
\label{def:metamodel}
Let $\cat{M}=(\cat{M},\otimes, I)$ be a metatheory.
A \defemph{metamodel of $\cat{M}$} consists of:
\begin{itemize}
\item a large category $\cat{C}$;
\item a functor 
$\Phi\colon \cat{M}^\op\times\cat{C}^\op\times\cat{C}\longrightarrow\SET$
(whose action we write as $(X,A,B)\longmapsto \Phi_X(A,B)$);
\item a natural transformation $((\overline{\phi}_\cdot)_C\colon 
1\longrightarrow\Phi_I(C,C))_{C\in\cat{C}}$;
\item a natural transformation 
\[
((\overline{\phi}_{X,Y})_{A,B,C}\colon 
\Phi_Y(B,C)\times\Phi_X(A,B)
\longrightarrow\Phi_{Y\otimes X}(A,C))_{X,Y\in\cat{M},A,B,C\in\cat{C}},
\]
\end{itemize}
making the following diagrams commute 
for all $X,Y,Z\in\cat{M}$ and $A,B,C,D\in\cat{C}$:
\begin{equation*}
\begin{tikzpicture}[baseline=-\the\dimexpr\fontdimen22\textfont2\relax ]
      \node (TL) at (0,1)  {$1\times  \Phi_X(A,B)$};
      \node (TR) at (6,1)  {$\Phi_I(B,B)\times \Phi_X(A,B)$};
      \node (BL) at (0,-1) {$\Phi_X(A,B)$};
      \node (BR) at (6,-1) {$\Phi_{I\otimes X}(A,B)$};
      \draw[->] (TL) to node[auto,labelsize](T) 
      {$(\overline{\phi}_\cdot)_B\times \Phi_X(A,B)$} 
      (TR);
      \draw[->]  (TR) to node[auto,labelsize] 
      {$(\overline{\phi}_{X,I})_{A,B,B}$} (BR);
      \draw[->]  (TL) to node[auto,swap,labelsize] {$\cong$} 
      (BL);
      \draw[->]  (BL) to node[auto,labelsize] {$\cong$} (BR);
\end{tikzpicture} 
\end{equation*}
\begin{equation*}
\begin{tikzpicture}[baseline=-\the\dimexpr\fontdimen22\textfont2\relax ]
      \node (TL) at (0,1)  {$\Phi_X(A,B)\times 1$};
      \node (TR) at (6,1)  {$\Phi_X(A,B)\times\Phi_I(A,A)$};
      \node (BL) at (0,-1) {$\Phi_X(A,B)$};
      \node (BR) at (6,-1) {$\Phi_{X\otimes I}(A,B)$};
      \draw[->] (TL) to node[auto,labelsize](T) 
      {$\Phi_X(A,B)\times(\overline{\phi}_\cdot)_A$} 
      (TR);
      \draw[->]  (TR) to node[auto,labelsize] 
      {$(\overline{\phi}_{I,X})_{A,A,B}$} (BR);
      \draw[->]  (TL) to node[auto,swap,labelsize] {$\cong$} 
      (BL);
      \draw[->]  (BL) to node[auto,labelsize] {$\cong$} (BR);
\end{tikzpicture}
\end{equation*}
\begin{equation*}
\begin{tikzpicture}[baseline=-\the\dimexpr\fontdimen22\textfont2\relax ]
      \node (TL) at (0,2)  {$\big(\Phi_Z(C,D)\times \Phi_Y(B,C)\big)\times 
            \Phi_X(A,B)$};
      \node (TM) at (8,2)  {$\Phi_{Z\otimes Y}(B,D)\times \Phi_X(A,B)$};
      \node (TR) at (8,0)  {$\Phi_{(Z\otimes Y)\otimes X}(A,D)$};
      \node (BL) at (0,0) {$\Phi_Z(C,D)\times \big(\Phi_Y(B,C)\times 
                  \Phi_X(A,B)\big)$};
      \node (BM) at (0,-2) {$\Phi_Z(C,D)\times\Phi_{Y\otimes X}(A,C)$};
      \node (BR) at (8,-2) {$\Phi_{Z\otimes (Y\otimes X)}(A,D).$};
      \draw[->] (TL) to node[auto,labelsize](T) 
      {$(\overline{\phi}_{Y,Z})_{B,C,D}\times\Phi_X(A,B)$} 
      (TM);
      \draw[->] (TM) to node[auto,labelsize](T) 
      {$(\overline{\phi}_{X,Z\otimes Y})_{A,B,D}$} 
      (TR);
      \draw[->]  (TR) to node[auto,labelsize] {$\cong$} (BR);
      \draw[->]  (TL) to node[auto,swap,labelsize] {$\cong$} (BL);
      \draw[->]  (BL) to node[auto,swap,labelsize] {$\Phi_Z(C,D)\times 
      (\overline{\phi}_{X,Y})_{A,B,C}$} 
      (BM);
      \draw[->] (BM) to node[auto,labelsize](B) {$(\overline{\phi}_{Y\otimes 
      X,Z})_{A,C,D}$} (BR);
\end{tikzpicture} 
\end{equation*}
We say that $(\cat{C},\Phi,\overline{\phi}_\cdot,\overline{\phi})$ is a 
metamodel of $\cat{M}$,
or that $(\Phi,\overline{\phi}_\cdot,\overline{\phi})$ is a 
metamodel of $\cat{M}$ in $\cat{C}$.
\end{definition}

The above definition perfectly makes sense even if we replace 
the category $\SET$ of large sets by the category $\Set$
of small sets. 
Indeed, most of the naturally occurring notions of model 
can be captured by such ``$\Set$-valued'' metamodels.
However, for theoretical development it turns out to be more 
convenient to define metamodels as above. 

Note that
we may replace $((\overline{\phi}_\cdot)_C)_{C\in\cat{C}}$ by 
\[
((j_C)_Z\colon \DayI(Z)\longrightarrow\Phi_{Z}(C,C))_{C\in\cat{C},Z\in\cat{M}}
\] 
and $((\overline{\phi}_{X,Y})_{A,B,C})_{X,Y\in\cat{M},A,B,C\in\cat{C}}$ by 
\[
((M_{A,B,C})_Z\colon (\Phi_{(-)}(B,C)\Dayo \Phi_{(-)}(A,B))(Z)
\longrightarrow \Phi_Z(A,C))_{A,B,C\in\cat{M},Z\in\cat{M}}.
\]
The axioms for metamodel then translate to the ones for 
enrichments (over $\widehat{\cat{M}}$).

On the other hand, we may also replace 
$((\overline{\phi}_\cdot)_C)_{C\in\cat{C}}$ by
\[
((\phi_\cdot)_{A,B}\colon\cat{C}(A,B)\longrightarrow\Phi_I(A,B))_{A,B\in\cat{C}}
\]
and $((\overline{\phi}_{X,Y})_{A,B,C})_{X,Y\in\cat{M},A,B,C\in\cat{C}}$ by
\[
((\phi_{X,Y})_{A,C}\colon (\Phi_Y\ptensorrev\Phi_X)(A,C)\longrightarrow
\Phi_{Y\otimes X}(A,C))_{A,C\in\cat{C},X,Y\in\cat{M}}.
\]
The axioms for metamodel then state that 
\[
(\Phi,\phi_\cdot,\phi)\colon(\cat{M}^\op,I,\otimes)\longrightarrow 
([\cat{C}^\op\times\cat{C},\SET],\cat{C}(-,-),\ptensorrev)
\]
is an oplax monoidal functor.

Hence the attempts to generalise enrichments and oplax actions mentioned above
coincide and both give rise to Definition~\ref{def:metamodel}.

\medskip

The definitions of model and homomorphism we derive from a metamodel 
are the following.
\begin{definition}
\label{def:metamodel_model}
Let $\cat{M}=(\cat{M},I,\otimes)$ be a metatheory, 
$\monoid{T}=(T,e,m)$ be a theory in $\cat{M}$, 
$\cat{C}$ be a large category and 
$\Phi=(\Phi,\overline{\phi}_\cdot,\overline{\phi})$ 
be a metamodel of $\cat{M}$ in $\cat{C}$.
\begin{enumerate}
\item A \defemph{model of $\monoid{T}$ in $\cat{C}$ with respect to $\Phi$}
is a pair $(C,\xi)$ consisting of an object $C$ of $\cat{C}$
and an element $\xi\in \Phi_T(C,C)$ such that 
$(\Phi_e)_{C,C}(\xi)=(\overline{\phi}_\cdot)_{C}(\ast)$ (where $\ast$
is the unique element of $1$)
and $(\Phi_m)_{C,C}(\xi)=(\overline{\phi}_{T,T})_{C,C,C}(\xi,\xi)$:
\[
\begin{tikzpicture}[baseline=-\the\dimexpr\fontdimen22\textfont2\relax ]
      \node(11) at (0,1.5) {$\Phi_T(C,C)$};
      \node(12) at (2,0) {$\Phi_I(C,C)$};
      \node(21) at (4,1.5) {$1$};
      
      \draw [->]  (11) to node [auto,swap,labelsize]{$(\Phi_e)_{C,C}$} (12);
      \draw [->]  (21) to node [auto,labelsize]{$(\overline{\phi}_\cdot)_{C}$} 
      (12);
\end{tikzpicture}
\quad
\begin{tikzpicture}[baseline=-\the\dimexpr\fontdimen22\textfont2\relax ]
      \node(11) at (0,1.5) {$\Phi_T(C,C)$};
      \node(12) at (2,0) {$\Phi_{T\otimes T}(C,C).$};
      \node(21) at (4,1.5) {$\Phi_T(C,C)\times \Phi_T(C,C)$};
      
      \draw [->]  (11) to node [auto,swap,labelsize]{$(\Phi_m)_{C,C}$} (12);
      \draw [->]  (21) to node 
      [auto,labelsize]{$(\overline{\phi}_{T,T})_{C,C,C}$} (12);
\end{tikzpicture}
\] 
\item Let $(C,\xi)$ and $(C',\xi')$ be models of $\monoid{T}$ in $\cat{C}$
with respect to $\Phi$. A \defemph{homomorphism from $(C,\xi)$ to $(C',\xi')$}
is a morphism $f\colon C\longrightarrow C'$ in $\cat{C}$ such that
$\Phi_T(C,f)(\xi)=\Phi_T(f,C')(\xi')$:
\[
\begin{tikzpicture}[baseline=-\the\dimexpr\fontdimen22\textfont2\relax ]
      \node(11) at (0,1.5) {$\Phi_T(C,C)$};
      \node(12) at (2,0) {$\Phi_T(C,C').$};
      \node(21) at (4,1.5) {$\Phi_T(C',C')$};
      
      \draw [->]  (11) to node [auto,swap,labelsize]{$\Phi_T(C,f)$} (12);
      \draw [->]  (21) to node [auto,labelsize]{$\Phi_T(f,C')$} (12);
\end{tikzpicture}
\] 
\end{enumerate}
We denote the (large) category of models of $\monoid{T}$ in $\cat{C}$ with 
respect to 
$\Phi$ by $\Mod{\monoid{T}}{(\cat{C},\Phi)}$.
\end{definition}

\begin{example}
\label{ex:enrichment_as_metamodel}
Let $\cat{M}=(\cat{M},I,\otimes)$ be a metatheory,
$\cat{C}$ be a large category and 
$(\enrich{-}{-},j,M)$ be an enrichment of $\cat{C}$ over $\cat{M}$.
This induces a metamodel $(\Phi,\overline{\phi}_\cdot,\overline{\phi})$ 
of $\cat{M}$ in $\cat{C}$ as follows.
\begin{itemize}
\item The functor 
$\Phi\colon\cat{M}^\op\times\cat{C}^\op\times\cat{C}\longrightarrow\SET$
maps $(X,A,B)\in \cat{M}^\op\times\cat{C}^\op\times\cat{C}$
to 
\[
\Phi_X(A,B)=\cat{M}(X,\enrich{A}{B}).
\]
\item For each $C\in\cat{C}$, $(\overline{\phi}_\cdot)_C\colon 
1\longrightarrow\Phi_I(C,C)$
is the name of $j_C$
(i.e., $(\overline{\phi}_\cdot)_C$ maps the unique element of the singleton 
$1$  to ${j_C}$).
\item For each $A,B,C\in\cat{C}$ and $X,Y\in\cat{M}$, the function
$(\overline{\phi}_{X,Y})_{A,B,C}\colon 
\Phi_Y(B,C)\times\Phi_X(A,B)
\longrightarrow\Phi_{Y\otimes X}(A,C)$
maps $g\colon Y\longrightarrow\enrich{B}{C}$ and $f\colon 
X\longrightarrow\enrich{A}{B}$ to 
\[
\begin{tikzpicture}[baseline=-\the\dimexpr\fontdimen22\textfont2\relax ]
      \node (L) at (0,0)  {$Y\otimes X$};
      \node (M) at (3,0)  {$\enrich{B}{C}\otimes \enrich{A}{B}$};
      \node (R) at (6,0) {$\enrich{A}{C}.$};
      \draw[->] (L) to node[auto,labelsize](T) {$g\otimes f$} 
      (M);
      \draw[->]  (M) to node[auto,labelsize] {$M_{A,B,C}$} (R);
\end{tikzpicture}
\]
\end{itemize}

The definition of model and homomorphism (Definition~\ref{def:enrich_model})
we derive from an enrichment may be seen as 
a special case of the 
corresponding definition (Definition~\ref{def:metamodel_model})
for metamodel.
\end{example}

\begin{example}
\label{ex:oplax_action_as_metamodel}
Let $\cat{M}=(\cat{M},I,\otimes)$ be a metatheory,
$\cat{C}$ be a large category and 
$(\ast,\varepsilon,\delta)$ be an oplax action of $\cat{M}$ on $\cat{C}$.
This induces a metamodel $(\Phi,\overline{\phi}_\cdot,\overline{\phi})$ 
of $\cat{M}$ in $\cat{C}$ as follows.
\begin{itemize}
\item The functor 
$\Phi\colon\cat{M}^\op\times\cat{C}^\op\times\cat{C}\longrightarrow\SET$
maps $(X,A,B)\in \cat{M}^\op\times\cat{C}^\op\times\cat{C}$
to 
\[
\Phi_X(A,B)=\cat{C}(X\ast A,{B}).
\]
\item For each $C\in\cat{C}$, $(\overline{\phi}_\cdot)_C\colon 
1\longrightarrow\Phi_I(C,C)$
is the name of $\varepsilon_C$.
\item For each $A,B,C\in\cat{C}$ and $X,Y\in\cat{M}$, the function
$(\overline{\phi}_{X,Y})_{A,B,C}\colon 
\Phi_Y(B,C)\times\Phi_X(A,B)
\longrightarrow\Phi_{Y\otimes X}(A,C)$
maps $g\colon Y\ast{B}\longrightarrow{C}$ and $f\colon 
X\ast{A}\longrightarrow{B}$ to 
\[
\begin{tikzpicture}[baseline=-\the\dimexpr\fontdimen22\textfont2\relax ]
      \node (L) at (-0.5,0)  {$(Y\otimes X)\ast A$};
      \node (M) at (3,0)  {$Y\ast(X\ast A)$};
      \node (R) at (6,0) {$Y\ast B$};
      \node (RR) at (8,0) {$C.$};
      \draw[->] (L) to node[auto,labelsize](T) {$\delta_{X,Y,A}$} 
      (M);
      \draw[->]  (M) to node[auto,labelsize] {$Y\ast f$} (R);
      \draw[->]  (R) to node[auto,labelsize] {$g$} (RR);
\end{tikzpicture}
\]
\end{itemize}

The definition of model and homomorphism 
(Definition~\ref{def:action_model})
we derive from an oplax action may be seen as a special case of the
corresponding definition (Definition~\ref{def:metamodel_model})
for metamodel.
\end{example}

\subsection{The 2-category of metamodels}
Metamodels of a metatheory naturally form a 2-category,
just as enrichments and oplax actions do.

\begin{definition}[{Cf.~\cite[Definitions 4.10 and 4.11]{Campbell_skew}}]
Let $\cat{M}=(\cat{M},I,\otimes)$ be a metatheory.
We define the (locally large) 2-category $\MtMod{\cat{M}}$ of 
metamodels of $\cat{M}$ as follows.
\begin{itemize}
\item An object is a metamodel 
$(\cat{C},\Phi,\overline{\phi}_\cdot,\overline{\phi})$ of $\cat{M}$.
\item A 1-cell from $(\cat{C},\Phi,\overline{\phi}_\cdot,\overline{\phi})$ to 
$(\cat{C'},\Phi',\overline{\phi'}_\cdot,\overline{\phi'})$
is a functor $G\colon \cat{C}\longrightarrow \cat{C'}$
together with a natural transformation $(g_{X,A,B}\colon 
\Phi_X(A,B)\longrightarrow \Phi'_X(GA,GB))_{X\in\cat{M},A,B\in\cat{C}}$ 
making the following 
diagrams commute for all 
$X,Y\in\cat{M}$ and $A,B,C\in\cat{C}$:
\begin{equation*}
\begin{tikzpicture}[baseline=-\the\dimexpr\fontdimen22\textfont2\relax ]
      \node (TL) at (0,1)  {$1$};
      \node (TR) at (4,1)  {$\Phi_I(C,C)$};
      \node (BR) at (4,-1) {$\Phi'_I(GC,GC)$};
      \draw[->] (TL) to node[auto,labelsize](T) {$(\overline{\phi}_\cdot)_{C}$} 
      (TR);
      \draw[->]  (TR) to node[auto,labelsize] {$g_{I,C,C}$} (BR);
      \draw[->]  (TL) to node[auto,swap,labelsize] 
      {$(\overline{\phi'}_\cdot)_{GC}$} 
      (BR);
\end{tikzpicture} 
\end{equation*}
\begin{equation*}
\begin{tikzpicture}[baseline=-\the\dimexpr\fontdimen22\textfont2\relax ]
      \node (TL) at (0,1)  {$\Phi_Y(B,C)\times \Phi_X(A,B)$};
      \node (TR) at (8,1)  {$\Phi_{Y\otimes X}(A,C)$};
      \node (BL) at (0,-1) {$\Phi'_Y(GB,GC)\times \Phi'_X(GA,GB)$};
      \node (BR) at (8,-1) {$\Phi'_{Y\otimes X}(GA,GC).$};
      \draw[->] (TL) to node[auto,labelsize](T) 
      {$(\overline{\phi}_{X,Y})_{A,B,C}$} 
      (TR);
      \draw[->]  (TR) to node[auto,labelsize] {$g_{Y\otimes X,A,C}$} (BR);
      \draw[->]  (TL) to node[auto,swap,labelsize] {$g_{Y,B,C}\times 
      g_{X,A,B}$} 
      (BL);
      \draw[->]  (BL) to node[auto,labelsize] 
      {$(\overline{\phi'}_{X,Y})_{GA,GB,GC}$} (BR);
\end{tikzpicture} 
\end{equation*}
\item A 2-cell from $(G,g)$ to $(G',g')$, both from 
$(\cat{C},\Phi,\overline{\phi}_\cdot,\overline{\phi})$ to 
$(\cat{C'},\Phi',\overline{\phi'}_\cdot,\overline{\phi'})$,
is a natural transformation $\theta\colon G\Longrightarrow G'$
making the following diagram commute for all $X\in\cat{M}$
and $A,B\in\cat{C}$:
\begin{equation*}
\begin{tikzpicture}[baseline=-\the\dimexpr\fontdimen22\textfont2\relax ]
      \node (TL) at (0,2)  {$\Phi_X(A,B)$};
      \node (TR) at (6,2)  {$\Phi'_X(GA,GB)$};
      \node (BL) at (0,0) {$\Phi'_X(G'A,G'B)$};
      \node (BR) at (6,0) {$\Phi'_X(GA,G'B).$};
      \draw[->] (TL) to node[auto,labelsize](T) {$g_{X,A,B}$} 
      (TR);
      \draw[->]  (TR) to node[auto,labelsize] {$\Phi'_X(GA,\theta_B)$} (BR);
      \draw[->]  (TL) to node[auto,swap,labelsize] {$g'_{X,A,B}$} 
      (BL);
      \draw[->]  (BL) to node[auto,labelsize] {$\Phi'_X(\theta_A,G'B)$} (BR);
\end{tikzpicture}  
\end{equation*}
\end{itemize}
\end{definition}

Recall that for a functor (resp.~a 2-functor) 
$F\colon \cat{A}\longrightarrow\cat{B}$,
the \defemph{essential image of $F$} is the full subcategory
(resp.~full sub-2-category) of $\cat{B}$
consisting of all objects $B\in\cat{B}$ such that there exists an object 
$A\in\cat{A}$ and an isomorphism $FA\cong B$.
If $\cat{A}$ is a large category, a contravariant presheaf
$\cat{A}^\op\longrightarrow\SET$ 
(resp.~a covariant presheaf $\cat{A}\longrightarrow\SET$) 
over $\cat{A}$ is 
called \defemph{representable} if and only if it is 
in the essential image of the Yoneda embedding 
$\cat{A}\longrightarrow[\cat{A}^\op,\SET]$
(resp.~$\cat{A}\longrightarrow [\cat{A},\SET]^\op$).

\begin{proposition}[{Cf.~\cite[Proposition 4.12 (b)]{Campbell_skew}}]
\label{prop:enrichment_as_metamodel}
Let $\cat{M}$ be a metatheory.
The construction given in Example~\ref{ex:enrichment_as_metamodel}
canonically extends to a fully faithful 2-functor
\[
\Enrich{\cat{M}}\longrightarrow\MtMod{\cat{M}}.
\]
A metamodel $(\cat{C},\Phi,\overline{\phi}_\cdot,\overline{\phi})$
of $\cat{M}$ is in the essential image of this 2-functor if and only if 
for each $A,B\in\cat{C}$, the functor 
\[
\Phi_{(-)}(A,B)\colon\cat{M}^\op\longrightarrow\SET
\]
is representable.
\end{proposition}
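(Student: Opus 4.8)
The plan is to reduce the statement to a general fact about base change of enrichments along a fully faithful strong monoidal functor, applied to the Yoneda embedding. Recall from Section~\ref{sec:metamodel_model} that, via the Day convolution monoidal structure $(\DayI,\Dayo)$, a metamodel of $\cat{M}$ is literally the same thing as an enrichment over $\widehat{\cat{M}}=[\cat{M}^\op,\SET]$, so that $\MtMod{\cat{M}}=\Enrich{\widehat{\cat{M}}}$; under this identification the metamodel associated in Example~\ref{ex:enrichment_as_metamodel} to an enrichment $(\enrich{-}{-},j,M)$ has hom-object $\cat{M}(-,\enrich{A}{B})=y\enrich{A}{B}$ at $(A,B)$, where $y\colon\cat{M}\longrightarrow\widehat{\cat{M}}$ is the Yoneda embedding, and its unit and multiplication are exactly the images of $j$ and $M$ under the (strong monoidal) structure of $y$. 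In other words, the construction of Example~\ref{ex:enrichment_as_metamodel} is nothing but base change along $y$ in the sense of Definition~\ref{def:action_of_lax_on_enrichment} (whose statement, as remarked, is insensitive to size conditions), and it is this observation that I would record first, together with the routine verification that it is 2-functorial on 1-cells and 2-cells.

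With this in hand, the core of the proof is the following lemma, which I would prove in the generality of an arbitrary fully faithful strong monoidal functor $F\colon\cat{M}\longrightarrow\cat{N}$: the induced 2-functor $\Enrich{F}\colon\Enrich{\cat{M}}\longrightarrow\Enrich{\cat{N}}$ is fully faithful, and a $\cat{N}$-enrichment lies in its essential image if and only if each of its hom-objects lies in the essential image of $F$. Full faithfulness is a direct consequence of $F$ being full and faithful: a $1$-cell between two $F_\ast$-enrichments consists of a functor $G$ together with a natural family $F\enrich{A}{B}\longrightarrow F\enrich{GA}{GB}'$, which by fullness and faithfulness of $F$ is uniquely of the form $Fg_{A,B}$; faithfulness of $F$ together with the fact that its monoidal structure maps are isomorphisms turns the compatibility axioms for this family (with the pushed-forward $j$ and $M$) into exactly the compatibility axioms making $(G,g)$ a $1$-cell of $\Enrich{\cat{M}}$, and likewise the $2$-cell condition in $\Enrich{\cat{N}}$ collapses, by faithfulness of $F$, to the one in $\Enrich{\cat{M}}$. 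For the essential image, necessity is clear since an invertible $1$-cell has invertible functor component and invertible hom-components; for sufficiency I would choose for each $A,B$ an object $\enrich{A}{B}$ of $\cat{M}$ and an isomorphism between $F\enrich{A}{B}$ and the given hom-object, transport the $\cat{N}$-enrichment structure along these isomorphisms to an isomorphic one whose hom-objects are literally $F\enrich{A}{B}$, and then use fullness and faithfulness of $F$ to lift the transported hom-functor, unit and multiplication to structure morphisms of $\cat{M}$, checking the enrichment axioms downstairs by applying $F$ and invoking its faithfulness and strong monoidal coherence; by construction this $\cat{M}$-enrichment is carried by $\Enrich{F}$ to the transported $\cat{N}$-enrichment.

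Instantiating the lemma at $F=y$ then finishes the proof: $\Enrich{y}\colon\Enrich{\cat{M}}\longrightarrow\Enrich{\widehat{\cat{M}}}=\MtMod{\cat{M}}$ is fully faithful, and since the essential image of the Yoneda embedding is precisely the class of representable presheaves, a metamodel $(\cat{C},\Phi,\overline{\phi}_\cdot,\overline{\phi})$ is in its essential image exactly when each hom-presheaf $\Phi_{(-)}(A,B)\colon\cat{M}^\op\longrightarrow\SET$ is representable, which is the assertion. I expect the only step demanding genuine care --- the main obstacle --- to be the ``descent'' argument in the essential-image half of the lemma: verifying that when the hom-objects of an $\cat{N}$-enrichment factor through a fully faithful strong monoidal $F$, the whole enrichment (functoriality in both variables, unitality, associativity) descends along $F$. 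This is routine, but it requires threading the strong monoidal structure isomorphisms of $F$ through every diagram and repeatedly using that $F$ reflects commutativity; one could instead bypass $\widehat{\cat{M}}$ and argue directly with the coend formulas behind Definition~\ref{def:metamodel} and the Yoneda lemma for $\cat{M}$, but the bookkeeping is essentially the same.
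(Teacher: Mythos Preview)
Your approach is correct and takes a more conceptual route than the paper's. The paper argues directly: it declares the construction of the 2-functor straightforward, and for the essential image it picks representing objects $\enrich{A}{B}$ together with isomorphisms $\alpha_{A,B}\colon\cat{M}(-,\enrich{A}{B})\cong\Phi_{(-)}(A,B)$, reads off $j_C$ from $(\overline{\phi}_\cdot)_C$, defines $M_{A,B,C}$ by evaluating a specific composite at $(\id{\enrich{B}{C}},\id{\enrich{A}{B}})$, and then checks by an explicit naturality chase that the metamodel induced from this enrichment recovers each $(\overline{\phi}_{X,Y})_{A,B,C}$ as $M_{A,B,C}\circ(g\otimes f)$. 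You instead factor the entire statement through the identification $\MtMod{\cat{M}}\cong\Enrich{\widehat{\cat{M}}}$ and a reusable lemma about base change along an arbitrary fully faithful strong monoidal functor, then instantiate at the Yoneda embedding. Your route explains \emph{why} the result holds and produces a general lemma as a by-product; the paper's route is more hands-on, making the reconstruction of the enrichment data completely explicit --- an explicitness it then recycles verbatim in the companion Proposition~\ref{prop:oplax_action_as_metamodel}. One organisational point to watch: the 2-categorical isomorphism $\MtMod{\cat{M}}\cong\Enrich{\widehat{\cat{M}}}$ is only recorded formally as Proposition~\ref{prop:metatheory_as_enrichment}, which appears \emph{after} the present proposition, so if you follow your plan you should either verify the 1- and 2-cell correspondence yourself at this point (it is just as routine as the object-level one already discussed) or reorder the exposition.
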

\begin{proof}
The construction of the 
2-functor $\Enrich{\cat{M}}\longrightarrow\MtMod{\cat{M}}$
is straightforward. 
The rest can also be proved by a standard argument using the Yoneda lemma.
We sketch the argument below.

Let us focus on the characterisation of the essential image.
Suppose that $(\cat{C},\Phi,\overline{\phi}_\cdot,\overline{\phi})$ is a 
metamodel of $\cat{M}$ such that
for each $A,B\in\cat{C}$, the functor $\Phi_{(-)}(A,B)$
is representable.
From such a metamodel we obtain an enrichment $(\enrich{-}{-},j,M)$ of 
$\cat{C}$ over $\cat{M}$ as follows. 
For each $A,B\in\cat{C}$, choose an object $\enrich{A}{B}\in\cat{M}$ and an 
isomorphism 
$\alpha_{A,B}\colon\cat{M}(-,\enrich{A}{B})\longrightarrow\Phi_{(-)}(A,B)$.
By functoriality of $\Phi$, $\enrich{-}{-}$ uniquely extends to a 
functor of type 
$\cat{C}^\op\times\cat{C}\longrightarrow\cat{M}$ while making 
$(\alpha_{A,B})_{A,B\in\cat{C}}$ natural.
For each $C\in\cat{C}$, $(\overline{\phi}_\cdot)_C\colon 1\longrightarrow
\Phi_I(C,C)\cong \cat{M}(I,\enrich{C}{C})$ gives rise to a 
morphism $j_C\colon I\longrightarrow\enrich{C}{C}$ in $\cat{M}$. 
For each $A,B,C\in\cat{M}$, consider the function
\[
\begin{tikzpicture}[baseline=-\the\dimexpr\fontdimen22\textfont2\relax ]
      \node (TT) at (0,4.5)  
      {$\cat{M}(\enrich{B}{C},\enrich{B}{C})\times\cat{M}(\enrich{A}{B},
      \enrich{A}{B})$};
      \node (T) at (0,3)  
      {$\Phi_{\enrich{B}{C}}(B,C)\times\Phi_\enrich{A}{B}(A,B)$};
      \node (L) at (0,1.5)  {$\Phi_{\enrich{B}{C}\otimes\enrich{A}{B}}(A,C)$};
      \node (LL) at (0,0) {$\cat{M}(\enrich{B}{C}\otimes\enrich{A}{B}, 
      \enrich{A}{C}).$};
      \draw[->] (T) to node[auto,labelsize]
      {$(\overline{\phi}_{\enrich{A}{B},\enrich{B}{C}})_{A,B,C}$} 
      (L);
      \draw[->] (TT) to node[auto,labelsize]{$(\alpha_{B,C})_\enrich{B}{C}
      \times(\alpha_{A,B})_{\enrich{A}{B}}$} (T);
      \draw[->] (L) to 
      node[auto,labelsize]{$(\alpha_{A,C})_{\enrich{B}{C}
      \otimes\enrich{A}{B}}^{-1}$}
       (LL);
\end{tikzpicture}
\]
Let the image of $(\id{\enrich{B}{C}},\id{\enrich{A}{B}})$ 
under this function be $M_{A,B,C}\colon 
\enrich{B}{C}\otimes\enrich{A}{B}\longrightarrow\enrich{A}{C}$.
The axioms of metamodel then shows that $(\enrich{-}{-},j,M)$ is an enrichment.

Moreover, if we consider the metamodel induced from this enrichment 
(see Example~\ref{ex:enrichment_as_metamodel}), then it is 
isomorphic to our original 
$(\cat{C},\Phi,\overline{\phi}_\cdot,\overline{\phi})$.
In particular, for each $X,Y\in\cat{M}$ and $A,B,C\in\cat{C}$, 
the function $(\overline{\phi}_{X,Y})_{A,B,C}$ is completely
determined by $M_{A,B,C}$, as in 
Example~\ref{ex:enrichment_as_metamodel}.
To see this, note that for each $f\in\cat{M}(X,\enrich{A}{B})$
and $g\in\cat{M}(Y,\enrich{B}{C})$, the diagram 
\[
\begin{tikzpicture}[baseline=-\the\dimexpr\fontdimen22\textfont2\relax ]
      \node (TT1) at (7.5,4.5)  
      {$\cat{M}(Y,\enrich{B}{C})\times\cat{M}(X,\enrich{A}{B})$};
      \node (T1) at (7.5,3)  
      {$\Phi_Y(B,C)\times\Phi_X(A,B)$};
      \node (L1) at (7.5,1.5)  
      {$\Phi_{Y\otimes X}(A,C)$};
      \node (LL1) at (7.5,0) {$\cat{M}(\enrich{B}{C}\otimes\enrich{A}{B}, 
      \enrich{A}{C})$};
      \node (TT) at (0,4.5)  
      {$\cat{M}(\enrich{B}{C},\enrich{B}{C})\times\cat{M}(\enrich{A}{B},
      \enrich{A}{B})$};
      \node (T) at (0,3)  
      {$\Phi_{\enrich{B}{C}}(B,C)\times\Phi_\enrich{A}{B}(A,B)$};
      \node (L) at (0,1.5)  {$\Phi_{\enrich{B}{C}\otimes\enrich{A}{B}}(A,C)$};
      \node (LL) at (0,0) {$\cat{M}(\enrich{B}{C}\otimes\enrich{A}{B}, 
      \enrich{A}{C})$};
      \draw[->] (T) to node[auto,swap,labelsize]
      {$(\overline{\phi}_{\enrich{A}{B},\enrich{B}{C}})_{A,B,C}$} 
      (L);
      \draw[->] (TT) to 
      node[auto,swap,labelsize]{$\alpha
      \times\alpha$} (T);
      \draw[->] (L) to 
      node[auto,swap,labelsize]{$\alpha^{-1}$}
       (LL);
      \draw[->] (T1) to node[auto,labelsize]
      {$(\overline{\phi}_{X,Y})_{A,B,C}$} 
      (L1);
      \draw[->] (TT1) to 
      node[auto,labelsize]{$\alpha \times\alpha$} (T1);
      \draw[->] (L1) to 
      node[auto,labelsize]{$\alpha^{-1}$}
       (LL1);
      \draw[->] (TT) to node[auto,labelsize]
      {$\cat{M}(g,\id{})\times\cat{M}(f,\id{})$} 
      (TT1);
      \draw[->] (T) to node[auto,labelsize]
      {$\Phi_g(B,C)\times\Phi_f(A,B)$} 
      (T1);
      \draw[->] (L) to node[auto,labelsize]
      {$\Phi_{g\otimes f}(A,C)$} 
      (L1);
      \draw[->] (LL) to node[auto,labelsize]
      {$\cat{M}(g\otimes f,\id{})$} 
      (LL1);
\end{tikzpicture}
\]
commutes. Hence by chasing the element $(\id{\enrich{B}{C}},\id{\enrich{A}{B}})$
in the top left set, we observe that (modulo the isomorphisms $\alpha$)
$(g,f)$ is mapped by $(\overline{\phi}_{X,Y})_{A,B,C}$ to 
$M_{A,B,C}\circ (g\otimes f)$.
\end{proof}

\begin{proposition}[{Cf.~\cite[Proposition 4.2 (a)]{Campbell_skew}}]
\label{prop:oplax_action_as_metamodel}
Let $\cat{M}$ be a metatheory.
The construction given in Example~\ref{ex:oplax_action_as_metamodel}
canonically extends to a fully faithful 2-functor
\[
\olAct{\cat{M}}\longrightarrow\MtMod{\cat{M}}.
\]
A metamodel $(\cat{C},\Phi,\overline{\phi}_\cdot,\overline{\phi})$ of $\cat{M}$ 
is in the essential image of this 2-functor if and only if for each
$X\in\cat{M}$ and $A\in\cat{C}$, the functor 
\[
\Phi_X(A,-)\colon \cat{C}\longrightarrow\SET
\]
is representable.
\end{proposition}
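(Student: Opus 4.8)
The plan is to mimic, \emph{mutatis mutandis}, the proof of Proposition~\ref{prop:enrichment_as_metamodel}, replacing the Yoneda embedding of $\cat{M}$ (which governs enrichments) by representability of the functors $\Phi_X(A,-)\colon\cat{C}\longrightarrow\SET$ (which governs oplax actions). First I would extend the construction of Example~\ref{ex:oplax_action_as_metamodel} to 1-cells and 2-cells. A 1-cell $(G,g)$ of $\olAct{\cat{M}}$, with structure $g_{X,C}\colon X\ast' GC\longrightarrow G(X\ast C)$, is sent to the pair $(G,g')$ where $g'_{X,A,B}\colon\Phi_X(A,B)=\cat{C}(X\ast A,B)\longrightarrow\cat{C'}(X\ast' GA,GB)=\Phi'_X(GA,GB)$ maps $h$ to $Gh\circ g_{X,A}$; a 2-cell $\theta\colon G\Rightarrow G'$ is sent to itself. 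Checking that $(G,g')$ satisfies the two coherence squares for a 1-cell of $\MtMod{\cat{M}}$, and that $\theta$ satisfies the 2-cell condition, is a routine diagram chase from the corresponding axioms in $\olAct{\cat{M}}$, and functoriality of these assignments (composition and identities) is immediate. This produces the 2-functor $\olAct{\cat{M}}\longrightarrow\MtMod{\cat{M}}$.

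Next, full faithfulness. Given oplax actions inducing metamodels $\Phi,\Phi'$ on $\cat{C},\cat{C'}$, a 1-cell of metamodels over a fixed functor $G$ is a natural family $g'_{X,A,B}\colon\cat{C}(X\ast A,B)\longrightarrow\cat{C'}(X\ast' GA,GB)$. By the Yoneda lemma (naturality in $B$) such a family is determined by the images of the identities $\id{X\ast A}$, i.e.\ by morphisms $g_{X,A}\colon X\ast' GA\longrightarrow G(X\ast A)$, and unwinding the two metamodel 1-cell axioms along these identities yields precisely the two coherence diagrams in the definition of a 1-cell of $\olAct{\cat{M}}$; the 2-cell condition matches verbatim. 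Hence the hom-functors of the 2-functor are isomorphisms of categories, so it is fully faithful.

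For the essential image, one direction is trivial: if $\Phi$ arises from an oplax action then $\Phi_X(A,-)=\cat{C}(X\ast A,-)$ is representable. Conversely, suppose $(\cat{C},\Phi,\overline{\phi}_\cdot,\overline{\phi})$ is a metamodel with $\Phi_X(A,-)$ representable for all $X\in\cat{M}$, $A\in\cat{C}$. Choose objects $X\ast A\in\cat{C}$ with isomorphisms $\beta_{X,A}\colon\cat{C}(X\ast A,-)\longrightarrow\Phi_X(A,-)$. By functoriality of $\Phi$ and the Yoneda lemma, $(X,A)\mapsto X\ast A$ extends uniquely to a functor $\ast\colon\cat{M}\times\cat{C}\longrightarrow\cat{C}$ making $\beta$ natural in all three variables; the component $(\overline{\phi}_\cdot)_C\colon 1\longrightarrow\Phi_I(C,C)\cong\cat{C}(I\ast C,C)$ names a morphism $\varepsilon_C\colon I\ast C\longrightarrow C$, and evaluating $(\overline{\phi}_{X,Y})_{A,B,C}$ on identities exactly as in the proof of Proposition~\ref{prop:enrichment_as_metamodel} produces $\delta_{X,Y,C}\colon (Y\otimes X)\ast C\longrightarrow Y\ast(X\ast C)$. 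Transporting the three metamodel axioms of Definition~\ref{def:metamodel} along the $\beta$'s gives the three coherence diagrams of Definition~\ref{def:oplax_action}, and a final Yoneda check shows the metamodel induced from $(\cat{C},\ast,\varepsilon,\delta)$ is canonically isomorphic to the original $\Phi$ (in particular $\overline{\phi}_{X,Y}$ is recovered from $\delta$).

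The point requiring the most care is the variance bookkeeping: since $\Phi$ is contravariant in the $\cat{M}$-variable and in the first $\cat{C}$-variable, I must verify that the chosen representing objects assemble into a genuinely \emph{covariant} bifunctor $\ast\colon\cat{M}\times\cat{C}\longrightarrow\cat{C}$ (the two contravariances cancelling against the contravariant Yoneda embedding), and that the naturality squares of $\overline{\phi}$ line up with the associator of $\cat{M}$ so as to give $\delta$ with the orientation $(Y\otimes X)\ast C\to Y\ast(X\ast C)$ fixed in Definition~\ref{def:oplax_action}, rather than its reverse. Apart from this, the argument is entirely parallel to Proposition~\ref{prop:enrichment_as_metamodel}, so I would present the common Yoneda manipulations by reference to that proof instead of rewriting every diagram.
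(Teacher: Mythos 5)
Your proposal is correct and follows essentially the same route as the paper: the paper's proof also proceeds by mimicking Proposition~\ref{prop:enrichment_as_metamodel}, choosing representing objects $X\ast A$ with isomorphisms $\beta_{X,A}\colon\cat{C}(X\ast A,-)\to\Phi_X(A,-)$, extracting $\varepsilon$ from $\overline{\phi}_\cdot$ and $\delta_{X,Y,A}$ as the image of the pair of identities under $\overline{\phi}_{X,Y}$, and then verifying by a naturality/element chase that $(\overline{\phi}_{X,Y})_{A,B,C}(g,f)=g\circ(Y\ast f)\circ\delta_{X,Y,A}$, so that the induced metamodel is isomorphic to the original. Your Yoneda argument for full faithfulness (recovering $g_{X,A}$ from the image of $\id{X\ast A}$) is the same standard argument the paper leaves implicit, so there is nothing to add.
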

\begin{proof}
Similar to the proof of Proposition~\ref{prop:enrichment_as_metamodel}.

In particular, given a metamodel 
$(\cat{C},\Phi,\overline{\phi}_\cdot,\overline{\phi})$ of $\cat{M}$ such that
for each $X\in\cat{M}$ and $A\in\cat{C}$, the functor $\Phi_X(A,-)$ is 
representable, we may construct an oplax action $(\ast,\varepsilon,\delta)$ of 
$\cat{C}$ as follows.
For each $X\in\cat{M}$ and $A\in\cat{C}$, choose an object $X\ast A\in\cat{C}$
and an isomorphism $\beta_{X,A}\colon \cat{C}(X\ast A,-) 
\longrightarrow\Phi_X(A,-)$.
We easily obtain a functor 
$\ast\colon\cat{M}\times\cat{C}\longrightarrow\cat{C}$
and a natural transformation $(\varepsilon_C)_{C\in\cat{C}}$. 
To get $\delta$, for each $X,Y\in\cat{M}$ and $A\in\cat{C}$ consider 
the function
\[
\begin{tikzpicture}[baseline=-\the\dimexpr\fontdimen22\textfont2\relax ]
      \node (TT) at (0,4.5)  
      {$\cat{C}(Y\ast (X\ast A),Y\ast (X\ast A))\times\cat{C}
      (X\ast A,X\ast A)$};
      \node (T) at (0,3)  
      {$\Phi_{Y}(X\ast A,Y\ast (X\ast A))\times\Phi_X(A,X\ast A)$};
      \node (L) at (0,1.5)  {$\Phi_{Y\otimes X}(A,Y\ast (X\ast A))$};
      \node (LL) at (0,0) {$\cat{C}((Y\otimes X)\ast A, 
      Y\ast (X\ast A)).$};
      \draw[->] (T) to node[auto,labelsize]
      {$(\overline{\phi}_{X,Y})_{A,X\ast A,Y\ast (X\ast A)}$}
      (L);
      \draw[->] (TT) to node[auto,labelsize]{$(\beta_{Y,X\ast A})_{Y\ast (X\ast 
      A)}
      \times(\beta_{X,A})_{X\ast A}$} (T);
      \draw[->] (L) to 
      node[auto,labelsize]{$(\beta_{Y\otimes X,A})_{Y\ast(X\ast A)}^{-1}$}
       (LL);
\end{tikzpicture}
\]
We define $\delta_{X,Y,A}\colon (Y\otimes X)\ast A\longrightarrow Y\ast(X\ast 
A)$ to be the image of $(\id{Y\ast (X\ast A)},\id{X\ast A})$ under this 
function.

To verify that the metamodel induced from this oplax action (see 
Example~\ref{ex:oplax_action_as_metamodel}) is isomorphic to 
$(\cat{C},\Phi,\overline{\phi}_\cdot,\overline{\phi})$,
essentially we only need to check that $(\overline{\phi}_{X,Y})_{A,B,C}$
for each $X,Y\in\cat{M}$ and $A,B,C\in\cat{C}$ is determined by
$\delta_{X,Y,A}$ as in Example~\ref{ex:oplax_action_as_metamodel}.
Suppressing the isomorphisms $\beta$ from now on,
for each $f\colon X\ast A\longrightarrow B$ and $g\colon Y\ast B\longrightarrow 
C$ consider the following digram:
\[
\begin{tikzpicture}[baseline=-\the\dimexpr\fontdimen22\textfont2\relax ]
      \node (TT1) at (7.5,4)  
      {$\cat{C}((Y\otimes X)\ast A,Y\ast (X\ast A))$};
      \node (T1) at (7.5,2)  
      {$\cat{C}((Y\otimes X)\ast A,C)$};
      \node (L1) at (7.5,0)  
      {$\cat{C}(Y\ast B,C)\times \cat{C}(X\ast A,B).$};
      \node (TT) at (0,4)  
      {$\cat{C}(Y\ast (X\ast A),Y\ast (X\ast A)) \times 
      \cat{C}(X\ast A,X\ast A)$};
      \node (T) at (0,2)  
      {$\cat{C}(Y\ast (X\ast A),C)\times \cat{C}(X\ast A,X\ast A)$};
      \node (L) at (0,0)  {$\cat{C}(Y\ast B,C)\times \cat{C}(X\ast A,X\ast A)$};
      \draw[<-] (T) to node[auto,swap,labelsize]
      {$\cat{C}(Y\ast f,\id{})\times \cat{C}(\id{},\id{})$} 
      (L);
      \draw[->] (TT) to 
      node[auto,swap,labelsize]{$\cat{C}(\id{},g\circ(Y\ast f))\times 
      \cat{C}(\id{},\id{})$} 
      (T);
      \draw[<-] (T1) to node[auto,labelsize]
      {$(\overline{\phi}_{X,Y})_{A,B,C}$} 
      (L1);
      \draw[->] (TT1) to 
      node[auto,labelsize]{$\cat{C}(\id{},g\circ(Y\ast f))$} (T1);
      \draw[->] (TT) to node[auto,labelsize]
      {$(\overline{\phi}_{X,Y})_{A,X\ast A,Y\ast(X\ast A)}$} 
      (TT1);
      \draw[->] (T) to node[auto,labelsize]
      {$(\overline{\phi}_{X,Y})_{A,X\ast A,C}$} 
      (T1);
      \draw[->] (L) to node[auto,labelsize]
      {$\cat{C}(\id{},\id{})\times \cat{C}(\id{},f)$} 
      (L1);
\end{tikzpicture}
\]
The top square commutes by naturality in $C$ of 
$((\overline{\phi}_{X,Y})_{A,B,C})$ and the bottom square commutes by 
(extra) naturality in $B$ of it.
By chasing the appropriate elements as follows
\[
\begin{tikzpicture}[baseline=-\the\dimexpr\fontdimen22\textfont2\relax ]
      \node (TT1) at (6.5,3)  
      {$\delta_{X,Y,A}$};
      \node (T1) at (6.5,1.5)  
      {$g\circ(Y\ast f)\circ \delta_{X,Y,A}$};
      \node (L1) at (6.5,0)  
      {$(g,f),$};
      \node (TT) at (0,3)  
      {$(\id{Y\ast (X\ast A)},\id{X\ast A})$};
      \node (T) at (0,1.5)  
      {$(g\circ (Y\ast f), \id{X\ast A})$};
      \node (L) at (0,0)  {$(g,\id{X\ast A})$};
      \draw[<-|] (T) to node[auto,swap,labelsize]
      {$\cat{C}(Y\ast f,\id{})\times \cat{C}(\id{},\id{})$} 
      (L);
      \draw[|->] (TT) to 
      node[auto,swap,labelsize]{$\cat{C}(\id{},g\circ(Y\ast f))\times 
      \cat{C}(\id{},\id{})$} 
      (T);
      \draw[<-|] (T1) to node[auto,labelsize]
      {$(\overline{\phi}_{X,Y})_{A,B,C}$} 
      (L1);
      \draw[|->] (TT1) to 
      node[auto,labelsize]{$\cat{C}(\id{},g\circ(Y\ast f))$} (T1);
      \draw[|->] (TT) to node[auto,labelsize]
      {$(\overline{\phi}_{X,Y})_{A,X\ast A,Y\ast(X\ast A)}$} 
      (TT1);
      \draw[|->] (T) to node[auto,labelsize]
      {$(\overline{\phi}_{X,Y})_{A,X\ast A,C}$} 
      (T1);
      \draw[|->] (L) to node[auto,labelsize]
      {$\cat{C}(\id{},\id{})\times \cat{C}(\id{},f)$} 
      (L1);
\end{tikzpicture}
\]
we conclude that $(\overline{\phi}_{X,Y})_{A,B,C}(g,f)=g\circ(Y\ast 
f)\circ\delta_{X,Y,A}$, as desired.
\end{proof}

Recall the 2-categories $\Enrichr{\cat{M}}$ and $\olActl{\cat{M}}$
defined in Definition~\ref{def:enrichr_olactl}.

\begin{corollary}[{Cf.~\cite[Corollary 4.13]{Campbell_skew}}]
\label{cor:enrichr_olactl_equiv}
Let $\cat{M}$ be a metatheory.
\begin{enumerate}
\item The 2-functors in Proposition~\ref{prop:enrichment_as_metamodel}
and Proposition~\ref{prop:oplax_action_as_metamodel} restrict to 
fully faithful 2-functors 
\[
\Enrichr{\cat{M}}\longrightarrow\MtMod{\cat{M}}\qquad
\olActl{\cat{M}}\longrightarrow\MtMod{\cat{M}}
\]
with the same essential image characterised as follows:
a metamodel $(\cat{C},\Phi,\overline{\phi}_\cdot,\overline{\phi})$ 
of $\cat{M}$ is in the essential image if and only if
for each $X\in\cat{M}$ and $A,B\in\cat{C}$,
the functors 
\[
\Phi_{(-)}(A,B)\colon\cat{M}^\op\longrightarrow\SET\qquad
\Phi_X(A,-)\colon\cat{C}\longrightarrow\SET
\] 
are representable.
\item The 2-categories $\Enrichr{\cat{M}}$ and $\olActl{\cat{M}}$ are 
equivalent.
\end{enumerate}
\end{corollary}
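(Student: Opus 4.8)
The plan is to deduce the whole statement from Propositions~\ref{prop:enrichment_as_metamodel} and \ref{prop:oplax_action_as_metamodel} together with the elementary dictionary between adjunctions and representability, so that essentially no fresh computation is needed.

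First I would handle part~(1). Since $\Enrichr{\cat{M}}$ and $\olActl{\cat{M}}$ are \emph{full} sub-2-categories of $\Enrich{\cat{M}}$ and $\olAct{\cat{M}}$, the hom-categories of the restricted 2-functors are among those of the 2-functors of Propositions~\ref{prop:enrichment_as_metamodel} and \ref{prop:oplax_action_as_metamodel}; hence the restrictions stay fully faithful, and only the essential images need to be identified. Here the point is variance bookkeeping. For an enrichment $(\cat{C},\enrich{-}{-},j,M)$ with associated metamodel $\Phi_X(A,B)=\cat{M}(X,\enrich{A}{B})$ (Example~\ref{ex:enrichment_as_metamodel}) and a fixed $C\in\cat{C}$, the covariant functor $\enrich{C}{-}\colon\cat{C}\longrightarrow\cat{M}$ has a left adjoint if and only if, for each $X\in\cat{M}$, the functor $B\longmapsto\cat{M}(X,\enrich{C}{B})=\Phi_X(C,B)$ is representable; so $(\cat{C},\enrich{-}{-},j,M)\in\Enrichr{\cat{M}}$ exactly when $\Phi_X(A,-)\colon\cat{C}\longrightarrow\SET$ is representable for all $X\in\cat{M}$ and $A\in\cat{C}$. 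Combining this with the essential image from Proposition~\ref{prop:enrichment_as_metamodel} (that $\Phi_{(-)}(A,B)\colon\cat{M}^\op\longrightarrow\SET$ be representable for all $A,B$) shows that the essential image of $\Enrichr{\cat{M}}\longrightarrow\MtMod{\cat{M}}$ consists of those metamodels for which both families $\Phi_{(-)}(A,B)$ and $\Phi_X(A,-)$ are representable. Dually, for an oplax action $(\cat{C},\ast,\varepsilon,\delta)$ with $\Phi_X(A,B)=\cat{C}(X\ast A,B)$ (Example~\ref{ex:oplax_action_as_metamodel}) and a fixed $C$, the functor $-\ast C\colon\cat{M}\longrightarrow\cat{C}$ has a right adjoint if and only if, for each $B\in\cat{C}$, the contravariant functor $X\longmapsto\cat{C}(X\ast C,B)=\Phi_X(C,B)$ is representable; so membership in $\olActl{\cat{M}}$ is equivalent to representability of $\Phi_{(-)}(A,B)\colon\cat{M}^\op\longrightarrow\SET$ for all $A,B$. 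Intersecting this with the essential image from Proposition~\ref{prop:oplax_action_as_metamodel} (that $\Phi_X(A,-)$ be representable for all $X,A$) yields \emph{literally the same} description, which proves~(1). (Throughout one uses that representability of $\Phi_X(A,-)$ or of $\Phi_{(-)}(A,B)$ is a property of the metamodel, preserved under isomorphism of metamodels, so it is insensitive to the chosen enrichment or oplax action representing it.)

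For part~(2), let $\mathscr{D}$ denote this common essential image, a full sub-2-category of $\MtMod{\cat{M}}$. By~(1) both corestricted 2-functors $\Enrichr{\cat{M}}\longrightarrow\mathscr{D}$ and $\olActl{\cat{M}}\longrightarrow\mathscr{D}$ are fully faithful and essentially surjective on objects, hence equivalences of 2-categories; composing one with a pseudo-inverse of the other gives an equivalence $\Enrichr{\cat{M}}\simeq\olActl{\cat{M}}$.

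The only step that demands care is the variance tracking in part~(1): one must verify that ``$\enrich{C}{-}$ has a left adjoint'' translates into a \emph{covariant} representability condition on $\Phi_X(C,-)$ while ``$-\ast C$ has a right adjoint'' translates into a \emph{contravariant} one on $\Phi_{(-)}(C,B)$, and that adjoining each to the essential image already computed in the relevant proposition produces the identical pair of representability conditions. Everything else is a direct appeal to the two preceding propositions plus the standard fact that a fully faithful, essentially surjective 2-functor is an equivalence.
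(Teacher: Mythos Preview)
Your proposal is correct and follows essentially the same route as the paper: the paper's proof also reduces part~(1) to the standard characterisation of adjoints via representability (illustrated with the enrichment case $\cat{M}(X,\enrich{A}{-})\cong\Phi_X(A,-)$), and declares part~(2) a direct consequence of~(1). Your write-up is simply a more explicit unpacking of the same argument, including the dual oplax-action verification that the paper leaves implicit.
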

\begin{proof}
The first clause is immediate from the definition of adjunction.
For instance, an enrichment $(\cat{C},\enrich{-}{-},j,M)$ over $\cat{M}$
is in $\Enrichr{\cat{M}}$ if and only if
for each $A\in\cat{C}$, $\enrich{A}{-}$ is a right adjoint,
which in turn is the case if and only if
for each $X\in\cat{M}$ and $A\in\cat{C}$, the functor 
\[
\cat{M}(X,\enrich{A}{-})\colon\cat{C}\longrightarrow\SET
\]
is representable.

The second clause is a direct consequence of the first.
\end{proof}

The reader might have noticed that 
there is another representability condition not covered by 
Propositions~\ref{prop:enrichment_as_metamodel} and 
\ref{prop:oplax_action_as_metamodel}, namely
metamodels $(\cat{C},\Phi,\overline{\phi}_\cdot,\overline{\phi})$
such that for each $X\in\cat{M}$ and $B\in\cat{C}$,
the functor 
\[
\Phi_X(-,B)\colon \cat{C}^\op\longrightarrow\SET
\] 
is representable. They correspond to 
{right lax actions of $\cat{M}^\op$ on $\cat{C}$},
or equivalently, to right oplax actions of $\cat{M}$ on $\cat{C}^\op$.

Extending the definition of enrichment (Definition~\ref{def:enrichment})
and the 2-category of enrichments (Definition~\ref{def:2-cat_of_enrichments})
to huge monoidal categories, we obtain the following.
\begin{proposition}
\label{prop:metatheory_as_enrichment}
Let $\cat{M}$ be a metatheory
and $\widehat{\cat{M}}=([\cat{M}^\op,\SET],\widehat{I},\widehat{\otimes})$
(see Definition~\ref{def:Day_conv}).
The 2-categories $\MtMod{\cat{M}}$ and $\Enrich{\widehat{\cat{M}}}$
are canonically isomorphic.
\end{proposition}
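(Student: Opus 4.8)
The plan is to recognise the claimed isomorphism as the ``uncurrying'' correspondence already sketched in the discussion of metamodels as generalised enrichments, and to verify that it is strictly invertible on objects, $1$-cells and $2$-cells. On objects: giving a functor $\enrich{-}{-}\colon\cat{C}^\op\times\cat{C}\longrightarrow\widehat{\cat{M}}=[\cat{M}^\op,\SET]$ is the same — by currying — as giving a functor $\Phi\colon\cat{M}^\op\times\cat{C}^\op\times\cat{C}\longrightarrow\SET$, via $\enrich{A}{B}=\Phi_{(-)}(A,B)$. By the Yoneda lemma a natural transformation $j_C\colon\DayI=\cat{M}(-,I)\longrightarrow\enrich{C}{C}$ corresponds to an element of $\enrich{C}{C}(I)=\Phi_I(C,C)$, i.e.\ to a function $(\overline{\phi}_\cdot)_C\colon 1\longrightarrow\Phi_I(C,C)$, with naturality in $C$ matching. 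By the universal property of the coend defining $\Dayo$ together with the co-Yoneda lemma, which eliminates the representable $\cat{M}(Z,Y\otimes X)$ in Definition~\ref{def:Day_conv}, a natural transformation $M_{A,B,C}\colon\enrich{B}{C}\Dayo\enrich{A}{B}\longrightarrow\enrich{A}{C}$ corresponds precisely to a family $(\overline{\phi}_{X,Y})_{A,B,C}\colon\Phi_Y(B,C)\times\Phi_X(A,B)\longrightarrow\Phi_{Y\otimes X}(A,C)$ natural in $X,Y\in\cat{M}$ and $A,B,C\in\cat{C}$; all three correspondences are bijective.

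Next I would check that, transported along these bijections, the unit and associativity axioms of an enrichment over $\widehat{\cat{M}}$ (Definition~\ref{def:enrichment}) become exactly the three axioms of Definition~\ref{def:metamodel}. The two unit triangles translate using the explicit left/right unitors of the Day convolution and the Yoneda description of $j$. For the associativity pentagon one must spell out the associator of $(\widehat{\cat{M}},\DayI,\Dayo)$ — itself built from a coend rearrangement and the associator of $\otimes$ on $\cat{M}$ — and verify that the resulting identity of natural transformations out of a triple Day product becomes, after applying the coend universal property termwise, the hexagonal associativity diagram of Definition~\ref{def:metamodel}. This bookkeeping is the laborious part and I expect it to be the main obstacle, though it requires no new idea; it can be shortened by instead observing that both axiom sets assert that the same data constitute an oplax monoidal functor $\cat{M}^\op\longrightarrow([\cat{C}^\op\times\cat{C},\SET],\cat{C}(-,-),\ptensorrev)$, which is the second description of metamodels given above.

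Finally I would treat morphisms. A $1$-cell $(G,g)$ in $\Enrich{\widehat{\cat{M}}}$ is a functor $G$ together with a natural transformation $g_{A,B}\colon\enrich{A}{B}\longrightarrow\enrich{GA}{GB}'$ in $\widehat{\cat{M}}$ compatible with $j$ and $M$; uncurrying $g$ yields $g_{X,A,B}\colon\Phi_X(A,B)\longrightarrow\Phi'_X(GA,GB)$, and, again by Yoneda for the unit condition and the coend universal property for the multiplication condition, the two $1$-cell axioms of Definition~\ref{def:2-cat_of_enrichments} become the two $1$-cell axioms of $\MtMod{\cat{M}}$. A $2$-cell in either setting is a natural transformation $\theta\colon G\Rightarrow G'$ subject to a single compatibility square, and these squares correspond directly under uncurrying. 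Since identities, the two composites of $1$-cells, and whiskering are all described by these same currying, Yoneda and coend bijections — which are natural and strict — the assignment is a $2$-functor that is bijective on objects, $1$-cells and $2$-cells, hence a strict isomorphism $\MtMod{\cat{M}}\cong\Enrich{\widehat{\cat{M}}}$. No size difficulty arises: by construction each hom-presheaf $\Phi_{(-)}(A,B)$ lies in $[\cat{M}^\op,\SET]$, so one needs only the evident reading of Definitions~\ref{def:enrichment} and \ref{def:2-cat_of_enrichments} for the huge monoidal category $\widehat{\cat{M}}$, as the statement already notes.
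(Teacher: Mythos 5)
Your proposal is correct and follows essentially the same route the paper takes (and sketches in the remarks after Definition~\ref{def:metamodel}): curry the hom-functor, use Yoneda for the unit and the coend universal property of the Day convolution for the composition, and observe that the enrichment axioms over $\widehat{\cat{M}}$ translate into the metamodel axioms, with the same bijections handling $1$-cells and $2$-cells strictly.
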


\subsection{$\Mod{-}{-}$ as a 2-functor}
Let $\cat{M}$ be a metatheory.
Similarly to the cases of enrichments and oplax actions,
we can view the $\Mod{-}{-}$ construction as a 2-functor
using the 2-category $\MtMod{\cat{M}}$.
In fact, via the inclusion
\begin{equation}
\label{eqn:theory_as_metamodel}
\Th{\cat{M}}\longrightarrow
\Enrich{\cat{M}}\longrightarrow\MtMod{\cat{M}},
\end{equation}
the 2-functor $\Mod{-}{-}$ is simply given by the following composite:
\[
\begin{tikzpicture}[baseline=-\the\dimexpr\fontdimen22\textfont2\relax ]
      \node (1) at (0,0)  {$\Th{\cat{M}}^\op\times \MtMod{\cat{M}}$};
      \node (2) at (0,-1.5)  {$\MtMod{\cat{M}}^\op\times \MtMod{\cat{M}}$};
      \node (3) at (0,-3) {$\tCAT$,};
      \draw[->] (1) to node[auto,labelsize]{inclusion} (2);
      \draw[->] (2) to node[auto,labelsize] {$\MtMod{\cat{M}}(-,-)$} (3);
\end{tikzpicture}
\]
where $\MtMod{\cat{M}}(-,-)$ is the hom-2-functor for 
the locally large $\MtMod{\cat{M}}$.
The inclusion (\ref{eqn:theory_as_metamodel}) identifies
a theory $\monoid{T}=(T,m,e)$ in $\cat{M}$ with the 
metamodel $(\Phi^{(\monoid{T})}, 
\overline{\phi^{(\monoid{T})}}_\cdot,\overline{\phi^{(\monoid{T})}})$
of $\cat{M}$ in
the terminal category $1$ (whose unique object we denote by $\ast$),
defined as follows:
\begin{itemize}
\item the functor $\Phi^{(\monoid{T})}\colon\cat{M}^\op\times 1^\op\times 1
\longrightarrow\SET$ maps $(X,\ast,\ast)$ to $\cat{M}(X,T)$;
\item the function $(\overline{\phi^{(\monoid{T})}}_\cdot)_\ast\colon 
1\longrightarrow \cat{M}(I,T)$ maps the unique element of $1$ to $e$;
\item for each $X,Y\in\cat{M}$, the function 
$(\overline{\phi^{(\monoid{T})}}_{X,Y})_{\ast,\ast,\ast}\colon
\cat{M}(Y,T)\times \cat{M}(X,T)\longrightarrow\cat{M}(Y\otimes X,T)$
maps $(g,f)$ to $m\circ (g\otimes f)$.
\end{itemize}

\section{Morphisms of metatheories}
\label{sec:morphism_metatheory}
In this section, we introduce a notion of morphism between metatheories.
The main purpose of morphisms of metatheories is to provide a 
means to compare different notions of algebraic theory.
An example of such comparison
is given in Section~\ref{sec:enrichment},
where we compare clones, symmetric operads and non-symmetric operads. 
Recall that the crucial observation used there was the fact that 
the $\Enrich{-}$ construction extends to a 2-functor
\begin{equation}
\label{eqn:Enrich_as_2-functor}
\Enrich{-}\colon \MonCAT\longrightarrow\twoCAT.
\end{equation}
Therefore, we want to define morphisms of metatheories with 
respect to which $\MtMod{-}$ behaves (2-)functorially.

On the other hand, recall from Section~\ref{sec:oplax_action}
that $\olAct{-}$ is a 2-functor of type
\begin{equation}
\label{eqn:olAct_as_2-functor}
\olAct{-}\colon(\MonCATol)^\coop\longrightarrow\twoCAT.
\end{equation}
Since metamodels unify both enrichments and oplax actions, 
we would like to explain both (\ref{eqn:Enrich_as_2-functor})
and (\ref{eqn:olAct_as_2-functor}) by introducing a
sufficiently general notion of morphism of metatheories.

The requirement to unify both $\MonCAT$ and $(\MonCATol^\coop)$
suggests the possibility of using a suitable variant of 
profunctors, namely \emph{monoidal profunctors} 
introduced in Definition~\ref{def:monoidal_profunctor}.

\begin{definition}
Let $\cat{M}=(\cat{M},I_\cat{M},\otimes_\cat{M})$ and 
$\cat{N}=(\cat{N},I_\cat{N},\otimes_\cat{N})$ be metatheories. 
A \defemph{morphism of metatheories from $\cat{M}$ to $\cat{N}$} 
is a monoidal profunctor from $\cat{M}$ to $\cat{N}$.
\end{definition}



\begin{definition}
We define the bicategory $\MtTH$ of metatheories as follows.
\begin{itemize}
\item An object is a metatheory.
\item A 1-cell from $\cat{M}$ to $\cat{N}$
is a morphism of metatheories $\cat{M}\pto\cat{N}$.
The identity 1-cell on a metatheory $\cat{M}$ is the hom-functor $\cat{M}(-,-)$,
equipped with the evident structure for a morphism of metatheories.
Given morphisms of metatheories 
$(H,h_\cdot,h)\colon \cat{M}\pto\cat{N}$ and 
$(K,k_\cdot,k)\colon \cat{N}\pto\cat{L}$,
their composite is $(K\ptensor H,k_\cdot\ptensor h_\cdot, k \ptensor h)
\colon \cat{M}\pto\cat{L}$ where $K\ptensor H$ is the composition of the 
profunctors $H$ and $K$ (Definition~\ref{def:profunctor}), and 
$k_\cdot \ptensor h_\cdot$ and $k\ptensor h$ are the evident
natural transformations.
\item A 2-cell from $H$ to $H'$, both from $\cat{M}$ to $\cat{N}$,
is a monoidal natural transformation
$\alpha\colon H\Longrightarrow 
H'\colon\cat{N}^\op\times\cat{M}\longrightarrow\SET$.  
\end{itemize}
\end{definition}

Similarly to the case of profunctors,
we have identity-on-objects fully faithful pseudofunctors 
\[
(-)_\ast\colon\MonCAT\longrightarrow\MtTH
\]
and 
\[
(-)^\ast\colon (\MonCATol)^\coop\longrightarrow\MtTH.
\]
In detail, a lax monoidal functor 
\[
F=(F,f_\cdot,f)\colon \cat{M}\longrightarrow\cat{N}
\]
gives rise to a morphism of metatheories 
\[
F_\ast=(F_\ast,(f_\ast)_\cdot,f_\ast)\colon\cat{M}\pto\cat{N}
\]
with $F_\ast(N,M)=\cat{N}(N,FM)$, $(f_\ast)_\cdot\colon 1\longrightarrow 
\cat{N}(I_\cat{N},FI_\cat{M})$ mapping the unique element of $1$ to 
$f_\cdot\colon I_\cat{N}\longrightarrow FI_\cat{M}$,
and $(f_\ast)_{N,N',M,M'}\colon 
\cat{N}(N',FM')\times\cat{N}(N,FM)\longrightarrow\cat{N}(N'\otimes_\cat{N} N,
F(M'\otimes_\cat{M} M))$
mapping $g'\colon N'\longrightarrow FM'$ and $g\colon N\longrightarrow FM$
to $f_{M,M'}\circ (g'\otimes g)\colon N'\otimes_\cat{N} N\longrightarrow
F(M'\otimes_\cat{M} M)$.
Given an oplax monoidal functor 
\[
F=(F,f_\cdot,f)\colon \cat{M}\longrightarrow\cat{N},
\]
we obtain a morphism of metatheories
\[
F^\ast=(F^\ast, (f^\ast)_\cdot,f^\ast)\colon\cat{N}\pto\cat{M}
\]
analogously.

In particular, a \emph{strong} monoidal functor
\[
F\colon\cat{M}\longrightarrow\cat{N}
\]
gives rise to both $F_\ast\colon \cat{M}\pto\cat{N}$
and $F^\ast\colon\cat{N}\pto\cat{M}$, and 
it is straightforward to see that these form an adjunction $F_\ast\dashv 
F^\ast$ in $\MtTH$.
\medskip

A morphism of metatheories $H\colon\cat{M}\pto\cat{N}$
induces a 2-functor 
\[
\MtMod{H}\colon\MtMod{\cat{M}}\longrightarrow\MtMod{\cat{N}}.
\]
Its action on objects is as follows.
\begin{definition}
\label{def:action_of_morphism_on_metamodel}
Let $\cat{M}=(\cat{M},I_\cat{M},\otimes_\cat{M})$ and 
$\cat{N}=(\cat{N},I_\cat{N},\otimes_\cat{N})$ be metatheories,
$H=(H,h_\cdot,h)\colon\cat{M}\pto\cat{N}$ be a morphism of metatheories, 
$\cat{C}$ be a large category and 
$\Phi=(\Phi,\overline{\phi}_\cdot,\overline{\phi})$ be a metamodel of  
$\cat{M}$ in $\cat{C}$.
We define the metamodel 
$H(\Phi)=(\Phi',\overline{\phi'}_\cdot,\overline{\phi'})$ 
of $\cat{N}$ on $\cat{C}$ as follows:
\begin{itemize}
\item The functor 
$\Phi'\colon\cat{N}^\op\times\cat{C}^\op\times\cat{C}\longrightarrow\SET$
maps $(N,A,B)\in\cat{N}^\op\times\cat{C}^\op\times\cat{C}$ to the set 
\begin{equation}
\label{eqn:action_of_H_on_metamodels}
\Phi'_N(A,B)=
\int^{M\in\cat{M}}H(N,M)\times \Phi_M(A,B).
\end{equation}
\item The natural transformation $((\overline{\phi'}_\cdot)_C\colon 
1\longrightarrow \Phi'_{I_\cat{N}}(C,C))_{C\in\cat{C}}$
is defined by
mapping the unique element $\ast$ of $1$ to
\begin{multline*}
[I_\cat{M}\in\cat{M}, h_\cdot(\ast)\in H(I_\cat{N},I_\cat{M}), 
(\overline{\phi}_\cdot)_C(\ast)\in
\Phi_{I_\cat{M}}(C,C)]\\
\in\int^{M\in\cat{M}}H(I_\cat{N},M)\times \Phi_M(C,C)
\end{multline*}
for each $C\in\cat{C}$. 
\item The natural transformation 
\[((\overline{\phi'}_{N,N'})_{A,B,C}\colon
\Phi'_{N'}(B,C)\times\Phi'_N(A,B)\longrightarrow
\Phi'_{N'\otimes_\cat{N} N}(A,C))_{N,N'\in\cat{N},A,B,C\in\cat{C}}\]
is defined by mapping a pair consisting of
$[M',x',y']\in\Phi'_{N'}(B,C)$ and $[M,x,y]\in\Phi'_N(A,B)$
to 
\begin{equation*}
[M'\otimes_\cat{M}M, h_{N,N',M,M'}(x',x), 
(\overline{\phi}_{M,M'})_{A,B,C}(y',y)]
\end{equation*}
for each $N,N'\in\cat{N}$ and $A,B,C\in\cat{C}$.
\end{itemize}
\end{definition}

The above construction extends routinely, giving rise to a pseudofunctor
\[
\MtMod{-}\colon \MtTH\longrightarrow\twoCAT.
\]

\subsection{Comparing different notions of algebraic theory}
\label{subsec:comparing}
We now demonstrate how we can compare 
different notions of algebraic theory via morphisms of metatheories.

We start with a few remarks on simplification of the action
(Definition~\ref{def:action_of_morphism_on_metamodel}) of a morphism of 
metatheories
on metamodels, in certain special cases.
Let $\cat{M}$ and $\cat{N}$ be metatheories, 
$
H\colon\cat{M}\pto\cat{N}
$
be a morphism of metatheories,
$\cat{C}$ be a large category and 
$\Phi=(\Phi,\overline{\phi}_\cdot,\overline{\phi})$ be a metamodel of $\cat{M}$
in $\cat{C}$. 

First consider the case where for each $A,B\in\cat{C}$,
the functor $\Phi_{(-)}(A,B)\colon \cat{M}^\op\longrightarrow{\SET}$
is representable. This means that $\Phi$ is in fact (up to an isomorphism) an 
enrichment $\enrich{-}{-}$; see Proposition~\ref{prop:enrichment_as_metamodel}.
In this case, $\Phi_M(A,B)$ may be written as $\cat{M}(M,\enrich{A}{B})$
and hence the formula (\ref{eqn:action_of_H_on_metamodels})
simplifies:
\[
\Phi'_N(A,B)=
\int^{M\in\cat{M}}H(N,M)\times \cat{M}(M,\enrich{A}{B})
\cong H(N,\enrich{A}{B}).
\]
In particular, if moreover $H$ is of the form
\[
F_\ast\colon \cat{M}\pto\cat{N}
\]
for some lax monoidal functor $F\colon\cat{M}\longrightarrow\cat{N}$,
then we have 
\[
\Phi'_N(A,B)\cong \cat{N}(N,F\enrich{A}{B}),
\]
implying that $H(\Phi)=F_\ast(\Phi)$ is again isomorphic to
an enrichment; indeed, this case reduces to $F_\ast(\enrich{-}{-})$
defined in Definition~\ref{def:action_of_lax_on_enrichment}.
Note that, as a special case, for any theory $\monoid{T}$ in $\cat{M}$
(recall that such a theory is identified with a metamodel of $\cat{M}$
in the terminal category $1$),
$F_\ast(\monoid{T})$ is again isomorphic to a theory in $\cat{N}$.
Therefore the 2-functor 
\[
\MtMod{F_\ast}\colon\MtMod{\cat{M}}\longrightarrow\MtMod{\cat{N}}
\]
extends the functor 
\[
\Th{F}\colon\Th{\cat{M}}\longrightarrow\Th{\cat{N}}
\]
between the categories of theories induced by $F$, using the well-known fact 
that a lax monoidal functor preserves theories (= monoid objects).

Next consider the case where $H$ is of the form
\[
G^\ast\colon \cat{M}\pto\cat{N}
\]
for some oplax monoidal functor $G\colon\cat{N}\longrightarrow\cat{M}$.
In this case $H(N,M)=\cat{M}(GN,M)$ and the formula 
(\ref{eqn:action_of_H_on_metamodels}) simplifies as follows:
\[
\Phi'_N(A,B)=
\int^{M\in\cat{M}}\cat{M}(GN,M)\times \Phi_M(A,B)
\cong \Phi_{GN}(A,B).
\]

Suppose now that we have a strong monoidal 
functor
\[
F\colon\cat{M}\longrightarrow\cat{N}
\]
between metatheories $\cat{M}$ and $\cat{N}$.
On the one hand, $F$ induces a functor
\[
\Th{F}\colon\Th{\cat{M}}\longrightarrow\Th{\cat{N}}
\]
between the categories of theories,
which is a restriction of the 2-functor $\MtMod{F_\ast}$.
On the other hand, $F$ induces a 2-functor
\[
\MtMod{F^\ast}\colon \MtMod{\cat{N}}\longrightarrow\MtMod{\cat{M}}
\]
between the 2-categories of metamodels.
The 2-adjointness $\MtMod{F_\ast}\dashv \MtMod{F^\ast}$
yields, for each theory $\monoid{T}$ in $\cat{M}$
and each metamodel $(\cat{C},\Phi)$ of $\cat{N}$,
an isomorphism of categories
\[
\Mod{F_\ast(\monoid{T})}{(\cat{C},\Phi)}\cong
\Mod{\monoid{T}}{(\cat{C},F^\ast(\Phi))}.
\]
Observe that $F_\ast(\monoid{T})=\Th{F}(\monoid{T})$ is the standard action of 
a strong monoidal 
functor on a theory, and $F^\ast(\Phi)$ is, in essence,
simply precomposition by $F$.

\medskip
Now we apply the above argument to some concrete cases.
\begin{example}
Recall from Section~\ref{sec:enrichment}, where we have compared clones, 
symmetric operads and non-symmetric operads, that there is a chain of lax 
monoidal functors
\[
\begin{tikzpicture}[baseline=-\the\dimexpr\fontdimen22\textfont2\relax ]
      \node (L) at (0,0)  {$[\Ncat,\Set]$};
      \node (M) at (3.5,0)  {$[\Pcat,\Set]$};
      \node (R) at (7,0)  {$[\F,\Set]$.};
      \draw[->]  (L) to node[auto,labelsize] 
      {$\Lan_J$} (M);
      \draw[->]  (M) to node[auto,labelsize] 
      {$\Lan_{J'}$} (R);
\end{tikzpicture} 
\] 
These functors, being left adjoints in $\MonCAT$, are in fact strong monoidal 
\cite{Kelly:doctrinal}.
Theories are mapped as follows, as noted in Section~\ref{sec:enrichment}:
\[
\begin{tikzpicture}[baseline=-\the\dimexpr\fontdimen22\textfont2\relax ]
      \node (L) at (0,0)  {$\Th{[\Ncat,\Set]}$};
      \node (M) at (4.5,0)  {$\Th{[\Pcat,\Set]}$};
      \node (R) at (9,0)  {$\Th{[\F,\Set]}.$};
      \draw[->]  (L) to node[auto,labelsize] {$\Th{\Lan_J}$} (M);
      \draw[->]  (M) to node[auto,labelsize] {$\Th{\Lan_{J'}}$} (R);
\end{tikzpicture} 
\]
In this case, the suitable 2-functors between 2-categories of metamodels
can be given either as $\MtMod{(\Lan_J)^\ast}$ or $\MtMod{([J,\Set])_\ast}$ 
(and similarly for $J'$),
because $(\Lan_J)^\ast\cong ([J,\Set])_\ast$ in $\MtTH$.
\end{example}
\begin{example}
Let us consider the relationship between clones and monads on $\Set$.
The inclusion functor
\[
J''\colon \F\longrightarrow\Set
\]
induces a functor 
\[
\Lan_{J''}\colon[\F,\Set]\longrightarrow[\Set,\Set],
\]
which naturally acquires the structure of a strong monoidal functor.
The essential image of this functor is precisely the \emph{finitary}
endofunctors on $\Set$, i.e., those endofunctors preserving filtered colimits.
The functor $\Th{\Lan_{J''}}$ maps a clone to a finitary monad on $\Set$,
in accordance with 
the well-known correspondence between clones (= Lawvere theories) and finitary 
monads on $\Set$ \cite{Linton_equational}.
Between the 2-categories of metamodels, we have a 2-functor
\[
\MtMod{(\Lan_{J''})^\ast}\colon\MtMod{[\Set,\Set]}\longrightarrow
\MtMod{[\F,\Set]}.
\]
The standard metamodel of $[\Set,\Set]$ in $\Set$ (corresponding to the 
definition of Eilenberg--Moore algebras) is given by the strict action
described in Example~\ref{ex:monad_standard_action};
in particular, its functor part $\Phi\colon 
[\Set,\Set]^\op\times\Set^\op\times\Set\longrightarrow\SET$ maps $(F,A,B)$ to 
$\Set(FA,B)$. 
The metamodel $(\Lan_{J''})^\ast(\Phi)$ of $[\F,\Set]$ in $\Set$ has the 
functor part $(\Lan_{J''})^\ast(\Phi)\colon[\F,\Set]^\op\times\Set^\op\times
\Set\longrightarrow\SET$ mapping $(X,A,B)$ to 
\begin{align*}
\Set((\Lan_{J''}X)A,B)
&= \Set\left(\int^{[n]\in\F} A^n\times X_n,B\right)\\
&\cong \int_{[n]\in\F} \Set (X_n, \Set(A^n,B))\\
&\cong [\F,\Set] (X, \enrich{A}{B}),
\end{align*}
where $\enrich{A}{B}\in[\F,\Set]$ in the final line is the one in 
Example~\ref{ex:clone_enrichment}.
Hence $\MtMod{(\Lan_{J''})^\ast}$ preserves the standard metamodels 
and this way we recover the well-known observation 
that the classical correspondence 
of clones and finitary monads on $\Set$ preserves semantics.

Note that by combining the previous example we obtain the chain
\[
\begin{tikzpicture}[baseline=-\the\dimexpr\fontdimen22\textfont2\relax ]
      \node (L) at (0,0)  {$[\Ncat,\Set]$};
      \node (M) at (2.5,0)  {$[\Pcat,\Set]$};
      \node (R) at (5,0)  {$[\F,\Set]$};
      \node (RR) at (7.5,0) {$[\Set,\Set]$};
      \draw[->]  (L) to node[auto,labelsize] 
      {$\Lan_J$} (M);
      \draw[->]  (M) to node[auto,labelsize] 
      {$\Lan_{J'}$} (R);
      \draw[->]  (R) to node[auto,labelsize] 
      {$\Lan_{J''}$} (RR);
\end{tikzpicture} 
\] 
of strong monoidal functors, connecting non-symmetric and symmetric operads 
with monads on $\Set$.
\end{example}

\begin{example}
Let $\cat{M}$ be a metatheory, $\cat{C}$ a large category,
and $\ast$ a \emph{pseudo} action of $\cat{M}$ on $\cat{C}$.
We obtain a strong monoidal functor 
\[
F\colon\cat{M}\longrightarrow[\cat{C},\cat{C}]
\]
(where $[\cat{C},\cat{C}]$ is equipped with the composition monoidal structure)
as the transpose of $\ast\colon\cat{M}\times\cat{C}\longrightarrow\cat{C}$.
The functor $\Th{F}$ maps any theory $\monoid{T}=(T,e,m)$ in $\cat{M}$
to the monad $F(\monoid{T})=(T\ast(-), e\ast(-),m\ast(-))$ on $\cat{C}$.
The 2-functor 
$\MtMod{F^\ast}\colon\MtMod{[\cat{C},\cat{C}]}\longrightarrow\MtMod{\cat{M}}$
maps the standard metamodel $\Phi$ of $[\cat{C},\cat{C}]$ in $\cat{C}$
(Example~\ref{ex:monad_standard_action}) to the metamodel 
$F^\ast(\Phi)\colon 
\cat{M}^\op\times\cat{C}^\op\times\cat{C}\longrightarrow\SET$
mapping $(X,A,B)$ to
\[
\cat{C}((FX)A,B)=\cat{C}(X\ast A,B).
\]
Therefore it maps the standard metamodel $\Phi$ to the metamodel induced from 
$\ast$.

As a special case, for a large category $\cat{C}$ with finite limits and a 
cartesian monad $\monoid{S}$ on $\cat{C}$,
the standard metamodel for $\monoid{S}$-operads
(Example~\ref{ex:metamodel_S-operad}) may be related to 
the standard metamodel of monads on $\cat{C}$,
and models of an $\monoid{S}$-operad $\monoid{T}$ may alternatively be defined 
as Eilenberg--Moore algebras of the monad on $\cat{C}$ induced from $\monoid{T}$
(as noted in \cite{Leinster_book}).
\end{example}

\subsection{The universal theory}
We conclude this section with another application of
the aforementioned method to compare different notions of algebraic
theory via strong monoidal functors.

Let us fix a large category $\cat{C}$ for the rest of this section.
As we have already seen,
whenever we are given a metatheory $\cat{M}$,
a theory $\monoid{T}$ in $\cat{M}$ and a 
metamodel $\Phi$ of $\cat{M}$ in $\cat{C}$,
we can consider the category $\Mod{\monoid{T}}{(\cat{C},\Phi)}$ of models and 
the associated forgetful functor to $\cat{C}$.

We now know that
there are in general multiple choices of the input data 
$(\cat{M},\monoid{T},\Phi)$ generating a given category of models 
and the associated forgetful functor.
That is, given metatheories $\cat{M}$ and $\cat{N}$, a strong monoidal functor 
$F\colon\cat{M}\longrightarrow\cat{N}$, a theory $\monoid{T}$ in ${\cat{M}}$
and a metamodel $\Phi$ of $\cat{N}$ in $\cat{C}$, 
the triples $(\cat{M},\monoid{T},F^\ast (\Phi))$ and 
$(\cat{N},F_\ast(\monoid{T}),\Phi)$ define canonically isomorphic categories of 
models over $\cat{C}$.
Let us combine this observation with the well-known concept of universal monoid 
object~\cite[Section~VII.5]{MacLane_CWM}.

We define the \defemph{augumented simplex category} (also known as the 
\defemph{algebraists' simplex category}) $\Simp$ 
\cite[Section~VII.5]{MacLane_CWM} to be the category of all finite ordinals 
(including the empty ordinal $\ord{0}$) and monotone functions.
The ordinal with $n$ elements $\{0 < 1 <\dots < n-1\}$
is denoted by $\ord{n}$.
We endow $\Simp$ with the structure of a strict monoidal category
via ordinal sum as monoidal product.
The (terminal) object $\ord{1}\in\Simp$, together with the 
unique morphisms $!_\ord{0}\colon\ord{0}\longrightarrow\ord{1}$ and 
$!_\ord{2}\colon\ord{2}\longrightarrow\ord{1}$
define a monoid object, i.e., a theory, in $\Simp$.
This theory $\monoid{U}=(\ord{1},!_\ord{0},!_\ord{2})$ 
is in fact the \emph{universal} theory,
in the sense that for any metatheory $\cat{M}$ 
and a theory $\monoid{T}$ therein,
there exists a strong monoidal functor $F\colon \Simp\longrightarrow\cat{M}$,
uniquely up to monoidal natural isomorphisms,
which carries $\monoid{U}$ to $\monoid{T}$.
(The monoidal category $\Simp$ being isomorphic to the PRO of monoids,
this last assertion is just the reformulation of its models in the style of 
functorial 
semantics.)

Consequently,
given any triple $(\cat{M},\monoid{T},\Phi)$ defining a 
category of models over $\cat{C}$,
we get the strong monoidal functor $F\colon \Simp\longrightarrow\cat{M}$
corresponding to $\monoid{T}$ (so that $F_\ast(\monoid{U})\cong\monoid{T}$),
and hence obtain the triple $(\Simp,\monoid{U},F^\ast(\Phi))$ which
defines the category of models over $\cat{C}$  isomorphic to the original one.
So, in fact, whichever triple $(\cat{M},\monoid{T},\Phi)$ we choose, 
the category of models over $\cat{C}$ defined by it can be 
realised (up to an isomorphism) as the category of models of the universal 
theory $\monoid{U}$,
with a suitable choice of a metamodel of $\Simp$ in $\cat{C}$
(or equivalently, in light of Proposition~\ref{prop:metatheory_as_enrichment},
with a suitable choice of an enrichment of $\cat{C}$ over $[\Simp^\op,\SET]$).
Let us define the category $\MtModfib{\cat{C}}{\Simp}$ of 
metamodels of $\Simp$ in $\cat{C}$
as the fibre of $\MtMod{\Simp}$ (thought of as a 2-category over $\tCAT$ via 
the evident forgetful 2-functor) above $\cat{C}\in\tCAT$, namely as the pullback
\begin{equation*}
\begin{tikzpicture}[baseline=-\the\dimexpr\fontdimen22\textfont2\relax ]
      \node (TL) at (0,2)  {$\MtModfib{\cat{C}}{\Simp}$};
      \node (TR) at (4,2)  {$\MtMod{\Simp}$};
      \node (BL) at (0,0) {$1$};
      \node (BR) at (4,0) {$\tCAT$};
      \draw[->] (TL) to node[auto,labelsize](T) {} 
      (TR);
      \draw[->]  (TR) to node[auto,labelsize] {$\text{forgetful}$} (BR);
      \draw[->]  (TL) to node[auto,swap,labelsize] {} 
      (BL);
      \draw[->]  (BL) to node[auto,swap,labelsize] {$\name{\cat{C}}$} (BR);
     \draw (0.2,1.4) -- (0.6,1.4) -- (0.6,1.8);
\end{tikzpicture}  
\end{equation*}
in $\twoCAT$ (then this pullback is locally discrete).
We have a functor 
\begin{equation}
\label{eqn:cat_of_models_over_C}
\Mod{\monoid{U}}{-}\colon\MtModfib{\cat{C}}{\Simp}\longrightarrow\CAT/\cat{C}
\end{equation}
mapping a metamodel $\Phi$ of $\Simp$ over $\cat{C}$ to the category of 
models $\Mod{\monoid{U}}{(\cat{C},\Phi)}$ equipped with the forgetful functor
to $\cat{C}$.

The open problem of finding an intrinsic characterisation of 
the class of forgetful functors 
arising in our framework, mentioned in the introduction,
may be phrased as that of 
finding intrinsic properties of objects in $\CAT/\cat{C}$
which identify the essential image of 
the functor (\ref{eqn:cat_of_models_over_C}).
From the definition of category of models 
(Definition~\ref{def:metamodel_model}),
it is immediate that 
such forgetful functors are faithful and 
amnestic~\cite[Definition~3.27]{AHS:joy-of-cats}
isofibrations,\footnote{A functor $F\colon\cat{A}\longrightarrow\cat{B}$
is an \defemph{isofibration} iff for any $A\in\cat{A}$ and an isomorphism
$g\colon FA\longrightarrow B'$ in $\cat{B}$,
there exists an isomorphism $f\colon A\longrightarrow A'$ in $\cat{A}$
such that $Ff=g$.}
for example.
We note that the functor (\ref{eqn:cat_of_models_over_C})
admits a left adjoint, which maps $(V\colon\cat{X}\longrightarrow\cat{C})
\in\CAT/\cat{C}$
to the metamodel $\Phi\colon \Simp^\op\times\cat{C}^\op\times\cat{C}
\longrightarrow\SET$
defined as 
\[
\Phi_{\ord{n}}(A,B)=\int^{X_1,\dots,X_n\in\cat{X}}
\cat{C}(A,VX_1)\times\cat{C}(VX_1,VX_2)\times\dots\times
\cat{C}(VX_n,B).
\]

\section{Structure-semantics adjunctions}
\label{sec:str-sem}

\emph{Structure-semantics adjunctions} are a classical topic in categorical 
algebra.
They are a family of 
adjunctions parametrised by a metatheory $\cat{M}$
and its metamodel $(\cat{C},\Phi,\overline{\phi}_\cdot,\overline{\phi})$;
if we fix these parameters, the structure-semantics adjunction is 
\emph{ideally} of type
\begin{equation}
\label{eqn:str_sem_idealised}
\begin{tikzpicture}[baseline=-\the\dimexpr\fontdimen22\textfont2\relax ]
      \node(11) at (0,0) 
      {$\Th{\cat{M}}^\op$};
      \node(22) at (4,0) {$\CAT/\cat{C}$,};
  
      \draw [->,transform canvas={yshift=5pt}]  (22) to node 
      [auto,swap,labelsize]{$\Str$} (11);
      \draw [->,transform canvas={yshift=-5pt}]  (11) to node 
      [auto,swap,labelsize]{$\Sem$} (22);
      \path (11) to node[midway](m){} (22); 

      \node at (m) [labelsize,rotate=90] {$\vdash$};
\end{tikzpicture}
\end{equation}
and the functor $\Sem$ is essentially $\Mod{-}{(\cat{C},\Phi)}$.
The idea is that we may regard an object of $\CAT/\cat{C}$, say 
$V\colon\cat{A}\longrightarrow\cat{C}$,
as specifying an additional structure (of a very general kind) 
on objects in $\cat{C}$, by viewing $\cat{A}$ as the category of 
$\cat{C}$-objects with the additional structure and $V$ as the associated 
forgetful functor.
The functor $\Str$ then extracts a theory from $V$,  
giving the ``best approximation'' of this additional structure by 
structures expressible by theories in $\cat{M}$.
Various authors have constructed such adjunctions for a variety of 
notions of algebraic theory, most notably for 
clones~\cite{Lawvere_thesis,Linton_equational,
Isbell_general_functorial_sem} and monads~\cite{Dubuc_Kan,Street_FTM}.
There are also some attempts to unify these 
results~\cite{Linton_outline,Avery_thesis}.

If we try to work this idea out, however, there turn out to be size-issues or 
other problems, and usually we cannot obtain an adjunction of type 
(\ref{eqn:str_sem_idealised});
we cannot find a suitable functor $\Str$ of that type.
To get an adjunction, various conditions on objects in $\CAT/\cat{C}$
were introduced in the literature in order to single out well-behaved
(usually called \emph{tractable}) objects, 
yielding a restricted version of (\ref{eqn:str_sem_idealised}):
\begin{equation}
\label{eqn:str_sem_tractable}
\begin{tikzpicture}[baseline=-\the\dimexpr\fontdimen22\textfont2\relax ]
      \node(11) at (0,0) 
      {$\Th{\cat{M}}^\op$};
      \node(22) at (4,0) {$(\CAT/\cat{C})_{\mathrm{tr}}$.};
  
      \draw [->,transform canvas={yshift=5pt}]  (22) to node 
      [auto,swap,labelsize]{$\Str$} (11);
      \draw [->,transform canvas={yshift=-5pt}]  (11) to node 
      [auto,swap,labelsize]{$\Sem$} (22);
      \path (11) to node[midway](m){} (22); 

      \node at (m) [labelsize,rotate=90] {$\vdash$};
\end{tikzpicture}
\end{equation}
Here, $(\CAT/\cat{C})_\mathrm{tr}$ is the full-subcategory of 
$\CAT/\cat{C}$ consisting of all {tractable} objects.

In this section, we construct a structure-semantics adjunction for 
an arbitrary metatheory and an arbitrary metamodel of it.
Of course, we cannot obtain an adjunction of type (\ref{eqn:str_sem_idealised}),
for the same reasons that have prevented  
other authors from doing so.
However, we shall obtain a modified adjunction by a strategy different from 
theirs (and similar to \cite{Linton_outline,Avery_thesis}):
instead of restricting $\CAT/\cat{C}$,
we {extend} $\Th{\cat{M}}$ to $\Th{\widehat{\cat{M}}}$\footnote{The monoidal 
category $\widehat{\cat{M}}$ is not a metatheory because it is not large.
Extending Definition~\ref{def:theory}, by $\Th{\widehat{\cat{M}}}$ we mean
the category of monoids in $\widehat{\cat{M}}$.}
(where $\widehat{\cat{M}}=[\cat{M}^\op,\SET]$ is equipped with
the convolution monoidal structure),
and obtain an extended version of (\ref{eqn:str_sem_idealised}):
\begin{equation}
\label{eqn:str_sem_extend}
\begin{tikzpicture}[baseline=-\the\dimexpr\fontdimen22\textfont2\relax ]
      \node(11) at (0,0) 
      {$\Th{\widehat{\cat{M}}}^\op$};
      \node(22) at (4,0) {$\CAT/\cat{C}$.};
  
      \draw [->,transform canvas={yshift=5pt}]  (22) to node 
      [auto,swap,labelsize]{$\Str$} (11);
      \draw [->,transform canvas={yshift=-5pt}]  (11) to node 
      [auto,swap,labelsize]{$\Sem$} (22);
      \path (11) to node[midway](m){} (22); 

      \node at (m) [labelsize,rotate=90] {$\vdash$};
\end{tikzpicture}
\end{equation}
We may then obtain known adjunctions of the form (\ref{eqn:str_sem_tractable}) 
for clones and monads,
by suitably restricting (\ref{eqn:str_sem_extend}).

\subsection{The structure and semantics functors}
Let $\cat{M}=(\cat{M},I,\otimes)$ be a metatheory, $\cat{C}$ be a large 
category, and $\Phi=(\Phi,\overline{\phi}_\cdot,\overline{\phi})$ be a 
metamodel of $\cat{M}$ in $\cat{C}$.
The metamodel $\Phi$ enables us to define, for each theory $\monoid{T}
\in\Th{\cat{M}}$, the category of models $\Mod{\monoid{T}}{(\cat{C},\Phi)}$
together with the forgetful functor $U\colon \Mod{\monoid{T}}{(\cat{C},\Phi)}
\longrightarrow\cat{C}$.
This construction is functorial, and gives rise to a functor
\[
\Th{\cat{M}}^\op\longrightarrow\CAT/\cat{C}.
\]
However, as we have remarked in Proposition~\ref{prop:metatheory_as_enrichment},
a metamodel of $\cat{M}$ in $\cat{C}$ corresponds to 
an enrichment of $\cat{C}$ over $\widehat{\cat{M}}$;
hence using $\Phi$ we can actually give 
the definition of models for any theory (i.e., monoid object) in 
$\widehat{\cat{M}}$.
Therefore the previous functor can be extended to 
\begin{equation}
\label{eqn:semantics_functor}
\Sem\colon \Th{\widehat{\cat{M}}}^\op\longrightarrow\CAT/\cat{C}.
\end{equation}
The category $\Th{\widehat{\cat{M}}}$ is 
isomorphic to the category of lax monoidal functors of type 
$\cat{M}^\op\longrightarrow\SET$ and monoidal natural transformations between 
them.
Indeed, an object $(P,e,m)$ of $\Th{\widehat{\cat{M}}}$
consists of:
\begin{itemize}
\item a functor $P\colon\cat{M}^\op\longrightarrow\SET$;
\item a natural transformation $(e_X\colon \DayI (X)\longrightarrow 
P(X))_{X\in\cat{M}}$;
\item a natural transformation $(m_X\colon (P\Dayo P)(X)\longrightarrow 
P(X))_{X\in\cat{M}}$
\end{itemize}
satisfying the monoid axioms, and such a data is equivalent to 
\begin{itemize}
\item a functor $P\colon\cat{M}^\op\longrightarrow\SET$;
\item a function $\overline{e}\colon 1\longrightarrow P(I)$;
\item a natural transformation $(\overline{m}_{X,Y}\colon P(Y)\times P(X)
\longrightarrow P(Y\otimes X))_{X,Y\in\cat{M}}$
\end{itemize}
satisfying the axioms for $(P,\overline{e},\overline{m})$ to be a
lax monoidal functor $\cat{M}^\op\longrightarrow\SET$.
We shall use these two descriptions of objects in
$\Th{\widehat{\cat{M}}}$ 
interchangeably.

Let us describe the action of the functor $\Sem$ concretely. 
For any $\monoid{P}=(P,e,m)\in\Th{\widehat{\cat{M}}}$, we define 
the category $\Mod{\monoid{P}}{(\cat{C},\Phi)}$ as follows:
\begin{itemize}
\item An object is a pair consisting of an object $C\in\cat{C}$ and 
a natural transformation 
\[
(\xi_X\colon P(X)\longrightarrow \Phi_X(C,C))_{X\in\cat{M}}
\]
making the following diagrams commute for each $X,Y\in\cat{M}$:
\begin{equation}
\label{eqn:model_of_P}
\begin{tikzpicture}[baseline=-\the\dimexpr\fontdimen22\textfont2\relax ]
      \node (TL) at (0,1)  {$1$};
      \node (TR) at (2,1)  {$P(I)$};
      \node (BR) at (2,-1) {$\Phi_I(C,C)$};
      \draw[->] (TL) to node[auto,labelsize](T) {$\overline{e}$} (TR);
      \draw[->]  (TR) to node[auto,labelsize] {$\xi_I$} (BR);
      \draw[->]  (TL) to node[auto,swap,labelsize] 
      {$(\overline{\phi}_\cdot)_{C}$} 
      (BR);
\end{tikzpicture} 
\quad
\begin{tikzpicture}[baseline=-\the\dimexpr\fontdimen22\textfont2\relax ]
      \node (TL) at (0,1)  {$P(Y)\times P(X)$};
      \node (TR) at (4.5,1)  {$P({Y\otimes X})$};
      \node (BL) at (0,-1) {$\Phi_Y(C,C)\times \Phi_X(C,C)$};
      \node (BR) at (4.5,-1) {$\Phi_{Y\otimes X}(C,C).$};
      \draw[->] (TL) to node[auto,labelsize](T) 
      {$\overline{m}_{X,Y}$} 
      (TR);
      \draw[->]  (TR) to node[auto,labelsize] {$\xi_{Y\otimes X}$} (BR);
      \draw[->]  (TL) to node[auto,swap,labelsize] {$\xi_Y\times \xi_X$} 
      (BL);
      \draw[->]  (BL) to node[auto,labelsize] 
      {$(\overline{\phi}_{X,Y})_{C,C,C}$} (BR);
\end{tikzpicture} 
\end{equation}
\item A morphism from $(C,\xi)$ to $(C',\xi')$ is a morphism $f\colon 
C\longrightarrow C'$ in $\cat{C}$ making the following diagram commute for 
each $X\in\cat{M}$:
\begin{equation*}
\begin{tikzpicture}[baseline=-\the\dimexpr\fontdimen22\textfont2\relax ]
      \node (TL) at (0,1)  {$P(X)$};
      \node (TR) at (4,1)  {$\Phi_X(C,C)$};
      \node (BL) at (0,-1) {$\Phi_X(C',C')$};
      \node (BR) at (4,-1) {$\Phi_{X}(C,C').$};
      \draw[->] (TL) to node[auto,labelsize](T) {$\xi_X$} (TR);
      \draw[->]  (TR) to node[auto,labelsize] {$\Phi_X(C,f)$} (BR);
      \draw[->]  (TL) to node[auto,swap,labelsize] {$\xi'_X$} (BL);
      \draw[->]  (BL) to node[auto,labelsize] {$\Phi_X(f,C')$} (BR);
\end{tikzpicture} 
\end{equation*}
\end{itemize}
There exists an evident forgetful functor 
$U\colon\Mod{\monoid{P}}{(\cat{C},\Phi)}\longrightarrow\cat{C}$ mapping 
$(C,\xi)$ to $C$ and $f$ to $f$;
the functor $\Sem$ maps $\monoid{P}$ to $U$.

We have a canonical fully faithful functor
\begin{equation*}
J\colon\Th{\cat{M}}\longrightarrow\Th{\widehat{\cat{M}}}
\end{equation*}
mapping $(T,e,m)\in\Th{\cat{M}}$ to the functor $\cat{M}(-,T)$
with the evident monoid structure induced from $e$ and $m$.
An object $(P,e,m)\in\Th{\widehat{\cat{M}}}$ is in the essential 
image of $J$
if and only if $P\colon\cat{M}^\op\longrightarrow\SET$ is representable.

Let us describe the left adjoint $\Str$ to (\ref{eqn:semantics_functor}).
Given an object $V\colon\cat{A}\longrightarrow\cat{C}$ of $\CAT/\cat{C}$, 
we define $\Str(V)=(P^{(V)},e^{(V)},m^{(V)})\in\Th{\widehat{\cat{M}}}$ as 
follows:
\begin{itemize}
\item The functor $P^{(V)}\colon \cat{M}^\op\longrightarrow\SET$
maps $X\in\cat{M}$ to 
\begin{equation}
\label{eqn:structure_of_V}
P^{(V)}(X)=\int_{A\in\cat{A}}\Phi_X(VA,VA).
\end{equation}
\item The function $\overline{e^{(V)}}\colon 1\longrightarrow P^{(V)}(I)$
maps the unique element of $1$ to 
$((\overline{\phi}_\cdot)_{VA} (\ast))_{A\in\cat{A}}$ $\in P^{(V)}(I)$.
\item The $(X,Y)$-th component of the natural transformation 
\[(\overline{m^{(V)}}_{X,Y}\colon P^{(V)}(Y)\times P^{(V)}(X)\longrightarrow 
P^{(V)}(Y\otimes X))_{X,Y\in\cat{M}}\]
maps $((y_A)_{A\in\cat{A}},(x_A)_{A\in\cat{A}})$
to $((\overline{\phi}_{X,Y})_{VA,VA,VA}(y_A,x_A))_{A\in\cat{A}}$.
\end{itemize}
The monoid axioms for $(P^{(V)},{e^{(V)}},{m^{(V)}})$ 
follow easily from the axioms for metamodels,
and $\Str$ routinely extends to a functor of type 
$\CAT/\cat{C}\longrightarrow\Th{\widehat{\cat{M}}}^\op$. 

\begin{theorem}
\label{thm:str_sem_small}
Let $\cat{M}$ be a metatheory, $\cat{C}$ be a large category and 
$\Phi=(\Phi,\overline{\phi}_\cdot,\overline{\phi})$ 
be a metamodel of $\cat{M}$ in $\cat{C}$.
The functors $\Sem$ and $\Str$ defined above
form an adjunction:
\[
\begin{tikzpicture}[baseline=-\the\dimexpr\fontdimen22\textfont2\relax ]
      \node(11) at (0,0) 
      {$\Th{\widehat{\cat{M}}}^\op$};
      \node(22) at (4,0) {$\CAT/\cat{C}$.};
  
      \draw [->,transform canvas={yshift=5pt}]  (22) to node 
      [auto,swap,labelsize]{$\Str$} (11);
      \draw [->,transform canvas={yshift=-5pt}]  (11) to node 
      [auto,swap,labelsize]{$\Sem$} (22);
      \path (11) to node[midway](m){} (22); 

      \node at (m) [labelsize,rotate=90] {$\vdash$};
\end{tikzpicture}
\] 
\end{theorem}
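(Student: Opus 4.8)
The plan is to prove the theorem by exhibiting a bijection
\[
(\CAT/\cat{C})(V,\Sem(\monoid{P}))\;\cong\;\Th{\widehat{\cat{M}}}(\monoid{P},\Str(V))
\]
natural in $V\in\CAT/\cat{C}$ and $\monoid{P}\in\Th{\widehat{\cat{M}}}$ (recall $\Th{\widehat{\cat{M}}}(\monoid{P},\Str(V))=\Th{\widehat{\cat{M}}}^\op(\Str(V),\monoid{P})$, so this is exactly the hom-set isomorphism witnessing $\Str\dashv\Sem$). First I would unravel the left-hand side. Writing $\monoid{P}=(P,e,m)$ and $V\colon\cat{A}\longrightarrow\cat{C}$, a morphism $V\to\Sem(\monoid{P})$ in $\CAT/\cat{C}$ is a functor $W\colon\cat{A}\longrightarrow\Mod{\monoid{P}}{(\cat{C},\Phi)}$ with $U\circ W=V$. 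Since $U\circ W=V$ forces $W$ to send each $a\colon A\to A'$ in $\cat{A}$ to $Va$, such a $W$ amounts to a choice, for every $A\in\cat{A}$, of a natural transformation $\xi^{A}=(\xi^{A}_{X}\colon P(X)\longrightarrow\Phi_{X}(VA,VA))_{X\in\cat{M}}$ making $(VA,\xi^{A})$ a model (i.e.\ satisfying the two compatibility diagrams~(\ref{eqn:model_of_P})), subject to the requirement that each $Va$ be a homomorphism of models, i.e.\ that $\Phi_{X}(VA,Va)\circ\xi^{A}_{X}=\Phi_{X}(Va,VA')\circ\xi^{A'}_{X}$ for all $X$. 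Preservation of composition and identities by $W$ is then automatic, since these are inherited from $\cat{C}$ and $V$ is already a functor.

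Next I would package this data using the end defining $P^{(V)}$. By the universal property of $P^{(V)}(X)=\int_{A\in\cat{A}}\Phi_{X}(VA,VA)$, the homomorphism conditions on the morphisms $Va$ say precisely that, for each $p\in P(X)$, the family $(\xi^{A}_{X}(p))_{A\in\cat{A}}$ is a wedge, hence an element of $P^{(V)}(X)$; this defines functions $\eta_{X}\colon P(X)\longrightarrow P^{(V)}(X)$ characterised by $\pi_{A}\circ\eta_{X}=\xi^{A}_{X}$ for all $A$, and conversely any family $(\eta_{X})_{X}$ recovers the $\xi^{A}_{X}$ by postcomposing with the end projections $\pi_{A}$. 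Naturality of each $\xi^{A}$ in $X$ translates, projection by projection, into naturality of $\eta=(\eta_{X})_{X}$, so that $\eta\colon P\Longrightarrow P^{(V)}$ is a morphism of $\widehat{\cat{M}}$. Composing with the $\pi_{A}$ once more, and using that $\overline{e^{(V)}}$ and $\overline{m^{(V)}}$ were defined componentwise by $(\overline{\phi}_\cdot)_{VA}$ and $(\overline{\phi}_{X,Y})_{VA,VA,VA}$, one checks that the unit- and multiplication-compatibility diagrams~(\ref{eqn:model_of_P}) hold for every $A$ if and only if $\eta$ commutes with the units and with the multiplications of $\monoid{P}$ and $\Str(V)$ --- that is, if and only if $\eta$ is a monoid homomorphism $\monoid{P}\to\Str(V)$. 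This gives the desired bijection $W\mapsto\eta$; naturality in $V$ and in $\monoid{P}$ follows from functoriality of the end construction together with the definitions of $\Sem$ and $\Str$ on morphisms.

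Before all of this one should of course confirm that $\Str$ is well defined, i.e.\ that $(P^{(V)},e^{(V)},m^{(V)})$ is genuinely a monoid in $\widehat{\cat{M}}$; its unit and associativity axioms are the componentwise images, under the $\pi_{A}$, of the corresponding axioms of the metamodel $\Phi$, so this is routine. I do not expect a deep obstacle anywhere --- the whole argument is a diagram chase. The one point that needs care is the extranaturality bookkeeping in the second step: one must verify that ``the family $(\xi^{A}_{X})_{A}$ determines a map into the end $\int_{A}\Phi_{X}(VA,VA)$'' is \emph{literally} the same condition as ``each $Va$ is a homomorphism of models'', and that passing through the projections $\pi_{A}$ turns the two monoid-homomorphism squares for $\eta$ into the two model-compatibility squares for the $\xi^{A}$ with no variance slips. (A one-line remark that $\int_{A}\Phi_{X}(VA,VA)$ is again a large set --- since Grothendieck universes are closed under products indexed by their own elements --- disposes of the only potential size concern, so that $\Str(V)$ really lies in $\Th{\widehat{\cat{M}}}$.)
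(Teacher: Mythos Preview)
Your proposal is correct and follows essentially the same route as the paper's proof: both establish the hom-set bijection by interpolating the intermediate datum of a natural family $(\xi^{A}_{X}\colon P(X)\to\Phi_{X}(VA,VA))_{A,X}$, using the universal property of the end $P^{(V)}(X)=\int_{A}\Phi_{X}(VA,VA)$ to pass between this family and a monoid map $\monoid{P}\to\Str(V)$, and recognising that the naturality/homomorphism conditions on the $\xi^{A}$ are exactly what make them assemble into a functor $\cat{A}\to\Mod{\monoid{P}}{(\cat{C},\Phi)}$ over $\cat{C}$. Your additional remarks on size and on the well-definedness of $\Str$ are fine and not needed beyond what you wrote.
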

\begin{proof}
We show that there are bijections
\[\Th{\widehat{\cat{M}}}(\monoid{P},\Str(V))\cong
(\CAT/\cat{C})(V,\Sem(\monoid{P}))\]
natural in $\monoid{P}=(P,e,m)\in\Th{\widehat{\cat{M}}}$ and $(V\colon 
\cat{A}\longrightarrow\cat{C})\in\CAT/\cat{C}$.

In fact, we show that the following three types of data naturally 
correspond to each other.
\begin{enumerate}
\item A morphism $\alpha\colon \monoid{P}\longrightarrow\Str(V)$ in 
$\Th{\widehat{\cat{M}}}$; that is, a natural transformation 
\[
(\alpha_X\colon P(X)\longrightarrow P^{(V)}(X))_{X\in\cat{M}}
\]
making the suitable diagrams commute.
\item A natural transformation \[
(\xi_{A,X}\colon P(X)\longrightarrow \Phi_X(VA,VA))_{X\in\cat{M},
A\in\cat{A}}
\]
making the following diagrams commute for each $A\in\cat{A}$ and 
$X,Y\in\cat{M}$:
\begin{equation*}
\begin{tikzpicture}[baseline=-\the\dimexpr\fontdimen22\textfont2\relax ]
      \node (TL) at (0,1)  {$1$};
      \node (TR) at (2,1)  {$P(I)$};
      \node (BR) at (2,-1) {$\Phi_I(VA,VA)$};
      \draw[->] (TL) to node[auto,labelsize](T) {$\overline{e}$} (TR);
      \draw[->]  (TR) to node[auto,labelsize] {$\xi_{A,I}$} (BR);
      \draw[->]  (TL) to node[auto,swap,labelsize] 
      {$(\overline{\phi}_\cdot)_{VA}$} 
      (BR);
\end{tikzpicture} 
\quad
\begin{tikzpicture}[baseline=-\the\dimexpr\fontdimen22\textfont2\relax ]
      \node (TL) at (0,1)  {$P(Y)\times P(X)$};
      \node (TR) at (5.5,1)  {$P({Y\otimes X})$};
      \node (BL) at (0,-1) {$\Phi_Y(VA,VA)\times \Phi_X(VA,VA)$};
      \node (BR) at (5.5,-1) {$\Phi_{Y\otimes X}(VA,VA).$};
      \draw[->] (TL) to node[auto,labelsize](T) 
      {$\overline{m}_{X,Y}$} 
      (TR);
      \draw[->]  (TR) to node[auto,labelsize] {$\xi_{A,Y\otimes X}$} (BR);
      \draw[->]  (TL) to node[auto,swap,labelsize] {$\xi_{A,Y}\times 
      \xi_{A,X}$} 
      (BL);
      \draw[->]  (BL) to node[auto,labelsize] 
      {$(\overline{\phi}_{X,Y})_{VA,VA,VA}$} (BR);
\end{tikzpicture} 
\end{equation*}
\item A morphism $F\colon V\longrightarrow\Sem(\monoid{P})$ in $\CAT/\cat{C}$;
that is, a functor 
$F\colon\cat{A}\longrightarrow\Mod{\monoid{P}}{(\cat{C},\Phi)}$
such that $U\circ F=V$ 
($U\colon\Mod{\monoid{P}}{(\cat{C},\Phi)}\longrightarrow\cat{C}$ is the 
forgetful functor).
\end{enumerate}

The correspondence between 1 and 2 is by the universality of ends
(see (\ref{eqn:structure_of_V})). 
To give $\xi$ as in 2 \emph{without} requiring naturality
in $A\in\cat{A}$, is equivalent to give a 
function $\ob{F}\colon 
\ob{\cat{A}}\longrightarrow\ob{\Mod{\monoid{P}}{(\cat{C},\Phi)}}$
such that $\ob{U}\circ \ob{F}=\ob{V}$ (see (\ref{eqn:model_of_P})). 
To say that $\xi$ is natural also in $A\in\cat{A}$
is equivalent to saying that $\ob{F}$ extends to a functor 
$F\colon \cat{A}\longrightarrow\Mod{\monoid{P}}{(\cat{C},\Phi)}$
by mapping each morphism $f$ in $\cat{A}$ to $Vf$.
\end{proof}

\subsection{The classical cases}
We now show that one can recover the known
structure-semantics adjunctions for clones and monads, by restricting
our version of structure-semantics adjunctions 
(Theorem~\ref{thm:str_sem_small}).

In both cases of clones and monads, we shall consider the diagram
\[
\begin{tikzpicture}[baseline=-\the\dimexpr\fontdimen22\textfont2\relax ]
      \node(11T) at (0,2) 
      {$\Th{\cat{\widehat{M}}}^\op$};
      \node(22T) at (4,2) {$\CAT/\cat{C}$};
      \draw [->,transform canvas={yshift=5pt}]  (22T) to node 
      [auto,swap,labelsize]{$\Str$} (11T);
      \draw [->,transform canvas={yshift=-5pt}]  (11T) to node 
      [auto,swap,labelsize]{$\Sem$} (22T);
      \path (11T) to node[midway](mT){} (22T); 
      \node at (mT) [labelsize,rotate=90] {$\vdash$};
      \node(11) at (0,0) 
      {$\Th{\cat{M}}^\op$};
      \node(22) at (4,0) {$(\CAT/\cat{C})_{\mathrm{tr}}$};
      \draw [->,transform canvas={yshift=5pt}]  (22) to node 
      [auto,swap,labelsize]{$\Str'$} (11);
      \draw [->,transform canvas={yshift=-5pt}]  (11) to node 
      [auto,swap,labelsize]{$\Sem'$} (22);
      \path (11) to node[midway](m){} (22); 
      \node at (m) [labelsize,rotate=90] {$\vdash$};
      \draw[->] (11) to node[auto,labelsize] {$J$} (11T);
      \draw[->] (22) to node[auto,swap,labelsize] {$K$} (22T);
\end{tikzpicture}
\]
in which the top adjunction is the one we have constructed above,
the bottom adjunction is a classical structure-semantics adjunction,
and $J$ and $K$ are the canonical fully faithful functors
(the precise definition of $(\CAT/\cat{C})_\mathrm{tr}$ will be given below).
We shall prove that the two squares, one involving $\Str$ and $\Str'$,
the other involving $\Sem$ and $\Sem'$, commute,
showing that $\Str'$ (resp.~$\Sem'$) arises as a restriction of 
$\Str$ (resp.~$\Sem$).

First, that $K\circ \Sem'\cong \Sem\circ J$ holds is straightforward,
and this is true as soon as $\Sem'$ maps any $\monoid{T}\in\Th{\cat{M}}$ 
to the forgetful functor 
$U\colon\Mod{\monoid{T}}{(\cat{C},\Phi)}\longrightarrow\cat{C}$.
Indeed, for any theory $\monoid{T}=(T,e,m)$ in $\cat{M}$,
$J(\monoid{T})\in\Th{\widehat{\cat{M}}}$ has the underlying object 
$\cat{M}(-,T)\in\widehat{\cat{M}}$, and the description of 
$\Mod{J(\monoid{T})}{(\cat{C},\Phi)}$ in the previous section coincides with
$\Mod{\monoid{T}}{(\cat{C},\Phi)}$ by the Yoneda lemma.

Let us check that $J\circ \Str'\cong \Str\circ K$ holds.\footnote{This does not 
seem to follow formally from $K\circ \Sem'\cong \Sem\circ J$,
even if we take into consideration the fact that $J$ and $K$ are fully 
faithful.}
For this, we have to review the classical structure functors
and the tractability conditions.

We begin with the case of clones as treated in \cite{Linton_equational}.
Let $\cat{C}$ be a locally small category with finite powers and 
consider the standard metamodel $\Phi$ of $[\F,\Set]$ in $\cat{C}$
(derived from the enrichment $\enrich{-}{-}$ in 
Example~\ref{ex:clone_enrichment}). 
An object $V\colon\cat{A}\longrightarrow\cat{C}\in\CAT/\cat{C}$ is called 
\defemph{tractable} if and only if for any natural number $n$,
the set $[\cat{A},\cat{C}]((-)^n\circ V,V)$ is small.
Given a tractable $V$, $\Str'(V)\in\Th{[\F,\Set]}$ has the underlying 
functor $|\Str'(V)|$ mapping $[n]\in\F$ to $[\cat{A},\cat{C}]((-)^n\circ V,V)$.
On the other hand, our formula (\ref{eqn:structure_of_V})
reduces as follows:
\begin{align*}
P^{(V)}(X)&=\int_{A\in\cat{A}}\Phi_X(VA,VA)\\
&= \int_{A\in\cat{A}} [\F,\Set](X, \enrich{VA}{VA})\\
&\cong \int_{A\in\cat{A},[n]\in\F} \Set(X_n, \cat{C}((VA)^n,VA))\\
&\cong \int_{[n]\in\F} \Set\bigg(X_n, 
\int_{A\in\cat{A}}\cat{C}((VA)^n,VA)\bigg)\\
&\cong \int_{[n]\in\F} \Set(X_n, [\cat{A},\cat{C}]((-)^n\circ V,V))\\
&\cong [\F,\Set](X, |\Str'(V)|). 
\end{align*}
It is routine from this to see that $J\circ \Str'\cong \Str\circ K$ holds.

As for monads, we take as a classical structure-semantics adjunction 
the one in \cite[Section~II.~1]{Dubuc_Kan}.
Let $\cat{C}$ be a large category and consider the standard metamodel $\Phi$
of $[\cat{C},\cat{C}]$ in $\cat{C}$ (derived from the standard strict action 
$\ast$ in Example~\ref{ex:monad_standard_action}).
An object $V\colon \cat{A}\longrightarrow\cat{C}\in\CAT/\cat{C}$
is called \defemph{tractable} if and only if the right Kan extension  
$\Ran_V V$ of $V$ along itself exists.\footnote{In fact, in 
\cite[p.~68]{Dubuc_Kan} 
Dubuc defines tractability as a slightly stronger condition. 
However, the condition we have introduced above is the one which is used for 
the construction of structure-semantics adjunctions in \cite{Dubuc_Kan}.}
A functor of the form
$\Ran_V V$ acquires a canonical monad structure,
giving rise to the so-called {codensity monad of $V$}. 
For a tractable $V$, $\Str'(V)$ is defined to be the codensity monad of $V$.
Now let us return to our formula (\ref{eqn:structure_of_V}):
\begin{align*}
P^{(V)}(X)&=\int_{A\in\cat{A}}\Phi_X(VA,VA)\\
&= \int_{A\in\cat{A}} \cat{C}(XVA,{VA})\\
&\cong [\cat{A},\cat{C}](X\circ V,V)\\
&\cong [\cat{C},\cat{C}](X,\Ran_V V). 
\end{align*}
Again we see that $J\circ \Str'\cong \Str\circ K$ holds.

\section{Categories of models as double limits}
\label{sec:double_lim}
Let $\cat{M}$ be a metatheory, $\monoid{T}$ be a theory in $\cat{M}$,
$\cat{C}$ be a large category and $\Phi$ be a metamodel of $\cat{M}$ in 
$\cat{C}$.
Given these data, in Definition~\ref{def:metamodel_model} we have defined---in
a concrete manner---the category $\Mod{\monoid{T}}{(\cat{C},\Phi)}$ of models 
of $\monoid{T}$ in $\cat{C}$ with respect to $\Phi$,
equipped with the forgetful functor 
$U\colon\Mod{\monoid{T}}{(\cat{C},\Phi)}\longrightarrow\cat{C}$.

In this section, we give an abstract characterisation of the categories of 
models.
A similar result is known for the {Eilenberg--Moore category}
of a monad; Street \cite{Street_FTM}
has proved that it can be abstractly characterised as the lax limit in 
$\tCAT$ of a certain diagram canonically constructed from the original monad.
We prove that the categories of models in our framework can also be 
characterised by a certain universal property.
A suitable language to express this universal property is
that of \emph{pseudo double categories}~\cite{GP1}.
We show that the category $\Mod{\monoid{T}}{(\cat{C},\Phi)}$, together with 
the forgetful functor $U$ and some other natural data, form a double limit 
in the pseudo double category $\PPROF$ of large categories,
profunctors, functors and natural transformations.

\subsection{The universality of Eilenberg--Moore categories}
\label{subsec:EM_cat_as_2-lim}
Let us begin with quickly reviewing the 2-categorical characterisation in 
\cite{Street_FTM} of the Eilenberg--Moore category of a monad on a large 
category, in elementary terms.\footnote{The main point of the paper  
\cite{Street_FTM} is the introduction of the general 
notion of Eilenberg--Moore object in a 2-category $\tcat{B}$ via a universal 
property and show 
that, if exists, it satisfies certain formal properties of Eilenberg--Moore 
categories.
However, for our purpose, it suffices to consider the simple case 
$\tcat{B}=\tCAT$ only. It is left as future work to investigate whether 
we can develop a similar ``formal theory'' from the double-categorical 
universal property of categories of models.}
Let $\cat{C}$ be a large category and $\monoid{T}=(T,\eta,\mu)$ be a monad on 
$\cat{C}$.
The Eilenberg--Moore category $\cat{C}^\monoid{T}$ of $\monoid{T}$ 
is equipped with a canonical forgetful functor $U\colon 
\cat{C}^\monoid{T}\longrightarrow\cat{C}$, mapping an Eilenberg--Moore algebra 
$(C,\gamma)$ of $\monoid{T}$ to its underlying object $C$.
Moreover, there exists a canonical natural transformation $u\colon T\circ 
U\Longrightarrow U$, i.e., of type 
\[
\begin{tikzpicture}[baseline=-\the\dimexpr\fontdimen22\textfont2\relax ]
      \node (TL) at (0,0.75)  {$\cat{C}^\monoid{T}$};
      \node (TR) at (2,0.75)  {$\cat{C}^\monoid{T}$};
      \node (BL) at (0,-0.75) {$\cat{C}$};
      \node (BR) at (2,-0.75) {$\cat{C}.$};
      \draw[->]  (TL) to node[auto,swap,labelsize] {$U$} (BL);
      \draw[->]  (TL) to node[auto,labelsize] {$\id{\cat{C}^\monoid{T}}$} (TR);
      \draw[->]  (BL) to node[auto,swap,labelsize] {$T$} (BR);
      \draw[->]  (TR) to node[auto,labelsize](B) {$U$} (BR);
      \tzsquareup{1}{0}{$u$}
\end{tikzpicture} 
\] 
We are depicting $u$ in a square rather than in a triangle for later comparison 
with similar diagrams in a pseudo double category.
For each $(C,\gamma)\in\cat{C}^\monoid{T}$, 
the $(C,\gamma)$-th component of $u$ is simply $\gamma\colon TC\longrightarrow 
C$.
The claim is that the data $(\cat{C}^\monoid{T}, U,u)$ is characterised by a
certain universal property.

To state this universal property, let us define 
a \defemph{left $\monoid{T}$-module} to be 
a triple $(\cat{A},V,v)$ consisting of a large category $\cat{A}$,
a functor $V\colon \cat{A}\longrightarrow\cat{C}$ and a natural transformation 
$v\colon T\circ V\Longrightarrow V$,
such that the following equations hold:
\[
\begin{tikzpicture}[baseline=-\the\dimexpr\fontdimen22\textfont2\relax ]
      \node (TL) at (0,0.75)  {$\cat{A}$};
      \node (TR) at (2,0.75)  {$\cat{A}$};
      \node (BL) at (0,-0.75) {$\cat{C}$};
      \node (BR) at (2,-0.75) {$\cat{C}$};
      \draw[->] (TL) to node[auto,labelsize](T) {$\id{\cat{A}}$} 
      (TR);
      \draw[->]  (TR) to node[auto,labelsize] {$V$} (BR);
      \draw[->]  (TL) to node[auto,swap,labelsize] {$V$} 
      (BL);
      \draw[->, bend left=30] (BL) to node[auto,labelsize](B) 
      {$T$} 
      (BR);
      \draw[->, bend right=30] (BL) to node[auto,swap,labelsize](B) 
      {$\id{\cat{C}}$} 
      (BR);
      \tzsquareup{1}{0.25}{$v$}
      \tzsquareup{1}{-0.75}{$\eta$}
\end{tikzpicture}
\quad=\quad
\begin{tikzpicture}[baseline=-\the\dimexpr\fontdimen22\textfont2\relax ]
      \node (TL) at (0,0.75)  {$\cat{A}$};
      \node (TR) at (2,0.75)  {$\cat{A}$};
      \node (BL) at (0,-0.75) {$\cat{C}$};
      \node (BR) at (2,-0.75) {$\cat{C}$};
      \draw[->] (TL) to node[auto,labelsize](T) {$\id{\cat{A}}$} 
      (TR);
      \draw[->]  (TR) to node[auto,labelsize] {$V$} (BR);
      \draw[->]  (TL) to node[auto,swap,labelsize] {$V$} 
      (BL);
      \draw[->] (BL) to node[auto,swap,labelsize](B) {$\id{\cat{C}}$} 
      (BR);
      \tzsquareup{1}{0}{$\id{V}$}
\end{tikzpicture}
\]
\[
\begin{tikzpicture}[baseline=-\the\dimexpr\fontdimen22\textfont2\relax ]
      \node (TL) at (0,0.75)  {$\cat{A}$};
      \node (TR) at (2,0.75)  {$\cat{A}$};
      \node (BL) at (0,-0.75) {$\cat{C}$};
      \node (BR) at (2,-0.75) {$\cat{C}$};
      \draw[->] (TL) to node[auto,labelsize](T) {$\id{\cat{A}}$} 
      (TR);
      \draw[->]  (TR) to node[auto,labelsize] {$V$} (BR);
      \draw[->]  (TL) to node[auto,swap,labelsize] {$V$} 
      (BL);
      \draw[->, bend left=30] (BL) to node[auto,labelsize](B) 
      {$T$} 
      (BR);
      \draw[->, bend right=30] (BL) to node[auto,swap,labelsize](B) 
      {$T\circ T$} 
      (BR);
      \tzsquareup{1}{0.25}{$v$}
      \tzsquareup{1}{-0.75}{$\mu$}
\end{tikzpicture}
\quad=\quad
\begin{tikzpicture}[baseline=-\the\dimexpr\fontdimen22\textfont2\relax ]
      \node (TL) at (0,0.75)  {$\cat{A}$};
      \node (TR) at (2,0.75)  {$\cat{A}$};
      \node (BL) at (0,-0.75) {$\cat{C}$};
      \node (BR) at (2,-0.75) {$\cat{C}$};
      \node (TRR) at (4,0.75)  {$\cat{A}$};
      \node (BRR) at (4,-0.75) {$\cat{C}.$};
      \draw[->] (TL) to node[auto,labelsize](T) {$\id{\cat{A}}$} 
      (TR);
      \draw[->] (TR) to node[auto,labelsize](T2) {$\id{\cat{A}}$} 
      (TRR);
      \draw[->]  (TR) to node[auto,labelsize] {$V$} (BR);
      \draw[->]  (TRR) to node[auto,labelsize] {$V$} (BRR);
      \draw[->]  (TL) to node[auto,swap,labelsize] {$V$} 
      (BL);
      \draw[->] (BL) to node[auto,swap,labelsize](B) {$T$} 
      (BR);
      \draw[->] (BR) to node[auto,swap,labelsize](B) {$T$} 
      (BRR);
      \tzsquareup{1}{0}{$v$}
      \tzsquareup{3}{0}{$v$}
\end{tikzpicture}
\]

The triple $(\cat{C}^\monoid{T}, U,u)$ is then a \defemph{universal left 
$\monoid{T}$-module}, meaning that it satisfies the following:
\begin{enumerate}
\item it is a left $\monoid{T}$-module;
\item for any left $\monoid{T}$-module $(\cat{A},V,v)$,
there exists a unique functor $K\colon \cat{A}\longrightarrow\cat{C}^\monoid{T}$
such that 
\[
\begin{tikzpicture}[baseline=-\the\dimexpr\fontdimen22\textfont2\relax ]
      \node (TL) at (0,0.75)  {$\cat{A}$};
      \node (TR) at (2,0.75)  {$\cat{A}$};
      \node (BL) at (0,-0.75) {$\cat{C}$};
      \node (BR) at (2,-0.75) {$\cat{C}$};
      \draw[->] (TL) to node[auto,labelsize](T) {$\id{\cat{A}}$} 
      (TR);
      \draw[->]  (TR) to node[auto,labelsize] {$V$} (BR);
      \draw[->]  (TL) to node[auto,swap,labelsize] {$V$} 
      (BL);
      \draw[->] (BL) to node[auto,swap,labelsize](B) {$T$} (BR);
      \tzsquareup{1}{0}{$v$}
\end{tikzpicture}  
\quad=\quad
\begin{tikzpicture}[baseline=-\the\dimexpr\fontdimen22\textfont2\relax ]
      \node (TL) at (0,1.5)  {$\cat{A}$};
      \node (TR) at (2,1.5)  {$\cat{A}$};
      \node (BL) at (0,0) {$\cat{C}^\monoid{T}$};
      \node (BR) at (2,0) {$\cat{C}^\monoid{T}$};
      \node (BBL) at (0,-1.5) {$\cat{C}$};
      \node (BBR) at (2,-1.5) {$\cat{C}$};
      \draw[->] (TL) to node[auto,labelsize](T) {$\id{\cat{A}}$} 
      (TR);
      \draw[->]  (TR) to node[auto,labelsize] {$K$} (BR);
      \draw[->]  (TL) to node[auto,swap,labelsize] {$K$} 
      (BL);
      \draw[->] (BL) to node[auto,labelsize](B) {$\id{\cat{C}^\monoid{T}}$} 
      (BR);
      \draw[->]  (BR) to node[auto,labelsize] {$U$} (BBR);
      \draw[->]  (BL) to node[auto,swap,labelsize] {$U$} (BBL);
      \draw[->] (BBL) to node[auto,swap,labelsize](B) {$T$} (BBR);
      \tzsquareup{1}{0.75}{$\id{K}$}
      \tzsquareup{1}{-0.75}{$u$} 
\end{tikzpicture} 
\]
holds;
\item for any pair of left $\monoid{T}$-modules $(\cat{A},V,v)$ and 
$(\cat{A},V',v')$ on a common large category $\cat{A}$
and any natural transformation $\theta\colon V\Longrightarrow V'$ such that 
\[
\begin{tikzpicture}[baseline=-\the\dimexpr\fontdimen22\textfont2\relax ]
      \node (TL) at (0,0.75)  {$\cat{A}$};
      \node (TR) at (2,0.75)  {$\cat{A}$};
      \node (BL) at (0,-0.75) {$\cat{C}$};
      \node (BR) at (2,-0.75) {$\cat{C}$};
      \draw[->] (TL) to node[auto,labelsize](T) {$\id{\cat{A}}$} 
      (TR);
      \draw[->,bend right=30]  (TR) to node[auto,swap,labelsize] {$V$} (BR);
      \draw[->,bend left=30]  (TR) to node[auto,labelsize] {$V'$} (BR);
      \draw[->]  (TL) to node[auto,swap,labelsize] {$V$} 
      (BL);
      \draw[->] (BL) to node[auto,swap,labelsize](B) {$T$} 
      (BR);
      \draw[2cell] (1.75,0) to node[auto,labelsize] {$\theta$} (2.25,0);
      \tzsquareupswap{1}{0}{$v$}
\end{tikzpicture}
\quad=\quad
\begin{tikzpicture}[baseline=-\the\dimexpr\fontdimen22\textfont2\relax ]
      \node (TL) at (0,0.75)  {$\cat{A}$};
      \node (TR) at (2,0.75)  {$\cat{A}$};
      \node (BL) at (0,-0.75) {$\cat{C}$};
      \node (BR) at (2,-0.75) {$\cat{C}$};
      \draw[->] (TL) to node[auto,labelsize](T) {$\id{\cat{A}}$} 
      (TR);
      \draw[->]  (TR) to node[auto,labelsize] {$V'$} (BR);
      \draw[->,bend right=30]  (TL) to node[auto,swap,labelsize] {$V$} (BL);
      \draw[->,bend left=30]  (TL) to node[auto,labelsize] {$V'$} 
      (BL);
      \draw[->] (BL) to node[auto,swap,labelsize](B) {$T$} 
      (BR);
      \draw[2cell] (-0.25,0) to node[auto,labelsize] {$\theta$} (0.25,0);
      \tzsquareup{1}{0}{$v'$}
\end{tikzpicture}
\]
holds,
there exists a unique natural transformation $\sigma\colon K\Longrightarrow K'$
such that $\theta=U\circ \sigma$, where $K\colon 
\cat{A}\longrightarrow\cat{C}^\monoid{T}$
and $K'\colon\cat{A}\longrightarrow\cat{C}^\monoid{T}$ are the functors 
corresponding to $(\cat{A},V,v)$ and 
$(\cat{A},V',v')$ respectively.
\end{enumerate}
In more conceptual terms, what is asserted is that we have a family of 
isomorphisms of categories 
\[
\tCAT(\cat{A},\cat{C}^\monoid{T})\cong
\tCAT(\cat{A},\cat{C})^{\tCAT(\cat{A},\monoid{T})}
\]
natural in $\cat{A}\in\tCAT$,
where the right hand side denotes the Eilenberg--Moore category of the 
monad $\tCAT(\cat{A},\monoid{T})$; note that 
$\tCAT(\cat{A},-)$ is a 2-functor and therefore preserves monads.

As with any universal characterisation, the above property characterises 
the triple $(\cat{C}^\monoid{T},U,u)$ uniquely up to
unique isomorphisms.
One can also express this universal property in terms of the standard 
2-categorical limit notions, such as lax limit or weighted 2-limit 
\cite{Street_limits}.

\subsection{Pseudo double categories}
\label{subsec:pseudo_double_cat}
We shall see that our category of models admit a similar characterisation,
in a different setting:
instead of the 2-category $\tCAT$, we will work within
the \emph{pseudo double category} $\PPROF$.
The notion of pseudo double category is due to Grandis and Par{\'e}~\cite{GP1},
and it generalises the classical notion of double category \cite{Ehresmann}
in a way similar to the generalisation of 2-categories to bicategories.

Recall that a double category consists of \defemph{objects} $A$, 
\defemph{vertical 
morphisms}
$f\colon A\longrightarrow A^\prime$,
\defemph{horizontal morphisms} $X\colon A\pto B$
and \defemph{squares}
\[
\begin{tikzpicture}[baseline=-\the\dimexpr\fontdimen22\textfont2\relax ]
      \node (TL) at (0,1.5)  {$A$};
      \node (TR) at (2,1.5)  {$B$};
      \node (BL) at (0,0) {$A^\prime$};
      \node (BR) at (2,0) {$B^\prime$,};
      \draw[pto] (TL) to node[auto,labelsize](T) {$X$} (TR);
      \draw[->]  (TR) to node[auto,labelsize] {$g$} (BR);
      \draw[->]  (TL) to node[auto,swap,labelsize] {$f$} (BL);
      \draw[pto] (BL) to node[auto,swap,labelsize](B) {$X^\prime$} (BR);
      \tzsquare{1}{0.75}{$\alpha$}
\end{tikzpicture}
\]
together with several identity and composition operations,
namely: 
\begin{itemize}
\item for each object $A$ we have the \defemph{vertical identity morphism} 
$\id{A}\colon A\longrightarrow A$;
\item for each composable pair of vertical morphisms $f\colon A\longrightarrow 
A'$ and $f'\colon A'\longrightarrow A''$ we have the \defemph{vertical 
composition}
$f'\circ f\colon A\longrightarrow A''$;
\item for each horizontal morphism $X\colon A\pto B$ we have the 
\defemph{vertical identity square}
\[
\begin{tikzpicture}[baseline=-\the\dimexpr\fontdimen22\textfont2\relax ]
      \node (TL) at (0,1.5)  {$A$};
      \node (TR) at (2,1.5)  {$B$};
      \node (BL) at (0,0) {$A$};
      \node (BR) at (2,0) {$B$;};
      \draw[pto] (TL) to node[auto,labelsize](T) {$X$} (TR);
      \draw[->]  (TR) to node[auto,labelsize] {$\id{B}$} (BR);
      \draw[->]  (TL) to node[auto,swap,labelsize] {$\id{A}$} (BL);
      \draw[pto] (BL) to node[auto,swap,labelsize](B) {$X$} (BR);
      \tzsquare{1}{0.75}{$\id{X}$}
\end{tikzpicture}
\]
\item for each vertically composable pair of squares 
\[
\begin{tikzpicture}[baseline=-\the\dimexpr\fontdimen22\textfont2\relax ]
      \node (TL) at (0,0.75)  {$A$};
      \node (TR) at (2,0.75)  {$B$};
      \node (BL) at (0,-0.75) {$A^\prime$};
      \node (BR) at (2,-0.75) {$B^\prime$};
      \draw[pto] (TL) to node[auto,labelsize](T) {$X$} (TR);
      \draw[->]  (TR) to node[auto,labelsize] {$g$} (BR);
      \draw[->]  (TL) to node[auto,swap,labelsize] {$f$} (BL);
      \draw[pto] (BL) to node[auto,swap,labelsize](B) {$X^\prime$} (BR);
      \tzsquare{1}{0}{$\alpha$}
\end{tikzpicture}
\text{ and }
\begin{tikzpicture}[baseline=-\the\dimexpr\fontdimen22\textfont2\relax ]
      \node (TL) at (0,0.75)  {$A'$};
      \node (TR) at (2,0.75)  {$B'$};
      \node (BL) at (0,-0.75) {$A''$};
      \node (BR) at (2,-0.75) {$B''$};
      \draw[pto] (TL) to node[auto,labelsize](T) {$X'$} (TR);
      \draw[->]  (TR) to node[auto,labelsize] {$g'$} (BR);
      \draw[->]  (TL) to node[auto,swap,labelsize] {$f'$} (BL);
      \draw[pto] (BL) to node[auto,swap,labelsize](B) {$X''$} (BR);
      \tzsquare{1}{0}{$\alpha'$}
\end{tikzpicture}
\]
we have the \defemph{vertical composition}
\[
\begin{tikzpicture}[baseline=-\the\dimexpr\fontdimen22\textfont2\relax ]
      \node (TL) at (0,1.5)  {$A$};
      \node (TR) at (2,1.5)  {$B$};
      \node (BL) at (0,0) {$A''$};
      \node (BR) at (2,0) {$B''$;};
      \draw[pto] (TL) to node[auto,labelsize](T) {$X$} (TR);
      \draw[->]  (TR) to node[auto,labelsize] {$g'\circ g$} (BR);
      \draw[->]  (TL) to node[auto,swap,labelsize] {$f'\circ f$} (BL);
      \draw[pto] (BL) to node[auto,swap,labelsize](B) {$X''$} (BR);
      \tzsquare{1}{0.75}{$\alpha'\circ \alpha$}
\end{tikzpicture}
\]
\end{itemize}
and symmetrically:
\begin{itemize}
\item for each object $A$ we have the \defemph{horizontal identity morphism} 
$I_{A}\colon A\pto A$;
\item for each composable pair of horizontal morphisms $X\colon A\pto B$ and 
$Y\colon B\pto C$ we have the \defemph{horizontal composition}
$Y\otimes X\colon A\pto  C$;
\item for each vertical morphism $f\colon A\longrightarrow A'$ we have the 
\defemph{horizontal identity square}
\[
\begin{tikzpicture}[baseline=-\the\dimexpr\fontdimen22\textfont2\relax ]
      \node (TL) at (0,1.5)  {$A$};
      \node (TR) at (2,1.5)  {$A$};
      \node (BL) at (0,0) {$A'$};
      \node (BR) at (2,0) {$A'$;};
      \draw[pto] (TL) to node[auto,labelsize](T) {$I_A$} (TR);
      \draw[->]  (TR) to node[auto,labelsize] {$f$} (BR);
      \draw[->]  (TL) to node[auto,swap,labelsize] {$f$} (BL);
      \draw[pto] (BL) to node[auto,swap,labelsize](B) {$I_{A'}$} (BR);
      \tzsquare{1}{0.75}{$I_f$}
\end{tikzpicture}
\]
\item for each horizontally composable pair of squares 
\[
\begin{tikzpicture}[baseline=-\the\dimexpr\fontdimen22\textfont2\relax ]
      \node (TL) at (0,0.75)  {$A$};
      \node (TR) at (2,0.75)  {$B$};
      \node (BL) at (0,-0.75) {$A^\prime$};
      \node (BR) at (2,-0.75) {$B^\prime$};
      \draw[pto] (TL) to node[auto,labelsize](T) {$X$} (TR);
      \draw[->]  (TR) to node[auto,labelsize] {$g$} (BR);
      \draw[->]  (TL) to node[auto,swap,labelsize] {$f$} (BL);
      \draw[pto] (BL) to node[auto,swap,labelsize](B) {$X^\prime$} (BR);
      \tzsquare{1}{0}{$\alpha$}
\end{tikzpicture}
\text{ and }
\begin{tikzpicture}[baseline=-\the\dimexpr\fontdimen22\textfont2\relax ]
      \node (TL) at (0,0.75)  {$B$};
      \node (TR) at (2,0.75)  {$C$};
      \node (BL) at (0,-0.75) {$B'$};
      \node (BR) at (2,-0.75) {$C'$};
      \draw[pto] (TL) to node[auto,labelsize](T) {$Y$} (TR);
      \draw[->]  (TR) to node[auto,labelsize] {$h$} (BR);
      \draw[->]  (TL) to node[auto,swap,labelsize] {$g$} (BL);
      \draw[pto] (BL) to node[auto,swap,labelsize](B) {$Y'$} (BR);
      \tzsquare{1}{0}{$\beta$}
\end{tikzpicture}
\]
we have the \defemph{horizontal composition}
\[
\begin{tikzpicture}[baseline=-\the\dimexpr\fontdimen22\textfont2\relax ]
      \node (TL) at (0,1.5)  {$A$};
      \node (TR) at (2,1.5)  {$C$};
      \node (BL) at (0,0) {$A'$};
      \node (BR) at (2,0) {$C'$.};
      \draw[pto] (TL) to node[auto,labelsize](T) {$Y\otimes X$} (TR);
      \draw[->]  (TR) to node[auto,labelsize] {$h$} (BR);
      \draw[->]  (TL) to node[auto,swap,labelsize] {$f$} (BL);
      \draw[pto] (BL) to node[auto,swap,labelsize](B) {$Y'\otimes X'$} (BR);
      \tzsquare{1}{0.75}{$\beta\otimes \alpha$}
\end{tikzpicture}
\]
\end{itemize}
These identity and composition operations are required to satisfy several 
axioms, such as the unit and associativity axioms for vertical 
(resp.~horizontal) 
identity and composition, as well as the axiom 
$\id{I_A}=I_\id{A}$ for each object $A$ and 
the \emph{interchange law}, saying that whenever we have a configuration 
of squares as in 
\[
\begin{tikzpicture}[baseline=-\the\dimexpr\fontdimen22\textfont2\relax ]
      \node (TL) at (0,3)  {$\bullet$};
      \node (TM) at (2,3)  {$\bullet$};
      \node (TR) at (4,3)  {$\bullet$};
      \node (ML) at (0,1.5)  {$\bullet$};
      \node (MM) at (2,1.5)  {$\bullet$};
      \node (MR) at (4,1.5)  {$\bullet$};
      \node (BL) at (0,0)  {$\bullet$};
      \node (BM) at (2,0)  {$\bullet$};
      \node (BR) at (4,0)  {$\bullet$,};
      \draw[pto] (TL) to (TM);
      \draw[pto] (TM) to (TR);
      \draw[pto] (ML) to (MM);
      \draw[pto] (MM) to (MR);
      \draw[pto] (BL) to (BM);
      \draw[pto] (BM) to (BR);
      \draw[->]  (TR) to (MR);
      \draw[->]  (MR) to (BR);
      \draw[->]  (TM) to (MM);
      \draw[->]  (MM) to (BM);
      \draw[->]  (TL) to (ML);
      \draw[->]  (ML) to (BL);
      \tzsquare{1}{2.25}{$\alpha$}
      \tzsquare{1}{0.75}{$\alpha'$}
      \tzsquare{3}{2.25}{$\beta$}
      \tzsquare{3}{0.75}{$\beta'$}
\end{tikzpicture}
\]
$(\beta'\otimes \alpha')\circ(\beta\otimes \alpha)=(\beta'\circ 
\beta)\otimes(\alpha'\circ\alpha)$ holds.

\medskip

Some naturally arising double-category-like structure, 
including $\PPROF$, are 
such that whose vertical morphisms are homomorphism-like (e.g., functors)
and whose horizontal morphisms are bimodule-like (e.g., profunctors);
cf.~\cite[Section~1]{Shulman_framed}.
However, a problem crops up from the bimodule-like horizontal
morphisms: 
in general, their composition is not unital nor associative on the nose.
Therefore such structures fail to form  
(strict) double categories, but instead form 
\emph{pseudo} (or \emph{weak}) \emph{double 
categories}~\cite{GP1,Leinster_book,Garner_thesis,Shulman_framed},
in which horizontal composition is allowed to 
be unital and associative up to suitable isomorphism.\footnote{In the 
literature, definitions of pseudo double category differ as to 
whether to weaken horizontal compositions or vertical compositions.
We follow  \cite{Garner_thesis,Shulman_framed} and weaken horizontal 
compositions, but note that 
the original paper~\cite{GP1} weakens vertical compositions.}
We refer the reader to e.g.~\cite[Section~2.1]{Garner_thesis}
for a detailed definition of pseudo double category.

\begin{example}[\cite{GP1}]
\label{ex:H_construction}
Let $\tcat{B}$ be a bicategory.
This induces a pseudo double category $\dcat{H}\tcat{B}$, given as follows:
\begin{itemize}
\item an object of $\dcat{H}\tcat{B}$ is an object of $\tcat{B}$;
\item all vertical morphisms of $\dcat{H}\tcat{B}$ are vertical identity 
morphisms;
\item a horizontal morphism of $\dcat{H}\tcat{B}$ is a 1-cell of $\tcat{B}$;
\item a square of $\dcat{H}\tcat{B}$ is a 2-cell of $\tcat{B}$.
\end{itemize}

Conversely, for any pseudo double category $\dcat{D}$,
we obtain a bicategory $\tcat{H}\dcat{D}$ given as follows:
\begin{itemize}
\item an object of $\tcat{H}\dcat{D}$ is an object of $\dcat{D}$;
\item a 1-cell of $\tcat{H}\dcat{D}$ is a horizontal morphism of $\dcat{D}$;
\item a 2-cell of $\tcat{H}\dcat{D}$ is a square in $\dcat{D}$ whose 
horizontal source and target are both vertical identity morphisms.
\end{itemize}
\end{example}

Let us introduce the pseudo double category $\PPROF$.
\begin{definition}[{\cite[Section~3.1]{GP1}}]
We define the pseudo double category $\PPROF$ as follows.
\begin{itemize}
\item An object is a large category.
\item A vertical morphism from $\cat{A}$ to $\cat{A'}$ is a functor 
$F\colon \cat{A}\longrightarrow\cat{A'}$.
\item A horizontal morphism from $\cat{A}$ to $\cat{B}$ is 
a profunctor $H\colon\cat{A}\pto \cat{B}$, i.e.,
a functor $H\colon \cat{B}^\op\times\cat{A}\longrightarrow\SET$. 
Horizontal identities and horizontal compositions are the same as 
in Definition~\ref{def:profunctor}.
\item A square as in 
\[
\begin{tikzpicture}[baseline=-\the\dimexpr\fontdimen22\textfont2\relax ]
      \node (TL) at (0,1.5)  {$\cat{A}$};
      \node (TR) at (2,1.5)  {$\cat{B}$};
      \node (BL) at (0,0) {$\cat{A^\prime}$};
      \node (BR) at (2,0) {$\cat{B^\prime}$};
      \draw[pto] (TL) to node[auto,labelsize](T) {$H$} (TR);
      \draw[->]  (TR) to node[auto,labelsize] {$G$} (BR);
      \draw[->]  (TL) to node[auto,swap,labelsize] {$F$} (BL);
      \draw[pto] (BL) to node[auto,swap,labelsize](B) {$H^\prime$} (BR);
      \tzsquare{1}{0.75}{$\alpha$}
\end{tikzpicture}
\] 
is a natural transformation of type
\[
\begin{tikzpicture}[baseline=-\the\dimexpr\fontdimen22\textfont2\relax ]
      \node (L) at (0,0)  {$\cat{B}^\op\times\cat{A}$};
      \node (T) at (2.5,-0.7)  {$\cat{B'}^\op\times\cat{A'}$};
      \node (R) at (5,0) {$\SET$.};
      \draw[->, bend left=-5] (L) to node[auto,swap,labelsize]{$G^\op\times F$} 
      (T);
      \draw[->, bend left=-5] (T) to node[auto,swap,labelsize]{$H'$} (R);
      \draw[->, bend left=20] (L) to node[auto,labelsize]{$H$} 
      (R);
      \tzsquare{2.5}{0}{$\alpha$}
\end{tikzpicture}
\]
\end{itemize}
\end{definition}

Observe that $\PROF=\tcat{H}\PPROF$,
using the construction introduced in Example~\ref{ex:H_construction}.

Given a pseudo double category $\dcat{D}$,
denote by $\dcat{D}^\op$, $\dcat{D}^\co$, and $\dcat{D}^\coop$
the pseudo double categories obtained from $\dcat{D}$ by reversing
the horizontal direction, reversing the vertical direction 
and reversing both the horizontal and vertical directions,
respectively.
In the following we shall mainly work within $\PPROF^\op$,
though most of the diagrams are symmetric in the horizontal direction and 
this makes little difference.
(In fact, the pseudo double category defined in \cite[Section~3.1]{GP1} 
amounts to our $\PPROF^\op$, because our convention on the direction of 
profunctors differs from theirs.)

\subsection{The universality of categories of models}

Let $\cat{M}$ be a metatheory, $\monoid{T}=(T,e,m)$ be a theory in $\cat{M}$,
$\cat{C}$ be a large category, and $\Phi=(\Phi, 
\overline{\phi}_\cdot,\overline{\phi})$ be a metamodel of $\cat{M}$ in 
$\cat{C}$.
Recall from Section~\ref{sec:metamodel_model} that in the data of 
the metamodel 
$(\Phi,\overline{\phi}_\cdot,\overline{\phi})$,
the natural transformations 
\[
((\overline{\phi}_\cdot)_C\colon 1\longrightarrow \Phi_I(C,C))_{C\in\cat{C}}
\]
and 
\[
((\overline{\phi}_{X,Y})_{A,B,C}\colon \Phi_Y(B,C)\times 
\Phi_X(A,B)\longrightarrow \Phi_{Y\otimes 
X}(A,C))_{X,Y\in\cat{M},A,B,C\in\cat{C}},
\]
may be replaced by
the natural transformations 
\[
((\phi_\cdot)_{A,B}\colon\cat{C}(A,B)\longrightarrow 
\Phi_I(A,B))_{A,B\in\cat{C}}
\]
and
\[
((\phi_{X,Y})_{A,B}\colon (\Phi_Y \ptensorrev 
\Phi_X)(A,B)\longrightarrow\Phi_{Y\otimes 
X}(A,B))_{X,Y\in\cat{M},A,B\in\cat{C}},
\]
respectively.
In this section we shall mainly use the expression of 
metamodel via the data $(\Phi,\phi_\cdot,\phi)$.
The category of models $\Mod{\monoid{T}}{(\cat{C},\Phi)}$,
henceforth abbreviated as $\Mod{\monoid{T}}{\cat{C}}$,
defined in Definition~\ref{def:metamodel_model} admits a canonical 
forgetful functor $U\colon 
\Mod{\monoid{T}}{\cat{C}}\longrightarrow\cat{C}$
and a natural transformation (a square in $\PPROF^\op$) $u$ as in
\[
\begin{tikzpicture}[baseline=-\the\dimexpr\fontdimen22\textfont2\relax ]
      \node (TL) at (0,0.75)  {$\Mod{\monoid{T}}{\cat{C}}$};
      \node (TR) at (5,0.75)  {$\Mod{\monoid{T}}{\cat{C}}$};
      \node (BL) at (0,-0.75) {$\cat{C}$};
      \node (BR) at (5,-0.75) {$\cat{C}.$};
      \draw[->]  (TL) to node[auto,swap,labelsize] {$U$} (BL);
      \draw[pto]  (TL) to node[auto,labelsize] 
      {$\Mod{\monoid{T}}{\cat{C}}(-,-)$} (TR);
      \draw[pto]  (BL) to node[auto,swap,labelsize] {$\Phi_T$} (BR);
      \draw[->]  (TR) to node[auto,labelsize](B) {$U$} (BR);
      \tzsquare{2.5}{0}{$u$}
\end{tikzpicture} 
\] 
Concretely, $u$ is a natural transformation 
\[
(u_{(C,\xi),(C',\xi')}\colon 
\Mod{\monoid{T}}{\cat{C}}((C,\xi),(C',\xi'))\longrightarrow
\Phi_T(C,C'))_{(C,\xi),(C',\xi')\in\Mod{\monoid{T}}{\cat{C}}}
\]
whose $((C,\xi),(C',\xi'))$-th component 
maps each morphism $f\colon (C,\xi)\longrightarrow (C',\xi')$ in 
$\Mod{\monoid{T}}{\cat{C}}$ to the element 
$\Phi_T(C,f)(\xi)=\Phi_T(f,C')(\xi')\in \Phi_T(C,C')$.
Alternatively, by the Yoneda lemma, $u$ may be equivalently given as 
a natural transformation 
\[
(\overline{u}_{(C,\xi)}\colon 
1\longrightarrow
\Phi_T(C,C))_{(C,\xi)\in\Mod{\monoid{T}}{\cat{C}}}
\]
whose $(C,\xi)$-th component maps the unique element of $1$ to 
$\xi\in\Phi_T(C,C)$.

We claim that the triple $(\Mod{\monoid{T}}{\cat{C}},U,u)$ 
has a certain universal property.
\begin{definition}
\label{def:double_cone}
We define a \defemph{vertical double cone over $\Phi(\monoid{T})$}
to be a triple $(\cat{A},V,v)$ consisting of a large category $\cat{A}$,
a functor $V\colon\cat{A}\longrightarrow\cat{C}$, and a square $v$ in 
$\PPROF^\op$ of type 
\[
\begin{tikzpicture}[baseline=-\the\dimexpr\fontdimen22\textfont2\relax ]
      \node (TL) at (0,0.75)  {$\cat{A}$};
      \node (TR) at (2,0.75)  {$\cat{A}$};
      \node (BL) at (0,-0.75) {$\cat{C}$};
      \node (BR) at (2,-0.75) {$\cat{C},$};
      \draw[->]  (TL) to node[auto,swap,labelsize] {$V$} (BL);
      \draw[pto]  (TL) to node[auto,labelsize] 
      {$\cat{A}(-,-)$} (TR);
      \draw[pto]  (BL) to node[auto,swap,labelsize] {$\Phi_T$} (BR);
      \draw[->]  (TR) to node[auto,labelsize](B) {$V$} (BR);
      \tzsquare{1}{0}{$v$}
\end{tikzpicture} 
\]
satisfying the following equations:
\begin{equation*}
\begin{tikzpicture}[baseline=-\the\dimexpr\fontdimen22\textfont2\relax ]
      \node (TL) at (0,0.75)  {$\cat{A}$};
      \node (TR) at (2,0.75)  {$\cat{A}$};
      \node (BL) at (0,-0.75) {$\cat{C}$};
      \node (BR) at (2,-0.75) {$\cat{C}$};
      \draw[pto] (TL) to node[auto,labelsize](T) {$\cat{A}(-,-)$} 
      (TR);
      \draw[->]  (TR) to node[auto,labelsize] {$V$} (BR);
      \draw[->]  (TL) to node[auto,swap,labelsize] {$V$} 
      (BL);
      \draw[pto, bend left=30] (BL) to node[auto,labelsize](B) 
      {$\Phi_T$} 
      (BR);
      \draw[pto, bend right=30] (BL) to node[auto,swap,labelsize](B) 
      {$\Phi_I$} 
      (BR);
      \tzsquare{1}{0.25}{$v$}
      \tzsquare{1}{-0.75}{$\Phi_e$}
\end{tikzpicture}
\quad=\quad
\begin{tikzpicture}[baseline=-\the\dimexpr\fontdimen22\textfont2\relax ]
      \node (TL) at (0,0.75)  {$\cat{A}$};
      \node (TR) at (2,0.75)  {$\cat{A}$};
      \node (BL) at (0,-0.75) {$\cat{C}$};
      \node (BR) at (2,-0.75) {$\cat{C}$};
      \draw[pto] (TL) to node[auto,labelsize](T) {$\cat{A}(-,-)$} 
      (TR);
      \draw[->]  (TR) to node[auto,labelsize] {$V$} (BR);
      \draw[->]  (TL) to node[auto,swap,labelsize] {$V$} 
      (BL);
      \draw[pto, bend left=30] (BL) to node[auto,labelsize](B) {$\cat{C}(-,-)$} 
      (BR);
      \draw[pto, bend right=30] (BL) to node[auto,swap,labelsize](B) 
      {$\Phi_I$} 
      (BR);
      \tzsquare{1}{0.25}{$I_{V}$}
      \tzsquare{1}{-0.75}{$\phi_\cdot$}
\end{tikzpicture}
\end{equation*}
\begin{equation*}
\begin{tikzpicture}[baseline=-\the\dimexpr\fontdimen22\textfont2\relax ]
      \node (TL) at (0,0.75)  {$\cat{A}$};
      \node (TR) at (2,0.75)  {$\cat{A}$};
      \node (BL) at (0,-0.75) {$\cat{C}$};
      \node (BR) at (2,-0.75) {$\cat{C}$};
      \draw[pto] (TL) to node[auto,labelsize](T) {$\cat{A}(-,-)$} 
      (TR);
      \draw[->]  (TR) to node[auto,labelsize] {$V$} (BR);
      \draw[->]  (TL) to node[auto,swap,labelsize] {$V$} 
      (BL);
      \draw[pto, bend left=30] (BL) to node[auto,labelsize](B) 
      {$\Phi_T$} 
      (BR);
      \draw[pto, bend right=30] (BL) to node[auto,swap,labelsize](B) 
      {$\Phi_{T\otimes T}$} 
      (BR);
      \tzsquare{1}{0.25}{$v$}
      \tzsquare{1}{-0.75}{$\Phi_m$}
\end{tikzpicture}
\quad=\quad
\begin{tikzpicture}[baseline=-\the\dimexpr\fontdimen22\textfont2\relax ]
      \node (TL) at (0,0.75)  {$\cat{A}$};
      \node (TR) at (2,0.75)  {$\cat{A}$};
      \node (BL) at (0,-0.75) {$\cat{C}$};
      \node (BR) at (2,-0.75) {$\cat{C}$};
      \node (TRR) at (4,0.75)  {$\cat{A}$};
      \node (BRR) at (4,-0.75) {$\cat{C}$};
      \node at (4.05,-0.75) {$\phantom{\cat{C}}$.};
      \draw[pto] (TL) to node[auto,labelsize](T) {$\cat{A}(-,-)$} 
      (TR);
      \draw[pto] (TR) to node[auto,labelsize](T2) {$\cat{A}(-,-)$} 
      (TRR);
      \draw[->]  (TR) to node[auto,labelsize] {$V$} (BR);
      \draw[->]  (TRR) to node[auto,labelsize] {$V$} (BRR);
      \draw[->]  (TL) to node[auto,swap,labelsize] {$V$} 
      (BL);
      \draw[pto] (BL) to node[auto,labelsize](B) {$\Phi_T$} 
      (BR);
      \draw[pto] (BR) to node[auto,labelsize](B) {$\Phi_T$} 
      (BRR);
      \draw[pto, bend right=30] (BL) to node[auto,swap,labelsize](B) 
      {$\Phi_{T\otimes T}$} 
      (BRR);
      \draw[pto, bend left=30] (TL) to node[auto,labelsize](B) 
      {$\cat{A}(-,-)$} 
      (TRR);
      \tzsquare{1}{0}{$v$}
      \tzsquare{3}{0}{$v$}
      \tzsquare{2}{-1.1}{$\phi_{T,T}$}
      \tzsquare{2}{1.1}{$\cong$}
\end{tikzpicture}
\end{equation*}
\end{definition}
Using this notion, we can state the universal property of 
the triple $(\Mod{\monoid{T}}{\cat{C}},$ $U,u)$, just as in the case of 
Eilenberg--Moore categories.

\begin{theorem}
\label{thm:double_categorical_univ_property}
Let $\cat{M}=(\cat{M},I,\otimes)$ be a metatheory, $\monoid{T}=(T,e,m)$ be a 
theory in $\cat{M}$, $\cat{C}$ be a large category, and 
$\Phi=(\Phi,\phi_\cdot,\phi)$ be a metamodel of $\cat{M}$ in $\cat{C}$.
The triple $(\Mod{\monoid{T}}{\cat{C}},U,u)$ defined above is 
a universal vertical double cone over $\Phi(\monoid{T})$,
namely:\begin{enumerate}
\item it is a vertical double cone over $\Phi(\monoid{T})$;
\item for any vertical double cone $(\cat{A},V,v)$ over $\Phi(\monoid{T})$,
there exists a unique functor $K\colon\cat{A}\longrightarrow
\Mod{\monoid{T}}{\cat{C}}$ such that 
\[
\begin{tikzpicture}[baseline=-\the\dimexpr\fontdimen22\textfont2\relax ]
      \node (TL) at (0,0.75)  {$\cat{A}$};
      \node (TR) at (2,0.75)  {$\cat{A}$};
      \node (BL) at (0,-0.75) {$\cat{C}$};
      \node (BR) at (2,-0.75) {$\cat{C}$};
      \draw[pto] (TL) to node[auto,labelsize](T) {$\cat{A}(-,-)$} 
      (TR);
      \draw[->]  (TR) to node[auto,labelsize] {$V$} (BR);
      \draw[->]  (TL) to node[auto,swap,labelsize] {$V$} 
      (BL);
      \draw[pto] (BL) to node[auto,swap,labelsize](B) {$\Phi_T$} (BR);
      \tzsquare{1}{0}{$v$}
\end{tikzpicture}  
\quad=\quad
\begin{tikzpicture}[baseline=-\the\dimexpr\fontdimen22\textfont2\relax ]
      \node (TL) at (0,1.5)  {$\cat{A}$};
      \node (TR) at (5,1.5)  {$\cat{A}$};
      \node (BL) at (0,0) {$\Mod{\monoid{T}}{\cat{C}}$};
      \node (BR) at (5,0) {$\Mod{\monoid{T}}{\cat{C}}$};
      \node (BBL) at (0,-1.5) {$\cat{C}$};
      \node (BBR) at (5,-1.5) {$\cat{C}$};
      \draw[pto] (TL) to node[auto,labelsize](T) {${\cat{A}}(-,-)$} 
      (TR);
      \draw[->]  (TR) to node[auto,labelsize] {$K$} (BR);
      \draw[->]  (TL) to node[auto,swap,labelsize] {$K$} 
      (BL);
      \draw[pto] (BL) to node[auto,labelsize](B) 
      {$\Mod{\monoid{T}}{\cat{C}}(-,-)$} 
      (BR);
      \draw[->]  (BR) to node[auto,labelsize] {$U$} (BBR);
      \draw[->]  (BL) to node[auto,swap,labelsize] {$U$} (BBL);
      \draw[pto] (BBL) to node[auto,swap,labelsize](B) {$\Phi_T$} (BBR);
      \tzsquare{2.5}{0.75}{$I_{K}$}
      \tzsquare{2.5}{-0.75}{$u$} 
\end{tikzpicture} 
\]
holds;
\item for any pair of vertical cones $(\cat{A},V,v)$ and $(\cat{A'},V',v')$
over $\Phi(\monoid{T})$, any horizontal morphism $H\colon \cat{A}\pto \cat{A'}$
in $\PPROF^\op$ and any square 
\[
\begin{tikzpicture}[baseline=-\the\dimexpr\fontdimen22\textfont2\relax ]
      \node (TL) at (0,0.75)  {$\cat{A}$};
      \node (TR) at (2,0.75)  {$\cat{A'}$};
      \node (BL) at (0,-0.75) {$\cat{C}$};
      \node (BR) at (2,-0.75) {$\cat{C}$};
      \draw[pto] (TL) to node[auto,labelsize](T) {$H$} (TR);
      \draw[->]  (TR) to node[auto,labelsize] {$V'$} (BR);
      \draw[->]  (TL) to node[auto,swap,labelsize] {$V$} (BL);
      \draw[pto] (BL) to node[auto,swap,labelsize](B) {$\cat{C}(-,-)$} (BR);
      \tzsquare{1}{0}{$\theta$}
\end{tikzpicture}  
\]
in $\PPROF^\op$ such that 
\begin{equation}
\label{eqn:H_theta}
\begin{tikzpicture}[baseline=-\the\dimexpr\fontdimen22\textfont2\relax ]
      \node (TL) at (0,0.75)  {$\cat{A}$};
      \node (TM) at (2,0.75)  {$\cat{A}$};
      \node (TR) at (4,0.75)  {$\cat{A'}$};
      \node (BL) at (0,-0.75) {$\cat{C}$};
      \node (BM) at (2,-0.75) {$\cat{C}$};
      \node (BR) at (4,-0.75) {$\cat{C}$};
      \draw[pto, bend left=30] (TL) to node[auto,labelsize](T) {$H$} (TR);
      \draw[pto] (TL) to node[auto,labelsize](T) {$\cat{A}(-,-)$} (TM);
      \draw[pto] (TM) to node[auto,labelsize](T) {$H$} (TR);
      \draw[->]  (TL) to node[auto,swap,labelsize] {$V$} (BL);
      \draw[->]  (TM) to node[auto,labelsize] {$V$} (BM);
      \draw[->]  (TR) to node[auto,labelsize] {$V'$} (BR);
      \draw[pto] (BL) to node[auto,swap,labelsize](B) {$\Phi_T$} (BM);
      \draw[pto] (BM) to node[auto,swap,labelsize](B) {$\cat{C}(-,-)$} (BR);
      \draw[pto, bend left=-30] (BL) to node[auto,swap,labelsize](T) {$\Phi_T$} 
      (BR);
      \tzsquare{2}{1.1}{$\cong$}
      \tzsquare{1}{0}{$v$}
      \tzsquare{3}{0}{$\theta$}
      \tzsquare{2}{-1.1}{$\cong$}
\end{tikzpicture}
\ =\ 
\begin{tikzpicture}[baseline=-\the\dimexpr\fontdimen22\textfont2\relax ]
      \node (TL) at (0,0.75)  {$\cat{A}$};
      \node (TM) at (2,0.75)  {$\cat{A'}$};
      \node (TR) at (4,0.75)  {$\cat{A'}$};
      \node (BL) at (0,-0.75) {$\cat{C}$};
      \node (BM) at (2,-0.75) {$\cat{C}$};
      \node (BR) at (4,-0.75) {$\cat{C}$};
      \draw[pto, bend left=30] (TL) to node[auto,labelsize](T) {$H$} (TR);
      \draw[pto] (TL) to node[auto,labelsize](T) {$H$} (TM);
      \draw[pto] (TM) to node[auto,labelsize](T) {$\cat{A'}(-,-)$} (TR);
      \draw[->]  (TL) to node[auto,swap,labelsize] {$V$} (BL);
      \draw[->]  (TM) to node[auto,labelsize] {$V'$} (BM);
      \draw[->]  (TR) to node[auto,labelsize] {$V'$} (BR);
      \draw[pto] (BL) to node[auto,swap,labelsize](B) {$\cat{C}(-,-)$} (BM);
      \draw[pto] (BM) to node[auto,swap,labelsize](B) {$\Phi_T$} (BR);
      \draw[pto, bend left=-30] (BL) to node[auto,swap,labelsize](T) {$\Phi_T$} 
      (BR);
      \tzsquare{2}{1.1}{$\cong$}
      \tzsquare{1}{0}{$\theta$}
      \tzsquare{3}{0}{$v'$}
      \tzsquare{2}{-1.1}{$\cong$}
\end{tikzpicture}  
\end{equation}
holds, there exists a unique square 
\[
\begin{tikzpicture}[baseline=-\the\dimexpr\fontdimen22\textfont2\relax ]
      \node (TL) at (0,0.75)  {$\cat{A}$};
      \node (TR) at (5,0.75)  {$\cat{A'}$};
      \node (BL) at (0,-0.75) {$\Mod{\monoid{T}}{\cat{C}}$};
      \node (BR) at (5,-0.75) {$\Mod{\monoid{T}}{\cat{C}}$};
      \draw[pto] (TL) to node[auto,labelsize](T) {$H$} (TR);
      \draw[->]  (TR) to node[auto,labelsize] {$K'$} (BR);
      \draw[->]  (TL) to node[auto,swap,labelsize] {$K$} (BL);
      \draw[pto] (BL) to node[auto,swap,labelsize](B) 
      {$\Mod{\monoid{T}}{\cat{C}}(-,-)$} (BR);
      \tzsquare{2.5}{0}{$\sigma$}
\end{tikzpicture}  
\]
in $\PPROF^\op$ such that 
\[
\begin{tikzpicture}[baseline=-\the\dimexpr\fontdimen22\textfont2\relax ]
      \node (TL) at (0,0.75)  {$\cat{A}$};
      \node (TR) at (2,0.75)  {$\cat{A'}$};
      \node (BL) at (0,-0.75) {$\cat{C}$};
      \node (BR) at (2,-0.75) {$\cat{C}$};
      \draw[pto] (TL) to node[auto,labelsize](T) {$H$} (TR);
      \draw[->]  (TR) to node[auto,labelsize] {$V'$} (BR);
      \draw[->]  (TL) to node[auto,swap,labelsize] {$V$} (BL);
      \draw[pto] (BL) to node[auto,swap,labelsize](B) {$\cat{C}(-,-)$} (BR);
      \tzsquare{1}{0}{$\theta$}
\end{tikzpicture}  
\quad =\quad
\begin{tikzpicture}[baseline=-\the\dimexpr\fontdimen22\textfont2\relax ]
      \node (TL) at (0,1.5)  {$\cat{A}$};
      \node (TR) at (5,1.5)  {$\cat{A'}$};
      \node (ML) at (0,0) {$\Mod{\monoid{T}}{\cat{C}}$};
      \node (MR) at (5,0) {$\Mod{\monoid{T}}{\cat{C}}$};
      \node (BL) at (0,-1.5) {${\cat{C}}$};
      \node (BR) at (5,-1.5) {${\cat{C}}$};
      \draw[pto] (TL) to node[auto,labelsize](T) {$H$} (TR);
      \draw[->]  (TR) to node[auto,labelsize] {$K'$} (MR);
      \draw[->]  (MR) to node[auto,labelsize] {$U$} (BR);
      \draw[->]  (ML) to node[auto,swap,labelsize] {$U$} (BL);
      \draw[->]  (TL) to node[auto,swap,labelsize] {$K$} (ML);
      \draw[pto] (ML) to node[auto,swap,labelsize](B) 
      {$\Mod{\monoid{T}}{\cat{C}}(-,-)$} (MR);
      \draw[pto] (BL) to node[auto,swap,labelsize]{${\cat{C}}(-,-)$} (BR);
      \tzsquare{2.5}{0.75}{$\sigma$}
      \tzsquare{2.5}{-0.75}{$I_{U}$}
\end{tikzpicture}  
\]
holds, where $K$ and $K'$ are the functors corresponding to $(\cat{A},V,v)$ and 
$(\cat{A'},V',v')$ respectively.
\end{enumerate}
\end{theorem}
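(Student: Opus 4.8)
The plan is to prove all three clauses by translating the pasting diagrams in $\PPROF^\op$ into elementwise statements via the Yoneda lemma, after which each becomes an immediate reformulation of the definitions of model and of model homomorphism (Definition~\ref{def:metamodel_model}). First I would record the basic dictionary: by the Yoneda lemma a square $v$ as in Definition~\ref{def:double_cone}, with components $v_{A,A'}\colon\cat{A}(A,A')\to\Phi_T(VA,VA')$, carries the same information as the family $\overline{v}_A = v_{A,A}(\id{A})\in\Phi_T(VA,VA)$ ($A\in\cat{A}$), subject to naturality in $A$; likewise $u$ corresponds to the family $\overline{u}_{(C,\xi)} = \xi$. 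Under this translation the two equations defining a vertical double cone become exactly $(\Phi_e)_{VA,VA}(\overline{v}_A) = (\overline{\phi}_\cdot)_{VA}(\ast)$ and $(\Phi_m)_{VA,VA}(\overline{v}_A) = (\overline{\phi}_{T,T})_{VA,VA,VA}(\overline{v}_A,\overline{v}_A)$, so that a vertical double cone over $\Phi(\monoid{T})$ on $\cat{A}$ is precisely a functor $V\colon\cat{A}\to\cat{C}$ together with a choice, for each $A$, of a structure of model of $\monoid{T}$ on $VA$, such that $Vf$ is a homomorphism for every morphism $f$ of $\cat{A}$ (the homomorphism condition being the elementwise reading of naturality of $v$). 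Clause~1 is then the statement that $\overline{u}_{(C,\xi)} = \xi$ indeed assembles into such a cone, which is immediate from the definition of $\Mod{\monoid{T}}{\cat{C}}$ together with the equalities $\Phi_T(C,f)(\xi) = \Phi_T(f,C')(\xi')$.

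For clause~2, given a vertical double cone $(\cat{A},V,v)$ I would define $K$ on objects by $KA = (VA,\overline{v}_A)$ — a model by the paragraph above — and on morphisms by $Kf = Vf$, which lands in $\Mod{\monoid{T}}{\cat{C}}$ by the homomorphism condition. Then $U\circ K = V$ on the nose, and the pasting equation relating $v$ to $u$ and $I_K$ reduces, after Yoneda, to $\overline{v}_A = \overline{u}_{KA}$, true by construction. For uniqueness, any $K'$ with $UK' = V$ sends $A$ to a model $(VA,\xi_A)$ and $f$ to $Vf$, and the pasting condition forces $\xi_A = \overline{u}_{K'A} = \overline{v}_A$, whence $K' = K$; here one uses faithfulness of $U$, which holds since $U$ is a faithful amnestic isofibration as observed in Section~\ref{sec:morphism_metatheory}.

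For clause~3, I would first unwind the compatibility condition~\eqref{eqn:H_theta}. Writing $\theta$ elementwise as a natural family $\theta_{A,A'}\colon H(A,A')\to\cat{C}(VA,V'A')$ and absorbing the unitality and associativity isomorphisms of horizontal composition in $\PPROF^\op$ (which, evaluated on an element of $H(A,A')$ paired with an identity, only reassociate data and Yoneda-reduce the relevant coends), condition~\eqref{eqn:H_theta} says precisely that for each $h\in H(A,A')$ the morphism $\theta_{A,A'}(h)\colon VA\to V'A'$ satisfies $\Phi_T(VA,\theta_{A,A'}(h))(\overline{v}_A) = \Phi_T(\theta_{A,A'}(h),V'A')(\overline{v'}_{A'})$, i.e.\ that $\theta_{A,A'}(h)$ is a homomorphism $KA=(VA,\overline{v}_A)\to(V'A',\overline{v'}_{A'})=K'A'$. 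Hence $\theta$ corestricts, uniquely and naturally, along the (hom-set-injective) inclusion $\Mod{\monoid{T}}{\cat{C}}(KA,K'A')\hookrightarrow\cat{C}(VA,V'A')$ to the required square $\sigma$, the equation $\theta = I_U\ast\sigma$ holding because $U$ acts as this inclusion on hom-sets, and uniqueness of $\sigma$ being once more faithfulness of $U$. I would close by remarking that this universal property is exactly a double limit in $\PPROF^\op$ of the diagram $\Phi(\monoid{T})$ in the sense of Grandis--Par\'e, giving Corollary~\ref{cor:Mod_as_dbl_lim}.

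The main obstacle I anticipate is organisational rather than conceptual: carefully fixing the conventions for $\PPROF^\op$ (the direction of profunctors, the precise form of squares, and above all the coherence isomorphisms for horizontal composition of profunctors) and checking that, when the pasting diagrams in the three clauses are evaluated on elements via the Yoneda lemma, these structural isomorphisms act trivially enough that each diagram collapses to the elementary equation stated above. Once that bookkeeping is settled, every clause is a one-line consequence of Definition~\ref{def:metamodel_model} and the faithfulness of the forgetful functor $U$.
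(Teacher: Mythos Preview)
Your proposal is correct and follows essentially the same approach as the paper: both arguments translate the double-categorical pasting diagrams into elementwise statements via Yoneda, identify a vertical double cone with a functorial choice of model structures $\overline v_A=v_{A,A}(\id{A})$ on the $VA$, define $K$ by $KA=(VA,\overline v_A)$ and $Kf=Vf$, and for clause~3 read condition~\eqref{eqn:H_theta} as saying each $\theta_{A,A'}(h)$ is a model homomorphism so that $\sigma$ is the corestriction of $\theta$. The only differences are expository: you spell out the Yoneda dictionary and the role of faithfulness of $U$ more explicitly than the paper does, and your closing remark about Corollary~\ref{cor:Mod_as_dbl_lim} is treated in the paper as a separate statement rather than part of this proof.
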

The above statements are taken from the definition of double limit
\cite[Section~4.2]{GP1}.
\begin{proof}[of Theorem~\ref{thm:double_categorical_univ_property}]
First, that $(\Mod{\monoid{T}}{\cat{C}},U,u)$ is a vertical double cone over 
$\Phi(\monoid{T})$ follows directly from the definition of model of $\monoid{T}$
in $\cat{C}$ with respect to $\Phi$ (Definition~\ref{def:metamodel_model}).

Given a vertical double cone $(\cat{A},V,v)$ over $\Phi(\monoid{T})$,
for each object $A\in\cat{A}$, the pair $(VA,v_{A,A}(\id{A}))$ is a
$\monoid{T}$-model in $\cat{C}$ with respect to $\Phi$,
and for each morphism $f\colon A\longrightarrow A'$ in $\cat{A}$,
the morphism $Vf$ is a $\monoid{T}$-model homomorphism from 
$(VA,v_{A,A}(\id{A}))$ to $(VA',v_{A',A'}(\id{A'}))$.
The functor $K\colon \cat{A}\longrightarrow \Mod{\monoid{T}}{\cat{C}}$
can therefore be given as $KA=(VA,v_{A,A}(\id{A}))$ and $Kf=Vf$.
The uniqueness is clear. 

Finally, given $H$ and $\theta$ as in the third clause, the equation 
(\ref{eqn:H_theta}) says that for each $A\in\cat{A}$, $A'\in\cat{A'}$ 
and $x\in H(A,A')$, the 
morphism $\theta_{A,A'}(x)\colon VA\longrightarrow V'A'$
in $\cat{C}$ satisfies $\Phi_T(VA,\theta_{A,A'}(x)) (v_{A,A}(\id{A}))= 
\Phi_T(\theta_{A,A'}(x),VA')(v'_{A',A'}(\id{A'}))$;
in other words, that $\theta_{A,A'}(x)$ is a $\monoid{T}$-model homomorphism
from $KA$ to $K'A'$.
The square $\sigma$ can then be given as the natural transformation 
with $\sigma_{A,A'}(x)=\theta_{A,A'}(x)$.
\end{proof}

\subsection{Relation to double limits}
We conclude this paper by a brief sketch of how the double categorical 
universal property (Theorem~\ref{thm:double_categorical_univ_property}) of 
categories of models in our framework can be expressed 
via the notion of double limit \cite{GP1}, thus connecting our characterisation 
to a well-established notion.
An outline of this reduction is as follows.
\begin{enumerate}
\item A theory $\monoid{T}$ in a metatheory $\cat{M}$ may be equivalently given 
as a strong monoidal functor $\monoid{T}\colon\Simp\longrightarrow\cat{M}$,
where $\Simp$ is the augmented simplex category 
(cf.~Section~\ref{sec:morphism_metatheory}).
\item A metamodel $\Phi$ of a metatheory $\cat{M}$
may be identified with a lax double functor 
$\Phi\colon\dcat{H}\Sigma(\cat{M}^\op)\longrightarrow\PPROF^\op$,
where $\Sigma$ turns a monoidal category to the corresponding one-object 
bicategory and $\dcat{H}$ turns a bicategory to the corresponding vertically 
discrete pseudo double category (see Example~\ref{ex:H_construction}).
\item Therefore given a theory $\monoid{T}$ and a metamodel $\Phi$ (in 
$\cat{C}$) of a metatheory $\cat{M}$, we obtain a lax double functor 
$\Phi(\monoid{T})\colon \dcat{H}\Sigma(\Simp^\op)\longrightarrow\PPROF^\op$
as the following composition:
\[
\begin{tikzpicture}[baseline=-\the\dimexpr\fontdimen22\textfont2\relax ]
      \node (L) at (0,0) {$\dcat{H}\Sigma(\Simp^\op)$};
      \node (M) at (3,0) {$\dcat{H}\Sigma(\cat{M}^\op)$};
      \node (R) at (6,0) {$\PPROF^\op$.};
      \draw[->] (L) to node[auto,labelsize]{$\dcat{H}\Sigma(\monoid{T}^\op)$} 
      (M);
      \draw[->] (M) to node[auto,labelsize] {$\Phi$} (R);
\end{tikzpicture}  
\]
Theorem~\ref{thm:double_categorical_univ_property} may then be interpreted as 
establishing that $\Mod{\monoid{T}}{\cat{C}}$ is (the apex of) the double limit 
of $\Phi(\monoid{T})$ in the sense of \cite{GP1}.
\end{enumerate}
We refer the reader to \cite[Section~5.4]{Fujii_thesis}
for a somewhat more detailed sketch.

\begin{corollary}
\label{cor:Mod_as_dbl_lim}
Let $\cat{M}$ be a metatheory, $\monoid{T}$ be a theory in $\cat{M}$,
$\cat{C}$ be a large category and $\Phi$ be a metamodel of $\cat{M}$ in 
$\cat{C}$.
The category $\Mod{\monoid{T}}{(\cat{C},\Phi)}$ of models of $\monoid{T}$
in $\cat{C}$ with respect to $\Phi$is the apex of 
the double limit of the lax double functor $\Phi(\monoid{T})$.
\end{corollary}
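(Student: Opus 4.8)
The plan is to read the corollary as the formal counterpart of the three-step translation sketched just above it, and to prove it by combining Theorem~\ref{thm:double_categorical_univ_property} with a verification that the notion of \emph{universal vertical double cone over $\Phi(\monoid{T})$} used there coincides, on the nose, with the notion of apex of the double limit of the lax double functor $\Phi(\monoid{T})\colon\dcat{H}\Sigma(\Simp^\op)\longrightarrow\PPROF^\op$ in the sense of \cite[Section~4.2]{GP1}. So the only real content is to set up the correct dictionary between the cone data of Definition~\ref{def:double_cone} (together with the three clauses of Theorem~\ref{thm:double_categorical_univ_property}) and the cone data and universal property that \cite{GP1} attach to a diagram indexed by the vertically discrete, one-object pseudo double category $\dcat{H}\Sigma(\Simp^\op)$.

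First I would make $\Phi(\monoid{T})$ precise. As recalled in Section~\ref{sec:morphism_metatheory}, a theory $\monoid{T}$ in $\cat{M}$ amounts to a strong monoidal functor $\monoid{T}\colon\Simp\longrightarrow\cat{M}$ carrying the universal theory $\monoid{U}=(\ord{1},!_\ord{0},!_\ord{2})$ to $\monoid{T}$; applying $(-)^\op$ and then $\dcat{H}\Sigma(-)$ gives a morphism $\dcat{H}\Sigma(\Simp^\op)\longrightarrow\dcat{H}\Sigma(\cat{M}^\op)$ of pseudo double categories. On the other side, unwinding Definition~\ref{def:metamodel}, a metamodel $(\Phi,\phi_\cdot,\phi)$ of $\cat{M}$ in $\cat{C}$ is exactly a lax monoidal functor $\cat{M}^\op\longrightarrow([\cat{C}^\op\times\cat{C},\SET],\cat{C}(-,-),\ptensorrev)=\PROF^\op(\cat{C},\cat{C})$, hence a lax functor $\Sigma(\cat{M}^\op)\longrightarrow\PROF^\op$ sending the unique object to $\cat{C}$, hence (using $\PROF^\op=\tcat{H}\PPROF^\op$, Example~\ref{ex:H_construction}, and vertical discreteness of $\dcat{H}\Sigma(\cat{M}^\op)$) a lax double functor $\Phi\colon\dcat{H}\Sigma(\cat{M}^\op)\longrightarrow\PPROF^\op$. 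Composing yields $\Phi(\monoid{T})\colon\dcat{H}\Sigma(\Simp^\op)\longrightarrow\PPROF^\op$, sending the unique object to $\cat{C}$, the horizontal morphism $\ord{1}$ to the profunctor $\Phi_T$, the horizontal unit $\ord{0}$ to $\Phi_I$ (with unit comparison $\phi_\cdot\colon\cat{C}(-,-)\Rightarrow\Phi_I$), and the generating squares induced by $!_\ord{0}$ and $!_\ord{2}$ to $\Phi_e\colon\Phi_T\Rightarrow\Phi_I$ and $\Phi_m\colon\Phi_{T\otimes T}\Rightarrow\Phi_T$; the laxity cells of $\Phi(\monoid{T})$ on horizontal composites are the components of $\phi$.

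Next I would identify the cone data. A vertical double cone over $\Phi(\monoid{T})$ in the sense of \cite{GP1} consists of an apex category $\cat{A}$, a single vertex leg $V\colon\cat{A}\longrightarrow\cat{C}$ (since the indexing diagram has one object), and a square in $\PPROF^\op$ over each horizontal morphism of $\dcat{H}\Sigma(\Simp^\op)$, subject to compatibility with horizontal identities, horizontal composition, and the diagram squares. Because every $\ord{n}$ is the $n$-fold horizontal composite of $\ord{1}$, and because $\dcat{H}\Sigma(\Simp^\op)$ is generated under horizontal composition by $\ord{1}$ together with the squares $!_\ord{0}$ and $!_\ord{2}$, all of this is determined by the single square $v$ over $\ord{1}$ as displayed in Definition~\ref{def:double_cone}, and the compatibilities with $!_\ord{0}$ and $!_\ord{2}$ become precisely the two equations there (the coherence isomorphisms of $\PPROF^\op$ accounting for the ``$\cong$'' labels). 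Dually, the 2-dimensional cone data and the compatibility condition (\ref{eqn:H_theta}) match the vertical transformations of cones of \cite{GP1} verbatim. With this dictionary, clauses 1, 2 and 3 of Theorem~\ref{thm:double_categorical_univ_property} are exactly the statement that $(\Mod{\monoid{T}}{\cat{C}},U,u)$ is a cone, the 1-dimensional universal property, and the 2-dimensional universal property of the double limit, which is what ``apex of the double limit of $\Phi(\monoid{T})$'' means. The main obstacle is purely bookkeeping: carefully checking that the general cone axioms of \cite[Section~4.2]{GP1} for this vertically discrete one-object diagram collapse, via the observations above, to the finite list of equations of Definition~\ref{def:double_cone}; I expect no conceptual difficulty, and would refer the reader to \cite[Section~5.4]{Fujii_thesis} for the spelled-out details.
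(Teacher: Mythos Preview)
Your proposal is correct and follows essentially the same approach as the paper: the paper does not give a separate proof of the corollary but treats it as the formal restatement of Theorem~\ref{thm:double_categorical_univ_property} via the three-step translation sketched immediately before it, referring to \cite[Section~5.4]{Fujii_thesis} for details, and your argument spells out precisely that translation. (One minor slip: since $m\colon T\otimes T\to T$ in $\cat{M}$ and $\Phi$ is contravariant in its $\cat{M}$-variable, the square $\Phi_m$ goes $\Phi_T\Rightarrow\Phi_{T\otimes T}$, not the other way; this does not affect the argument.)
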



\end{document}